\numberwithin{equation}{section}
\newcommand\R{{\mathbb R}}
\renewcommand\min{{\operatorname{min}}}
\DeclareMathOperator{\supp}{supp}
\DeclareMathOperator{\kernel}{Ker}
\DeclareMathOperator{\range}{Ran}
\DeclareMathOperator{\Dim}{Dim}
\DeclareMathOperator{\trace}{trace}
\DeclareMathOperator{\spanning}{span}
\newcommand\bR{{\mathbb R}}
\newcommand{\bP}{{\mathbf P}}
\newcommand{\bQ}{{\mathbf Q}}
\newcommand{\bpi}{{\boldsymbol{\pi}}}
\newcommand{\bS}{{\mathbf S}}
\newcommand{\bw}{{\mathbf w}}
\newcommand{\bv}{{\mathbf v}}
\newcommand{\bof}{{\mathbf f}}
\newcommand{\bdel}{{\boldsymbol{\delta}}}
\newcommand{\bphi}{{\boldsymbol{\phi}}}
\newcommand{\cH}{{\mathcal H}}
\newcommand{\cP}{{\mathcal P}}
\newcommand{\rar}{\rightarrow}
\newcommand{\ip}[2]{\left\langle#1,#2\right\rangle}
\newcommand{\abs}[1]{\left\vert#1\right\vert}
\newcommand{\absip}[2]{\left\vert\langle#1,#2\rangle\right\vert}
\theoremstyle{plain}
  \newtheorem{theorem}{Theorem}[section]
  \newtheorem{proposition}[theorem]{Proposition}
  \newtheorem{lemma}[theorem]{Lemma}
  \newtheorem{corollary}[theorem]{Corollary}
  \theoremstyle{remark}
\theoremstyle{definition}
  \newtheorem{definition}[theorem]{Definition}
  \newtheorem{example}[theorem]{Example}
\begin{document}
\include{psfig}
\title[A Land of Oblique Duality for Frames]{A Land of Oblique Duality for Frames and Probabilistic Frames}

\keywords{oblique dual frame; oblique dual probabilistic frame; probabilistic consistent reconstruction; oblique dual probabilistic frame potential; optimal transport;}
\subjclass[2020]{42C15}	

\author{Dongwei Chen}
\address{Department of Mathematics, Colorado State University, Fort Collins, CO, USA, 80523}
\email{dongwei.chen@colostate.edu}

\author{Emily J. King}
\address{Department of Mathematics, Colorado State University, Fort Collins, CO, USA, 80523}
\email{emily.king@colostate.edu}

\author{Clayton Shonkwiler}
\address{Department of Mathematics, Colorado State University, Fort Collins, CO, USA, 80523}
\email{clayton.shonkwiler@colostate.edu}
 
\begin{abstract}
Functions or distributions used to sample and to reconstruct signals often occur in different domains, like the Dirac delta and a band-limited bump function in classical sampling. Oblique dual frames generalize this phenomenon.
In this paper, we provide new tools to study oblique dual frames and introduce a probabilistic variant of oblique dual frames. 
We first present the oblique dual frame potential and show that it is 
minimized precisely when the oblique dual coincides with the canonical 
oblique dual. We then define oblique dual probabilistic frames and 
oblique approximately dual probabilistic frames. In particular, we prove 
that for a given oblique dual probabilistic frame, the associated oblique 
dual probabilistic frame potential is minimized if and only if the frame 
is tight and the oblique dual is canonical. Moreover, the tightness 
assumption can be removed when the minimization is restricted to oblique 
dual probabilistic frames of pushforward type. Finally, we investigate 
perturbations of oblique dual probabilistic frames and show that if a 
probability measure is sufficiently close to an oblique dual 
probabilistic frame pair in the $2$-Wasserstein topology, then it forms an 
oblique approximately dual probabilistic frame.
\end{abstract}

\maketitle

\section{Introduction}
Suppose $\mathcal{H}$ is a separable Hilbert space and $W \subset \mathcal{H}$ is a closed subspace. An at-most countable sequence $\{\mathbf{w}_i\}_{i \in I} \subset W$ is called a \emph{frame} for $W$ if there exist constants $0<A \leq B $ such that for any $\mathbf{f} \in W$, 
\begin{equation}\label{eq:frame}
    A \|\mathbf{f}\|^2 \leq \sum_{i\in I} |\langle \mathbf{w}_i, \mathbf{f} \rangle|^2 \leq B \|\mathbf{f}\|^2.
\end{equation}
A frame $\{\mathbf{w}_i\}_{i \in I}$ is called a \emph{tight frame} for $W$ if we may choose $A=B$ and \emph{Parseval} if $A=B=1$. If the upper (but possibly not the lower) bound in~\eqref{eq:frame} holds, $\{\mathbf{w}_i\}_{i \in I}$ is called a \emph{Bessel sequence} for $W$.

Given any frame $\{\mathbf{w}_i\}_{i\in I}$ for $W$, one can use \emph{dual frames} to reconstruct vectors from $W$: There exists another frame $\{\mathbf{v}_i\}_{i\in I}$ in $W$ such that for any $\mathbf{f} \in W$, 
\begin{equation}\label{eq:dualframe}
    \mathbf{f} = \sum_{i\in I} \langle \mathbf{f}, \mathbf{w}_i \rangle \mathbf{v}_i = \overbrace{\sum_{i\in I} \underbrace{\langle \mathbf{f}, \mathbf{v}_i \rangle}_{\text{sampling}} \mathbf{w}_i}^{\text{reconstruction}}.
\end{equation}
In particular, one can use the \emph{canonical dual frame} $ \{\mathbf{S}^\dagger\mathbf{w}_i\}_{i\in I}$ where  $\mathbf{S}^\dagger$ is the Moore–Penrose inverse of the frame operator $\mathbf{S}: \mathcal{H} \rightarrow \mathcal{H}$ for $\{\mathbf{w}_i\}_{i\in I}$ given by $ \mathbf{S} \mathbf{f} = \sum_{i\in I} \langle \mathbf{f}, \mathbf{w}_i \rangle \mathbf{w}_i$, for any $\mathbf{f} \in \mathcal{H}$. 

Frames were first introduced by Duffin and Schaeffer to analyze perturbations
of Fourier series ~\cite{duffin1952class} and have been widely used in signal processing ~\cite{daubechies1992ten, grochenig2001foundations,  strohmer2003grassmannian, kutyniok2012multiscale, naidu2020construction}, where signals are modeled as vectors in a Hilbert space $\mathcal{H}$, and signal measurement or sampling is defined as taking the inner product with frame vectors, and signal reconstruction is understood as the weighted sum of samples with dual (or tight) frame vectors \cite{eldar2005general}. 
Most of the current frame literature also calls signal sampling and reconstruction signal analysis and synthesis, respectively. When $W$ is finite-dimensional, the number of vectors in a frame for $W$ is necessarily finite, which we call a \emph{finite frame}. 
Interested readers can refer to \cite{christensen2016introduction, waldron2018introduction} for more details on frames and finite frames.

As seen in \cref{eq:dualframe}, the sampling and reconstruction procedures of dual frames occur in the same subspace $W$, because $\{\mathbf{w}_i\}_{i \in I}$ and $\{\mathbf{v}_i\}_{i \in I}$ are both contained in $W$. However, in practice, the sampling and reconstruction can be performed across distinct subspaces.

\begin{example}[{\cite[Example 2.1]{li1998theory}} and {\cite[Example 1]{li2004pseudo}}]\label{example:paley}
Define the signal space $PW_{1/4}$ to be the space of all band-limited signals of bandwidth within $\left[-1/4, 1/4\right)$. 
By the Whittaker–Nyquist–Shannon–Someya sampling theorem, there exists a function 
$\bphi \in L^{2}(\mathbb{R})$ such that its Fourier transform $\hat{\bphi}$ satisfies
\[
\hat{\bphi}(\gamma) = 
\begin{cases}
1, & -\tfrac{1}{4} \leq \gamma < \tfrac{1}{4}, \\[6pt]
\text{decaying continuously to zero}, & \tfrac{1}{4} \leq |\gamma| < \tfrac{1}{2}, \\[6pt]
0, & |\gamma| \geq \tfrac{1}{2},
\end{cases}
\]
and for $T = 1$ (which satisfies the Nyquist rate $2T \cdot 1/4 < 1$), we have
\[
\bof(t) = \sum_{n=-\infty}^\infty \bof(n)\,\bphi(t-n) = \overbrace{\sum_{n=-\infty}^\infty \underbrace{ \langle \bof, \bdel_n \rangle}_{\text{sampling}} \bphi(t-n)}^{\text{reconstruction}}, \quad \textrm{for all} \quad \bof \in PW_{1/4}.
\]
Note that the sampling space is $\overline{\operatorname{span}}\{\bdel_n:n\in\mathbb{Z}\}$, which is a subspace of the tempered distribution space. However, the reconstruction space is $\overline{\operatorname{span}}\{\bphi(\cdot-n):n\in\mathbb{Z}\}\subset L^2(\mathbb{R})$ rather than $PW_{1/4}$, because   $\bphi \notin PW_{1/4}$ and $\{\bphi(\cdot-n)\}_{n \in \mathbb{Z}}$ is not a frame for $PW_{1/4}$. Thus, the sampling vectors belong to a distributional subspace, while the reconstruction vectors lie in $L^{2}(\mathbb{R})$. 
\end{example}
Further examples of sampling and reconstructing in different spaces include reconstructing after irregular sampling~\cite{aldroudi2002nonuniform,feichtinger1992iterative}, correcting for sensor issues~\cite{unser2002general}, 
using a frame-like construction with certain ``niceness'' (e.g., smoothness) of the frame-like vectors not otherwise possible~\cite{li1998theory,christensen2004oblique}, and decomposing Besov spaces~\cite{frazier1985decomp}. These and other considerations led to the development of \emph{oblique duals}~\cite{eldar2001nonredundant,eldar2003sampling,christensen2004oblique,eldar2005general, eldar2006characterization} and the related \emph{pseudoframes for subspaces (PFFS)}~\cite{li1998theory,li1998pseudo,li2004pseudo}. Subsequent developments of oblique dual frames appear in, e.g., \cite{heineken2018oblique, diaz2023approximate,li2024making,li2025approximate}.

For a different perspective, consider a frame $\{\mathbf{w}_i\}_{i \in I}$ for $W$ with dual frame $\{\mathbf{v}_i\}_{i \in I}$ in $W$.  Then, it follows from \cref{eq:dualframe} that the map $\bP_W: \cH \rar \cH$ defined by 
\begin{equation}\label{eq:orthproj}
 \bP_W \mathbf{f} = \sum_{i\in I} \langle \mathbf{f}, \mathbf{w}_i \rangle \mathbf{v}_i
\end{equation}
satisfies $\bP_W^2 = \bP_W$ with $\range(\bP_W) = W$ and $W^\perp \subseteq \kernel(\bP_W)$.  In particular, $\bP_W$ is the orthogonal projection onto $W$.  A natural question to ask is whether one can define oblique projections with a formula similar to \cref{eq:orthproj}.

The mathematical setting for oblique dual frames is as follows. Let $\cH$ be a separable Hilbert space and $I$ an at-most countable index set. Suppose $W$ and $V$ are closed subspaces of $\mathcal{H}$ such that $\mathcal{H} = W \oplus V^\perp$ where $V^\perp$ is the orthogonal complement of $V$. Let $\{\mathbf{w}_i\}_{i \in I} \subset W$ and $\{\mathbf{v}_i\}_{i \in I} \subset V$ be Bessel sequences for $\mathcal{H}$ and further assume that  $\{\mathbf{w}_i\}_{i \in I}$ and $\{\mathbf{v}_i\}_{i \in I}$ are frames for $W$ and $V$, respectively. 
Then $\{\mathbf{v}_i\}_{i \in I}$ is an \emph{oblique dual frame} of $\{\mathbf{w}_i\}_{i \in I}$ on $V$ if 
\begin{equation*}
   \bpi_{WV^\perp} \mathbf{f} = \sum_{i \in I} \langle \mathbf{f}, \mathbf{v}_i \rangle \mathbf{w}_i, \quad \text{for all} \quad \mathbf{f} \in \mathcal{H},
\end{equation*}
where the map 
\[
\bpi_{WV^\perp}: \cH \rar \cH, \quad \bof \mapsto \sum_{i \in I} \langle \mathbf{f}, \mathbf{v}_i \rangle \mathbf{w}_i
\]
is the oblique projection of $\cH$ onto $W$ along $V^\perp$.
Furthermore, $\{\mathbf{v}_i\}_{i \in I}$ are called the \emph{sampling} (or \emph{analysis}) \emph{vectors} and  $V$ the \emph{sampling space}. Similarly, $\{\mathbf{w}_i\}_{i \in I}$ are called the \emph{reconstruction} (or \emph{synthesis}) \emph{vectors} and $W$ the \emph{reconstruction space}. Note that when $V=W$, the definition of oblique dual frame reduces to the standard dual frame setting. When $\mathcal{H}$ is finite-dimensional, the frames are finite, and the index set $I$ is finite.

Our contributions in this paper focus on the study of oblique dual frame potentials and their probabilistic counterparts. The paper is organized as follows.   In \cref{section:preliminaries}, we give some background and preliminary results on oblique dual frames, probabilistic frames, and optimal transport, which serves as a key tool in the study of probabilistic frames. In \cref{section:dualptential}, we study the oblique dual frame potential where $\mathcal{H} = \mathbb{C}^n=W \oplus V^\perp$. In this setting, suppose $\{\mathbf{w}_i\}_{i=1}^N \subset W$ is a frame for $W$ with the frame operator $\mathbf{S} = \sum_{i=1}^N \mathbf{w}_i\mathbf{w}_i^*$, and $\{\mathbf{v}_i\}_{i=1}^N \subset V$ is an oblique dual frame of $\{\mathbf{w}_i\}_{i=1}^N$ on $V$. In \cref{lemma:oblique_dual_potential}, we show that 
    \begin{equation*}
       \sum_{i=1}^N \sum_{j=1}^N |\langle \mathbf{w}_i, \mathbf{v}_j \rangle |^2 \geq d_W,
    \end{equation*}
where $d_W$ is the dimension of $W$. Moreover, equality holds if and only if $\{\mathbf{v}_j\}_{j=1}^N $ is the canonical oblique dual $ \{\bpi_{VW^\perp} \mathbf{S}^{\dagger} \mathbf{w}_j\}_{j=1}^N$.   We further show that the mixed coherence between $\{\mathbf{w}_i\}_{i=1}^N$ and $\{\mathbf{v}_i\}_{i=1}^N$ satisfies
    \begin{equation*}
       \underset{i \neq j}{\text{max}} \ |\langle \mathbf{w}_i, \mathbf{v}_j \rangle |^2 \geq \frac{d_W(N-d_W)}{N^2(N-1)},
    \end{equation*}
where saturation requires the existence of an $(N,d_W)$-equiangular tight frame.

In \cref{section:probabilisticdual}, we introduce the notion of oblique dual probabilistic frames where $\mathcal{H} = \mathbb{R}^n$ such that $\mathbb{R}^n = W \oplus V^\perp$. Recall that $\mu \in  \mathcal{P}_2(W)$ is called a \emph{probabilistic frame} for $W$ if there exist $0<A \leq B < \infty$ such that for any ${\bf x} \in W$, 
\begin{equation*}
    A \Vert \mathbf{x}  \Vert^2 \leq\int_{W} \vert \left\langle \mathbf{x},\mathbf{y} \right\rangle \vert ^2 d\mu(\mathbf{y})  \leq B \Vert \mathbf{x}  \Vert^2.
\end{equation*}
Then we will say $\nu \in \mathcal{P}_2(V)$ is an \emph{oblique dual probabilistic frame} of $\mu$ on $V$ if there exists a transport coupling $\gamma \in \Gamma(\mu, \nu)$ such that
    \begin{equation*}
       \bpi_{WV^\perp} = \int_{W \times V} \mathbf{x} \mathbf{y}^t d\gamma(\mathbf{x}, \mathbf{y}),
    \end{equation*}
where $\bpi_{WV^\perp}$ is the oblique projection of $\mathbb{R}^n$ onto $W$ along $V^\perp$. 
Furthermore, given $\epsilon \geq 0$,
$\nu \in \mathcal{P}_2(V)$ is called an \textit{oblique $\epsilon$-approximately dual probabilistic frame} of $\mu$ on $V$ if there exists $\gamma \in \Gamma(\mu, \nu)$ such that
    \begin{equation*}
        \left \| \int_{W \times V} \mathbf{x} \mathbf{y}^t d\gamma(\mathbf{x}, \mathbf{y}) -\bpi_{WV^\perp}  \right \| \leq \epsilon.
    \end{equation*}
    
In \cref{section:probabilisticDualPotential}, we introduce the oblique dual probabilistic frame potential. In particular, in \cref{dualframepotenial2}, we show that if $\mu \in \mathcal{P}_2(W)$ is a probabilistic frame for $W$ with bounds $0<A \leq B$, and  $\nu \in \mathcal{P}_2(V)$ is an oblique dual of $\mu$ on $V$, then
\begin{equation*}
     \int_{W}  \int_{V} |\langle \mathbf{x}, \mathbf{y} \rangle |^2 d\mu(\mathbf{x}) d\nu(\mathbf{y}) \geq \frac{A}{B}d_W,
\end{equation*}
and equality holds if and only if $\mu$ is a tight probabilistic frame for $W$ with bound $A>0$ and $\nu $ is the canonical oblique dual $ ({\bpi_{VW^\perp}\mathbf{S}^{\dagger}_\mu})_\# \mu$. Furthermore, as shown in \cref{DualFramePotential}, the tightness assumption can be dropped if we minimize among oblique dual frames of pushforward type.

Finally, in \cref{section:approximateDual}, we study oblique $\epsilon$-approximately dual probabilistic frames. We show that if a probability measure is sufficiently close to an oblique dual pair in the 2-Wasserstein metric, then it constitutes an oblique $\epsilon$-approximately dual probabilistic frame. In particular, \cref{interiorPoint} shows that given a probabilistic frame, its oblique dual probabilistic frames are interior points in the set of oblique $\epsilon$-approximately dual frames in the $2$-Wasserstein topology.

\section{Preliminaries}\label{section:preliminaries}
Throughout the paper, $\mathcal{H}$ is a separable Hilbert space, and $W$ and $V$ are closed subspaces of $\mathcal{H}$ such that $\mathcal{H} = W \oplus V^\perp$, where $V^\perp$ is the orthogonal complement of $V$. In addition, 
$\bP_W$ is the orthogonal projection of $\mathcal{H}$ onto $W$, $\mathbf{0}$ the zero vector in $\mathcal{H}$, $\mathbf{0}_{n \times n}$ the zero matrix of size $n \times n$, and $\mathbf{Id}$ the identity matrix of size $n \times n$. We also use ${\bf x}^t$ to denote the transpose of a vector ${\bf x} \in \mathbb{R}^n$. 

\subsection{Oblique Projection, Oblique Dual Frame, and Consistent Reconstruction}

The following theorem gives some alternate characterizations of the direct sum assumption $\mathcal{H} = W \oplus V^{\perp}$ and
relies on the concept of the maximum angle $\theta_{WV} \in [0, \frac{\pi}{2}]$ between the subspaces $W$ and $V$ given by
\begin{equation*}
    \cos(\theta_{WV}) := \inf_{{\mathbf{f} \in W, \,\|\mathbf{f}\|=1}} \| \bP_V \mathbf{f} \|. 
\end{equation*}
\begin{theorem}[{\cite[Theorem 2.3]{tang2000oblique}}]\label{lem:directsumangle}
Suppose $W$ and $V$ are closed subspaces of a separable Hilbert space $\mathcal{H}$. Then the following are equivalent:
\begin{itemize}
  \item[$(1)$] $\mathcal{H} = W \oplus V^{\perp}$.
    \item[$(2)$] $\mathcal{H} = V \oplus W^{\perp}$.
    \item[$(3)$] $\cos(\theta_{VW}) > 0$ and $\cos(\theta_{WV}) > 0$.
    \item[$(4)$] There exist Riesz bases $\{\mathbf{w}_i\}_{i \in I}$ and $\{\mathbf{v}_i\}_{i \in I}$ for
    $W$ and $V$, respectively, such that $\{\mathbf{w}_i\}_{i \in I}$ is biorthogonal to $\{\mathbf{v}_i\}_{i \in I}$.
\end{itemize}
\end{theorem}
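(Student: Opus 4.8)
The plan is to treat $(1)$ as the hub and prove $(1)\Leftrightarrow(2)$, $(1)\Leftrightarrow(3)$, and $(1)\Leftrightarrow(4)$ separately, letting the oblique projection do the heavy lifting. The first thing I would record is that once $\cH=W\oplus V^{\perp}$ holds as a decomposition into \emph{closed} subspaces, the oblique projection $\bpi_{WV^\perp}$ onto $W$ along $V^{\perp}$ is automatically bounded: a closed-graph argument suffices, since if $\bof_n\to\bof$ and $\bpi_{WV^\perp}\bof_n\to\bg$, then $\bg\in W$, $\bof_n-\bpi_{WV^\perp}\bof_n\to\bof-\bg\in V^{\perp}$, and uniqueness of the decomposition forces $\bpi_{WV^\perp}\bof=\bg$. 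For $(1)\Rightarrow(2)$ I would then pass to adjoints: $P:=\bpi_{WV^\perp}$ is a bounded idempotent with $\range(P)=W$, $\kernel(P)=V^{\perp}$, so $P^{*}$ is a bounded idempotent with $\range(P^{*})=\kernel(P)^{\perp}=V$ and $\kernel(P^{*})=\range(P)^{\perp}=W^{\perp}$, i.e.\ $\cH=V\oplus W^{\perp}$; the converse is the same argument with $(W,V)$ interchanged, which in particular also shows $\bpi_{VW^\perp}$ is bounded.

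For $(1)\Rightarrow(3)$ I would use the identity $\dist(\bof,V^{\perp})=\norm{\bP_V\bof}$: given $\bof\in W$ and any $\bg\in V^{\perp}$, boundedness of $P$ gives $\norm{\bof}=\norm{P(\bof-\bg)}\le\norm{P}\,\norm{\bof-\bg}$, and infimizing over $\bg$ yields $\norm{\bP_V\bof}\ge\norm{\bof}/\norm{P}$, so $\cos\theta_{WV}\ge 1/\norm{P}>0$, and symmetrically $\cos\theta_{VW}>0$ via $\bpi_{VW^\perp}$. For $(3)\Rightarrow(1)$ I would argue in three steps: (i) $W\cap V^{\perp}=\{\mathbf 0\}$, since a nonzero vector there is annihilated by $\bP_V$, contradicting $\cos\theta_{WV}>0$; (ii) $W+V^{\perp}$ is dense, because $(W+V^{\perp})^{\perp}=W^{\perp}\cap V=\{\mathbf 0\}$ by the same reasoning applied to $\cos\theta_{VW}>0$; (iii) $W+V^{\perp}$ is closed — the one genuinely non-formal step: if $\bof_n\in W$, $\bg_n\in V^{\perp}$ with $\bof_n+\bg_n\to\bof$, then applying $\bP_V$ removes the $\bg_n$, so $\bP_V\bof_n\to\bP_V\bof$ is Cauchy, and since $\norm{\bof_n-\bof_m}\le\norm{\bP_V(\bof_n-\bof_m)}/\cos\theta_{WV}$ the sequence $(\bof_n)$ is Cauchy, converging to some $\bof_\infty\in W$, whence $\bg_n\to\bof-\bof_\infty\in V^{\perp}$. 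Together (i)--(iii) give $\cH=W\oplus V^{\perp}$.

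For $(1)\Rightarrow(4)$ I would take any orthonormal basis $\{\bw_i\}_{i\in I}$ of $W$ (hence a Riesz basis) and set $\bv_i:=\bpi_{VW^\perp}\bw_i$. Since $\cH=V\oplus W^{\perp}$, the restriction $\bpi_{VW^\perp}|_W\colon W\to V$ is a bounded bijection — injective because $\kernel(\bpi_{VW^\perp})\cap W=W^{\perp}\cap W=\{\mathbf 0\}$, and surjective because every $\bv\in V$ satisfies $\bpi_{VW^\perp}\bP_W\bv=\bpi_{VW^\perp}\bv=\bv$ — hence invertible, so $\{\bv_i\}_{i\in I}$ is a Riesz basis of $V$; biorthogonality is the one-line computation $\ip{\bw_i}{\bv_j}=\ip{\bw_i}{\bpi_{VW^\perp}\bw_j}=\ip{(\bpi_{VW^\perp})^{*}\bw_i}{\bw_j}=\ip{\bpi_{WV^\perp}\bw_i}{\bw_j}=\ip{\bw_i}{\bw_j}=\delta_{ij}$. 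Conversely, for $(4)\Rightarrow(1)$, given biorthogonal Riesz bases $\{\bw_i\},\{\bv_i\}$ I would first note each is Bessel for all of $\cH$ (since $\ip{\bof}{\bv_i}=\ip{\bP_V\bof}{\bv_i}$, and likewise for the $\bw_i$), so $P\bof:=\sum_{i\in I}\ip{\bof}{\bv_i}\bw_i$ is a well-defined bounded operator; biorthogonality forces $P^{2}=P$ and $P\bw_j=\bw_j$, giving $\range(P)=W$, while $P\bof=\mathbf 0$ exactly when $\bof\perp\bv_i$ for all $i$ (the synthesis operator of a Riesz basis is injective), i.e.\ $\bof\in V^{\perp}$, so $\kernel(P)=V^{\perp}$ and $\cH=W\oplus V^{\perp}$.

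The main obstacle is step (iii): deducing that $W+V^{\perp}$ is \emph{closed} from mere positivity of the angle $\theta_{WV}$. Everything else is either a soft functional-analytic fact (closed graph theorem, adjoints of idempotents) or a direct computation, and it is precisely the closedness that forces one to use the quantitative angle bound, through the fact that $\bP_V$ is bounded below on $W$. A secondary point needing care is the infinite-dimensional bookkeeping in $(4)\Rightarrow(1)$ — convergence of the series defining $P$ and the interchange of summations in $P^{2}=P$ — which is handled by the Bessel bounds of the two Riesz bases.
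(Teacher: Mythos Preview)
The paper does not prove this theorem; it is quoted as background from Tang~\cite{tang2000oblique} without argument, so there is no in-paper proof to compare against. Your proposal is a correct and self-contained proof: the closed-graph argument for boundedness of $\bpi_{WV^\perp}$, the adjoint-of-idempotent trick for $(1)\Leftrightarrow(2)$, the angle bound $\cos\theta_{WV}\ge 1/\norm{\bpi_{WV^\perp}}$, the three-step $(3)\Rightarrow(1)$ (with the key closedness step correctly extracted from the lower bound $\norm{\bP_V\bof}\ge\cos\theta_{WV}\,\norm{\bof}$ on $W$), and the Riesz-basis constructions in both directions of $(1)\Leftrightarrow(4)$ are all sound.
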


We then recall the definition of oblique projection, which can also be used to define oblique dual frames.  

\begin{definition}
    Suppose $\mathcal{H} = W \oplus V^\perp$. Then $\bpi_{WV^\perp}: \mathcal{H} \rightarrow W$ is called the \emph{oblique projection} of $\mathcal{H}$ onto $W$ along $V^\perp$ if for any $\mathbf{w} \in W$ and any $\mathbf{v} \in V^\perp$, 
    $$\bpi_{WV^\perp} \mathbf{w} = \mathbf{w} \ \text{and} \ \bpi_{WV^\perp} \mathbf{v} = \mathbf{0}.$$
\end{definition}

One can check that $\bpi_{WV^\perp}$ is the adjoint of $\bpi_{VW^\perp}$, i.e., $\bpi_{WV^\perp}= (\bpi_{VW^\perp})^*$. Note also that $\bpi_{WW^\perp} = \bP_W$, the orthogonal projection onto $W$. Moreover, the following lemma gives basic identities for oblique projections.
\begin{lemma}[{\cite[Lemma 2.1]{heineken2018oblique}}]\label{lemma:oblique_proj_identity}
  Let $W$ and $V$ be closed subspaces of $\mathcal{H}$ so that $\mathcal{H} = W \oplus V^\perp$. Then $\bpi_{VW^\perp} \bP_{W} = \bpi_{VW^\perp}$ and $\bP_{W} \bpi_{VW^\perp} = \bP_{W}$. 
\end{lemma}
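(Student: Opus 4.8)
The plan is to read off both identities directly from the defining properties of the oblique projection, using the two complementary splittings of $\cH$ recorded in \cref{lem:directsumangle}: the orthogonal splitting $\cH = W \oplus W^\perp$ and the oblique splitting $\cH = V \oplus W^\perp$ (item $(2)$ of that theorem, which is also exactly what guarantees that $\bpi_{VW^\perp}$ is a well-defined linear map). The unifying observation is that $\bpi_{VW^\perp}$ and $\bP_W$ are both idempotents with the \emph{same} kernel, namely $W^\perp$: indeed $\bpi_{VW^\perp}$ is by definition the projection \emph{along} $W^\perp$, so $\kernel(\bpi_{VW^\perp}) = W^\perp$, while $\kernel(\bP_W) = W^\perp$ as well. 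Each of the two claimed equalities then just says that composing these two projections in one order or the other leaves the ``outer'' one unchanged, and this is visible from a one-line decomposition of an arbitrary vector.

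For the first identity, fix $\mathbf{f} \in \cH$ and write the orthogonal decomposition $\mathbf{f} = \bP_W\mathbf{f} + \mathbf{g}$ with $\mathbf{g} := \mathbf{f} - \bP_W\mathbf{f} \in W^\perp$. Since $\bpi_{VW^\perp}$ is linear and annihilates $W^\perp$, we get $\bpi_{VW^\perp}\mathbf{f} = \bpi_{VW^\perp}\bP_W\mathbf{f} + \bpi_{VW^\perp}\mathbf{g} = \bpi_{VW^\perp}\bP_W\mathbf{f}$. As $\mathbf{f}$ was arbitrary, $\bpi_{VW^\perp}\bP_W = \bpi_{VW^\perp}$.

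For the second identity, fix $\mathbf{f} \in \cH$ and instead use the splitting induced by $\bpi_{VW^\perp}$ itself: $\mathbf{f} = \bpi_{VW^\perp}\mathbf{f} + \mathbf{h}$, where $\bpi_{VW^\perp}\mathbf{f} \in V$ and $\mathbf{h} := \mathbf{f} - \bpi_{VW^\perp}\mathbf{f} = (\mathbf{Id} - \bpi_{VW^\perp})\mathbf{f}$ lies in $\range(\mathbf{Id} - \bpi_{VW^\perp}) = \kernel(\bpi_{VW^\perp}) = W^\perp$. Applying the orthogonal projection $\bP_W$, which kills $W^\perp$, gives $\bP_W\mathbf{f} = \bP_W\bpi_{VW^\perp}\mathbf{f} + \bP_W\mathbf{h} = \bP_W\bpi_{VW^\perp}\mathbf{f}$, so $\bP_W\bpi_{VW^\perp} = \bP_W$. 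There is essentially no obstacle here: the whole content is that the two projections share the kernel $W^\perp$. The only point that is not completely formal is the (implicit) appeal to \cref{lem:directsumangle} to know that $\cH = V \oplus W^\perp$, so that $\bpi_{VW^\perp}$ is defined at all; boundedness of $\bpi_{VW^\perp}$, which follows from $\cos(\theta_{WV}) > 0$, is not even needed for these algebraic identities.
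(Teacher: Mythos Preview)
Your proof is correct and is essentially the same argument as the paper's: both verify the identities by splitting an arbitrary vector (or equivalently, the identity operator) along $W \oplus W^\perp$ for the first identity and along $V \oplus W^\perp$ for the second, then using that each projection annihilates the $W^\perp$ component. The paper writes this as the operator identities $\bpi_{VW^\perp}(\bP_W + \bP_{W^\perp}) = \bpi_{VW^\perp}$ and $\bP_W(\bpi_{VW^\perp} + \bpi_{W^\perp V}) = \bP_W$, which is just your pointwise decomposition expressed compactly.
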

\begin{proof}
    Since $W^\perp \subset \kernel (\bpi_{VW^\perp})$, we see that 
    \[
        \bpi_{VW^\perp} \bP_{W} = \bpi_{VW^\perp} (\bP_{W}+\bP_{W^\perp})= \bpi_{VW^\perp}.
    \]
    Similarly, 
    \[
        \bP_{W} \bpi_{VW^\perp} =\bP_{W} (\bpi_{VW^\perp}+ \bpi_{W^\perp V}) = \bP_{W}.
    \]
\end{proof}

Note that if $\{\mathbf{v}_i\}_{i\in I}$ is an oblique dual frame of $\{\mathbf{w}_i\}_{i\in I}$ on $V$, then $\{\mathbf{w}_i\}_{i\in I}$ and $\{\bP_W \mathbf{v}_i\}_{i\in I}$ are dual frames for $W$; similarly,
$\{\mathbf{v}_i\}_{i\in I}$ and $\{\bP_V \mathbf{w}_i\}_{i\in I}$ are dual frames for $V$ \cite{christensen2004oblique}.
We are now ready to provide a few equivalent rigorous definitions of oblique dual frames.

\begin{definition}[{\cite[Lemma 3.1]{christensen2004oblique}}]\label{def:obliquedual}
Suppose $\{\mathbf{w}_i\}_{i\in I}$ and $\{\mathbf{v}_i\}_{i\in I}$ are frames for $W$ and $V$, respectively, where $\mathcal{H} = W \oplus V^\perp$.  Then $\{\mathbf{v}_i\}_{i\in I}$ is said to be an \emph{oblique dual frame} of $\{\mathbf{w}_i\}_{i\in I}$ on $V$ if one of the following equivalent conditions holds:

\begin{enumerate}
    \item \label{it:perfect reconstruction} $\mathbf{f} = \sum_{i\in I} \langle \mathbf{f}, \mathbf{v}_i \rangle \mathbf{w}_i, \ \text{for any } \mathbf{f} \in W$. 

    \item $\bpi_{WV^\perp} \mathbf{f} = \sum_{i\in I} \langle \mathbf{f}, \mathbf{v}_i \rangle \mathbf{w}_i, \ \text{for any } \mathbf{f} \in \mathcal{H}$. 

    \item $\bpi_{VW^\perp} \mathbf{f} = \sum_{i\in I} \langle \mathbf{f}, \mathbf{w}_i \rangle \mathbf{v}_i, \ \text{for any } \mathbf{f} \in \mathcal{H}$. 

     \item $\langle \bpi_{WV^\perp} \mathbf{f}, \mathbf{g} \rangle = \sum_{i\in I} \langle \mathbf{f}, \mathbf{v}_i \rangle \langle \mathbf{w}_i, \mathbf{g} \rangle, \ \text{for any } \mathbf{f}, \mathbf{g} \in \mathcal{H}$. 

    \item $\langle \bpi_{VW^\perp} \mathbf{f}, \mathbf{g} \rangle = \sum_{i\in I} \langle \mathbf{f}, \mathbf{w}_i \rangle \langle \mathbf{v}_i, \mathbf{g} \rangle, \ \text{for any } \mathbf{f}, \mathbf{g} \in \mathcal{H}$. 
\end{enumerate}
In this case, $\{\mathbf{v}_i\}_{i\in I}$ and $\{\mathbf{w}_i\}_{i\in I}$  are called an \emph{oblique dual pair}.
\end{definition}

The notion of consistent reconstruction, which was first introduced in \cite{unser2002general}, provides another motivation for studying oblique dual frames.

\begin{definition}\label{def:consistRecons}
 Let $\{\mathbf{w}_i\}_{i\in I}$ and $\{\mathbf{v}_i\}_{i\in I}$ be frames for $W$ and $V$, respectively,  where $W$ and $V$ are closed subspaces of $\cH$ and $W \cap V^\perp =\{\bf 0\}$. Then $\{\mathbf{w}_i\}_{i\in I}$ and $\{\mathbf{v}_i\}_{i\in I}$ are said to perform \emph{consistent reconstruction} if, for any $\mathbf{f} \in \mathcal{H}$, $\langle \mathbf{f}, \mathbf{v}_i \rangle = \langle \hat{\mathbf{f}}, \mathbf{v}_i \rangle$ for each $i$. Here, $\hat{\mathbf{f}} = \sum_{i \in I} \langle \mathbf{f}, \mathbf{v}_i \rangle \mathbf{w}_i$ is the signal reconstructed from $\mathbf{f}$. 
\end{definition}

The assumption that $W \cap V^\perp =\{0\}$ in \cref{def:consistRecons} is there, in part, to ensure unique reconstruction: that is, for any $\mathbf{f}, \mathbf{g} \in W$, if $\langle \mathbf{f}, \mathbf{v}_i\rangle = \langle \mathbf{g}, \mathbf{v}_i\rangle$ for all $i$,
then $\mathbf{f}=\mathbf{g}$. Indeed, if $W \cap V^\perp = \{\mathbf{0}\}$ and $\langle \mathbf{f}, \mathbf{v}_i \rangle = \langle \mathbf{g}, \mathbf{v}_i \rangle$ for all $i$, then $\mathbf{f-g} \in V^\perp$ since $\{\mathbf{v}_i\}_{i \in I}$ is a frame for $V$. Hence, $\mathbf{f-g} \in W \cap V^\perp = \{\mathbf{0}\}$ and thus $\mathbf{f} = \mathbf{g}$. Conversely, if there is some nonzero $\mathbf{h} \in W \cap V^\perp$, then for any $\mathbf{f} \in W$, the distinct vectors $\mathbf{f}$ and $\mathbf{g} := \mathbf{f}+\mathbf{h} \in W$ produce identical samples: $\langle \mathbf{f}, \mathbf{v}_i\rangle = \langle \mathbf{g}, \mathbf{v}_i\rangle$ for all $i$.

Note that consistent reconstruction implies that $\cH = W \oplus V^\perp$. To see this, suppose $\{\mathbf{w}_i\}_{i \in I}$ and $\{\mathbf{v}_i\}_{i \in I}$ perform consistent reconstruction on $\mathcal{H}$. Then for every $ \mathbf{f} \in \mathcal{H}$, we know that $\langle \mathbf{f}, \mathbf{v}_i \rangle = \langle \hat{\mathbf{f}}, \mathbf{v}_i \rangle$ for all $i \in I$. Since $\{\mathbf{v}_i\}_{i \in I}$ is a frame for $V$, this implies $\mathbf{f}-\hat{\mathbf{f}} \in V^\perp$. This means that every $\mathbf{f} \in \mathcal{H}$ admits the decomposition
\[
    \mathbf{f} = \hat{\mathbf{f}} + (\mathbf{f}-\hat{\mathbf{f}}) \in W \oplus V^\perp.
\]
Since $W \cap V^\perp = \{\mathbf{0}\}$, this implies $\mathcal{H} = W \oplus V^\perp$.

The following theorem formalizes the connection between consistent reconstruction and oblique dual frames. The relation between oblique projection and consistent reconstruction is also illustrated in \cref{fig:sample-image}. 

\begin{theorem}[{\cite[Theorem 1]{eldar2003sampling} and \cite[Lemma 3.1]{christensen2004oblique}}] \label{thm:Oblique_dual_characterization}
    Let $\{\mathbf{w}_i\}_{i\in I}$ and $\{\mathbf{v}_i\}_{i\in I}$ be frames for $W$ and $V$ where $\mathcal{H} = W \oplus V^\perp$. Then $\{\mathbf{w}_i\}_{i\in I}$ and $\{\mathbf{v}_i\}_{i\in I}$ perform consistent reconstruction if and only if $\{\mathbf{v}_i\}_{i\in I}$ is an \emph{oblique dual frame} of $\{\mathbf{w}_i\}_{i\in I}$ on $V$: for any $\mathbf{f} \in \mathcal{H}$, $\bpi_{W V^\perp} \mathbf{f} = \hat{\mathbf{f}}$, where $ \hat{\mathbf{f}}= \sum_{i\in I} \langle \mathbf{f}, \mathbf{v}_i \rangle \mathbf{w}_i$.
\end{theorem}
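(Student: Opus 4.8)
The plan is to reduce the whole statement to the uniqueness of the direct sum decomposition $\mathcal{H} = W \oplus V^\perp$. First I would introduce the bounded linear operator $T \colon \mathcal{H} \to \mathcal{H}$, $T\mathbf{f} = \hat{\mathbf{f}} = \sum_{i\in I}\langle \mathbf{f},\mathbf{v}_i\rangle \mathbf{w}_i$. This is well defined because a frame for a closed subspace is automatically a Bessel sequence for all of $\mathcal{H}$ with the same bound, so $(\langle\mathbf{f},\mathbf{v}_i\rangle)_{i\in I}\in\ell^2(I)$ and the synthesis operator of $\{\mathbf{w}_i\}_{i\in I}$ is bounded on $\ell^2(I)$; moreover $\range(T)\subseteq W$ since each $\mathbf{w}_i\in W$ and $W$ is closed. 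With this notation, the claim ``$\{\mathbf{v}_i\}_{i\in I}$ is an oblique dual frame of $\{\mathbf{w}_i\}_{i\in I}$ on $V$'' is exactly the assertion $T=\bpi_{WV^\perp}$ (condition $(2)$ of \cref{def:obliquedual}), so the theorem becomes: consistent reconstruction holds if and only if $T=\bpi_{WV^\perp}$.

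The key elementary observation I would record is that, for a fixed $\mathbf{f}\in\mathcal{H}$, the identity $\langle\mathbf{f},\mathbf{v}_i\rangle=\langle\hat{\mathbf{f}},\mathbf{v}_i\rangle$ for every $i$ is equivalent to $\mathbf{f}-\hat{\mathbf{f}}\perp\mathbf{v}_i$ for every $i$, which in turn is equivalent to $\mathbf{f}-\hat{\mathbf{f}}\in V^\perp$. The last step uses that $\{\mathbf{v}_i\}_{i\in I}$ is a frame for $V$, hence $\overline{\operatorname{span}}\{\mathbf{v}_i\}_{i\in I}=V$ (the lower frame bound forces any vector of $V$ orthogonal to all the $\mathbf{v}_i$ to vanish). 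Thus consistent reconstruction is precisely the statement that $\mathbf{f}-T\mathbf{f}\in V^\perp$ for every $\mathbf{f}\in\mathcal{H}$.

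From here both directions are immediate. For the forward direction, assuming consistent reconstruction, for each $\mathbf{f}$ the decomposition $\mathbf{f}=T\mathbf{f}+(\mathbf{f}-T\mathbf{f})$ has $T\mathbf{f}\in W$ and $\mathbf{f}-T\mathbf{f}\in V^\perp$; by uniqueness of the splitting $\mathcal{H}=W\oplus V^\perp$ this forces $T\mathbf{f}$ to be the $W$-component of $\mathbf{f}$, i.e.\ $T\mathbf{f}=\bpi_{WV^\perp}\mathbf{f}$ by the defining property of the oblique projection, which is the desired ``$\bpi_{WV^\perp}\mathbf{f}=\hat{\mathbf{f}}$''. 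For the converse, if $T=\bpi_{WV^\perp}$ then $\mathbf{f}-T\mathbf{f}=\mathbf{f}-\bpi_{WV^\perp}\mathbf{f}$ lies in $\kernel(\bpi_{WV^\perp})=V^\perp$ (apply the idempotent $\bpi_{WV^\perp}$ again to get $\mathbf{0}$), so by the observation above consistent reconstruction holds. I do not expect a genuine obstacle: the only points needing a little care are the convergence of the series defining $\hat{\mathbf{f}}$ together with $\range(T)\subseteq W$, and the density of $\operatorname{span}\{\mathbf{v}_i\}_{i\in I}$ in $V$; everything else is bookkeeping around the unique $W\oplus V^\perp$ splitting.
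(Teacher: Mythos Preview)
Your proof is correct. Note that the paper does not give its own proof of this theorem---it is quoted from \cite{eldar2003sampling} and \cite{christensen2004oblique}---but the paragraph immediately preceding the theorem statement sketches exactly your key observation (that $\langle\mathbf{f},\mathbf{v}_i\rangle=\langle\hat{\mathbf{f}},\mathbf{v}_i\rangle$ for all $i$ forces $\mathbf{f}-\hat{\mathbf{f}}\in V^\perp$, whence the direct-sum decomposition identifies $\hat{\mathbf{f}}$ with the $W$-component), so your argument is fully aligned with the paper's own reasoning.
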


Furthermore, O.~Christensen and Y.~C.~Eldar \cite{christensen2004oblique} provided a parameterization of all oblique dual frames of a fixed frame for $W$.
    \begin{theorem}[{\cite[Theorem 3.2]{christensen2004oblique}}]\label{Oblique_Characterization}
   Let $\mathcal{H} = W \oplus V^\perp$ and let $\{\mathbf{w}_i\}_{i\in I}$ be a frame for $W$. Then the oblique dual frames of $\{\mathbf{w}_i\}_{i\in I}$ on $V$  are precisely the families
\begin{equation*}
    \left\{\mathbf{v}_i\right\}_{i\in I} = \left\{ \bpi_{VW^\perp} \mathbf{S}^\dagger \mathbf{w}_i + \mathbf{h}_i - \sum_{j\in I} \langle \mathbf{S}^\dagger \mathbf{w}_i, \mathbf{w}_j \rangle \mathbf{h}_j \right\}_{i\in I},
\end{equation*}
where $\{\mathbf{h}_i\}_{i\in I} \subset V$ is a Bessel sequence for $\mathcal{H}$ and $\mathbf{S}^\dagger$ is the Moore–Penrose inverse of the frame operator $\mathbf{S}$ for $\{\mathbf{w}_i\}_{i\in I}$. 
\end{theorem}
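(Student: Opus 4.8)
The plan is to prove the two inclusions of the asserted equality of families separately. The ``only if'' direction turns out to be immediate from \cref{def:obliquedual}, so essentially all of the work lies in the ``if'' direction. Suppose first that $\{\mathbf{v}_i\}_{i\in I}$ is an arbitrary oblique dual frame of $\{\mathbf{w}_i\}_{i\in I}$ on $V$. I would take $\mathbf{h}_i:=\mathbf{v}_i$, which is an admissible choice since $\{\mathbf{v}_i\}_{i\in I}\subset V$ and a frame for $V$ is a Bessel sequence for $\mathcal{H}$. Applying condition~$(3)$ of \cref{def:obliquedual} to the vector $\mathbf{S}^\dagger\mathbf{w}_i\in\mathcal{H}$ gives $\bpi_{VW^\perp}\mathbf{S}^\dagger\mathbf{w}_i=\sum_{j\in I}\langle\mathbf{S}^\dagger\mathbf{w}_i,\mathbf{w}_j\rangle\mathbf{v}_j$, so the proposed formula collapses:
\[
\bpi_{VW^\perp}\mathbf{S}^\dagger\mathbf{w}_i+\mathbf{h}_i-\sum_{j\in I}\langle\mathbf{S}^\dagger\mathbf{w}_i,\mathbf{w}_j\rangle\mathbf{h}_j=\bpi_{VW^\perp}\mathbf{S}^\dagger\mathbf{w}_i+\mathbf{v}_i-\bpi_{VW^\perp}\mathbf{S}^\dagger\mathbf{w}_i=\mathbf{v}_i,
\]
so every oblique dual frame has the claimed form.

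For the ``if'' direction, fix a Bessel sequence $\{\mathbf{h}_i\}_{i\in I}\subset V$ for $\mathcal{H}$, write $\mathbf{v}_i:=\bpi_{VW^\perp}\mathbf{S}^\dagger\mathbf{w}_i+\mathbf{r}_i$ with $\mathbf{r}_i:=\mathbf{h}_i-\sum_{j\in I}\langle\mathbf{S}^\dagger\mathbf{w}_i,\mathbf{w}_j\rangle\mathbf{h}_j$, and verify in order: (a) $\mathbf{v}_i\in V$ for each $i$; (b) $\{\mathbf{v}_i\}_{i\in I}$ is a Bessel sequence; (c) $\mathbf{f}=\sum_{i\in I}\langle\mathbf{f},\mathbf{v}_i\rangle\mathbf{w}_i$ for all $\mathbf{f}\in W$; (d) $\{\mathbf{v}_i\}_{i\in I}$ is a frame for $V$. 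Once these hold, (a) and (d) say $\{\mathbf{v}_i\}_{i\in I}\subset V$ is a frame for $V$, and (c) is condition~$(1)$ of \cref{def:obliquedual}, so $\{\mathbf{v}_i\}_{i\in I}$ is an oblique dual frame of $\{\mathbf{w}_i\}_{i\in I}$ on $V$. Item (a) is clear because $\bpi_{VW^\perp}$ maps into $V$ and $\{\mathbf{h}_i\}_{i\in I}\subset V$. For (b), each of $\{\bpi_{VW^\perp}\mathbf{S}^\dagger\mathbf{w}_i\}_{i\in I}$, $\{\mathbf{h}_i\}_{i\in I}$, and $\{\sum_{j\in I}\langle\mathbf{S}^\dagger\mathbf{w}_i,\mathbf{w}_j\rangle\mathbf{h}_j\}_{i\in I}$ is the image of the Bessel sequence $\{\mathbf{w}_i\}_{i\in I}$ (or of $\{\mathbf{h}_i\}_{i\in I}$) under a bounded operator---for the third family, the composition of $\mathbf{S}^\dagger$, the analysis operator of $\{\mathbf{w}_j\}_{j\in I}$, and the synthesis operator of $\{\mathbf{h}_j\}_{j\in I}$---and a finite sum of Bessel sequences is again Bessel.

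The heart of the argument is (c). Using $\bpi_{WV^\perp}=(\bpi_{VW^\perp})^*$, that $\bpi_{WV^\perp}$ is the identity on $W$, and that $\{\mathbf{S}^\dagger\mathbf{w}_i\}_{i\in I}$ is the canonical dual frame of $\{\mathbf{w}_i\}_{i\in I}$ in $W$, the ``canonical part'' already reconstructs: for $\mathbf{f}\in W$, $\sum_{i\in I}\langle\mathbf{f},\bpi_{VW^\perp}\mathbf{S}^\dagger\mathbf{w}_i\rangle\mathbf{w}_i=\sum_{i\in I}\langle\bpi_{WV^\perp}\mathbf{f},\mathbf{S}^\dagger\mathbf{w}_i\rangle\mathbf{w}_i=\sum_{i\in I}\langle\mathbf{f},\mathbf{S}^\dagger\mathbf{w}_i\rangle\mathbf{w}_i=\mathbf{f}$. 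It remains to check that $\sum_{i\in I}\langle\mathbf{f},\mathbf{r}_i\rangle\mathbf{w}_i=0$ for $\mathbf{f}\in W$. Since $\mathbf{S}^\dagger$ is self-adjoint, $\langle\mathbf{f},\mathbf{r}_i\rangle=\langle\mathbf{f},\mathbf{h}_i\rangle-\sum_{j\in I}\langle\mathbf{S}^\dagger\mathbf{w}_j,\mathbf{w}_i\rangle\langle\mathbf{f},\mathbf{h}_j\rangle$, so, interchanging the order of summation, $\sum_{i\in I}\langle\mathbf{f},\mathbf{r}_i\rangle\mathbf{w}_i=\sum_{i\in I}\langle\mathbf{f},\mathbf{h}_i\rangle\mathbf{w}_i-\sum_{j\in I}\langle\mathbf{f},\mathbf{h}_j\rangle\bigl(\sum_{i\in I}\langle\mathbf{S}^\dagger\mathbf{w}_j,\mathbf{w}_i\rangle\mathbf{w}_i\bigr)=\sum_{i\in I}\langle\mathbf{f},\mathbf{h}_i\rangle\mathbf{w}_i-\sum_{j\in I}\langle\mathbf{f},\mathbf{h}_j\rangle\,\mathbf{S}\mathbf{S}^\dagger\mathbf{w}_j$, which vanishes because $\mathbf{S}\mathbf{S}^\dagger=\bP_W$ and $\mathbf{w}_j\in W$. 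For (d), given $\mathbf{f}\in V$, set $\mathbf{g}:=\bpi_{WV^\perp}\mathbf{f}\in W$; by \cref{lemma:oblique_proj_identity} with the roles of $V$ and $W$ interchanged (valid by \cref{lem:directsumangle}) one has $\bP_V\bpi_{WV^\perp}=\bP_V$, so $\langle\mathbf{g},\mathbf{v}_i\rangle=\langle\bP_V\mathbf{g},\mathbf{v}_i\rangle=\langle\mathbf{f},\mathbf{v}_i\rangle$ for all $i$ and $\|\mathbf{f}\|=\|\bP_V\mathbf{g}\|\le\|\mathbf{g}\|$; applying the reconstruction identity (c) to $\mathbf{g}$ together with Cauchy--Schwarz and the upper frame bound $B$ of $\{\mathbf{w}_i\}_{i\in I}$ gives $\sum_{i\in I}|\langle\mathbf{f},\mathbf{v}_i\rangle|^2=\sum_{i\in I}|\langle\mathbf{g},\mathbf{v}_i\rangle|^2\ge\|\mathbf{g}\|^2/B\ge\|\mathbf{f}\|^2/B$, which, with the upper (Bessel) bound from (b), gives both frame bounds for $\{\mathbf{v}_i\}_{i\in I}$ on $V$.

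I expect the only real obstacle to be bookkeeping: justifying the rearrangement of the double sum and the Bessel estimates rigorously by passing to the bounded analysis and synthesis operators of $\{\mathbf{w}_i\}_{i\in I}$ and $\{\mathbf{h}_i\}_{i\in I}$ (in the finite-dimensional setting used in the remainder of the paper this is automatic). The underlying algebraic cancellation, driven entirely by $\mathbf{S}\mathbf{S}^\dagger=\bP_W$, is short.
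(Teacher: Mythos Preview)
The paper does not prove \cref{Oblique_Characterization}; it is stated in the preliminaries as a citation to Christensen and Eldar~\cite{christensen2004oblique}, so there is no in-paper proof to compare against directly. Your argument is correct and self-contained: the ``only if'' direction via $\mathbf{h}_i:=\mathbf{v}_i$ and condition~$(3)$ of \cref{def:obliquedual} is exactly the standard trick, and your ``if'' direction correctly isolates the cancellation $\mathbf{S}\mathbf{S}^\dagger=\bP_W$; the lower-frame-bound argument in (d) via $\bP_V\bpi_{WV^\perp}=\bP_V$ is a clean way to close the loop.

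It is worth noting that your proof is essentially the discrete template for the paper's own proof of the probabilistic analogue, \cref{lemma:poblique_dual_equiv}: there the ``only if'' direction likewise sets $h:=T$ and uses the oblique-dual identity to collapse the formula, and the ``if'' direction reduces to $\mathbf{S}_\mu\mathbf{S}_\mu^\dagger=\bP_W$. So while the paper does not prove \cref{Oblique_Characterization}, your approach matches the one the authors adopt for its measure-theoretic counterpart.
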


The frame $\{\bpi_{VW^\perp} \mathbf{S}^\dagger \mathbf{w}_i\}_{i\in I}$ is called the \emph{canonical oblique dual} of $\{\mathbf{w}_i\}_{i\in I}$. It was also shown in \cite{eldar2003sampling,christensen2004oblique,eldar2005general} that for a given $\mathbf{f} \in \mathcal{H}$, among all coefficients 
$\{c_i\}_{i\in I} \in \ell^2(I)$ for which $
\bpi_{WV^\perp} \mathbf{f} = \sum_{i\in I} c_i \mathbf{w}_i$,
the coefficient sequence with the minimal $\ell^2$ energy is given by $\{\langle \mathbf{f}, \bpi_{VW^\perp} \mathbf{S}^\dagger \mathbf{w}_i\rangle\}_{i\in I}$.
The reconstruction error using oblique dual frames for signal processing can be found in \cite{unser2002general, unser2002generalized, eldar2003sampling, berger2019sampling}: for any $\mathbf{f} \in \mathcal{H}$, 
\begin{equation*}
\| \mathbf{f} - \bP_{W} \mathbf{f} \| \leq \| \mathbf{f} - \bpi_{WV^\perp} \mathbf{f} \|
\leq \frac{1}{\cos(\theta_{WV})} \| \mathbf{f} - \bP_{W} \mathbf{f} \|,
\end{equation*}
where $\theta_{WV} \in [0, \frac{\pi}{2}]$ is the maximum angle between subspaces $W$ and $V$.
This inequality quantifies the stability and loss introduced by using distinct sampling and reconstruction subspaces. Moreover, it shows that the reconstruction via $\bpi_{W V^\perp}$ and oblique dual frames is at least as accurate as the orthogonal projection onto $W$, up to a factor depending on the angle between $W$ and $V$.

The following is a simple example of oblique dual frames in $\mathbb{R}^2$.
\begin{example}
Suppose $\mathbf{x} = \begin{pmatrix}
        1 \\0
    \end{pmatrix}$, $\mathbf{y} = \begin{pmatrix}
        1 \\1
    \end{pmatrix}$, and $\mathbf{z} = \begin{pmatrix}
        1 \\-1
    \end{pmatrix}$.
    Let $W = \spanning\{\mathbf{x}\}$ and $V = \spanning\{\mathbf{y}\}$. Then $V^\perp = \spanning\{\mathbf{z}\}$ and $\mathbb{R}^2 = W \oplus V^\perp$. Clearly, $\{\mathbf{x}\}$ is a frame for $W$ and $\{\mathbf{y}\}$ is a frame for $V$. And for any $\mathbf{f} = \begin{pmatrix}
        a \\0
    \end{pmatrix} \in W$ where $a \in \mathbb{R}$, 
    \begin{equation*}
        \mathbf{f} = a \mathbf{x}= \langle \mathbf{f}, \mathbf{y} \rangle \mathbf{x}.
    \end{equation*}
Thus, $\{\mathbf{y}\}$ is an oblique dual frame of $\{\mathbf{x}\}$ on $V$. 
\end{example}

\begin{figure}[!ht]
    \centering
    \includegraphics[scale=0.7]{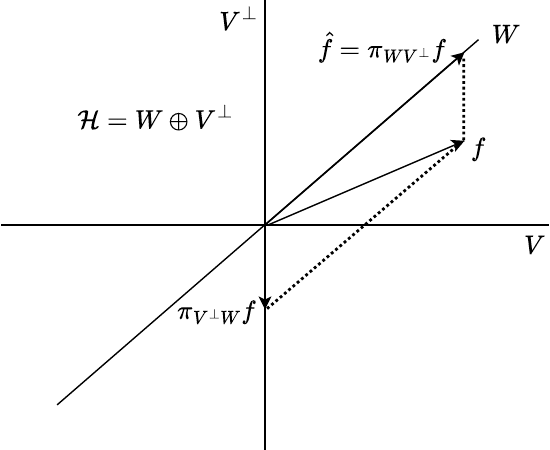} 
    \caption{Illustration of oblique projections and consistent reconstruction. Here $W$ and $V$ are closed subspaces of a Hilbert space $\mathcal{H}$ with $\mathcal{H} = W \oplus V^\perp$. $\{\mathbf{w}_i\}_{i \in I} \subset W$ and $\{\mathbf{v}_i\}_{i \in I} \subset V$ are not only Bessel sequences for $\mathcal{H}$, but $\{\mathbf{w}_i\}_{i \in I}$ and $\{\mathbf{v}_i\}_{i \in I}$ are frames for $W$ and $V$, respectively. Furthermore, $\bpi_{WV^\perp}$ is the oblique projection of $\mathcal{H}$ onto $W$ along $V^\perp$. Consistent reconstruction requires that $\bpi_{W V^\perp}\mathbf{f} = \hat{\mathbf{f}}$ for all $\mathbf{f} \in \mathcal{H}$, where $\hat{\mathbf{f}} = \sum_{i\in I} \langle \mathbf{f}, \mathbf{v}_i \rangle \mathbf{w}_i$.} 
    \label{fig:sample-image}
\end{figure}

\subsection{Probabilistic Frames and Optimal Transport} 
Let $\mathcal{P}(\mathbb{R}^n)$ be the set of Borel probability measures on $\mathbb{R}^n$ and $\mathcal{P}_2(\mathbb{R}^n) \subset \mathcal{P}(\mathbb{R}^n)$  the set of Borel probability measures with finite second moments. That is, if $\mu \in \mathcal{P}_2(\mathbb{R}^n)$, then its second moment satisfies
\begin{equation*}
   M_2(\mu) := \int_{\mathbb{R}^n} \Vert {\bf x} \Vert ^2 d\mu({\bf x}) < + \infty .
\end{equation*}
The \textit{support} of $\mu \in  \mathcal{P}(\mathbb{R}^n)$ is defined by 
$$\supp(\mu) = \left\{ {\bf x} \in \mathbb{R}^n: \text{for any} \ r>0, \,\mu(B_r({\bf x}))>0  \right \},$$
where $B_r({\bf x})$ is the open ball centered at ${\bf x}$ with radius $r>0$.
If $\mu \in  \mathcal{P}(\mathbb{R}^n) $ and $f: \mathbb{R}^n \rightarrow \mathbb{R}^m$ is a Borel measurable map where $n$ may differ from $m$, then $f_{\#} \mu \in \mathcal{P}(\mathbb{R}^m)$ is called the \textit{pushforward} of $\mu$ by  the map $f$,  and is defined as
$$f_{\#} \mu (E) := (\mu \circ f^{-1}) (E) = \mu \big (f^{-1}(E) \big ) \ \text{for any Borel set}  \ E \subset \mathbb{R}^m.$$ 
If $f$ is linear and represented by a matrix ${\bf A}$ with respect to some basis, then ${\bf A}_{\#} \mu$ is used to denote $f_{\#} \mu$. In particular, $(\mathbf{Id}, f)$ is used to denote the map from $\mathbb{R}^n$ to $\mathbb{R}^n \times \mathbb{R}^m$ via $\mathbf{x} \mapsto (\mathbf{x}, f(\mathbf{x}))$. In such a case, $(\mathbf{Id}, f)_\#\mu$ is a probability measure on $\mathbb{R}^n \times \mathbb{R}^m$ that is supported on the graph of $f$.

Finite frames can be viewed as discrete probability measures. Consequently, one can use probability tools (namely, optimal transport) to study frames. Suppose $\{{\bf x}_i\}_{i=1}^N$ is a finite frame with bounds $0<A \leq B$ for a subspace $W$ in the Euclidean space $\mathbb{R}^n$. Letting $\mu_f := \frac{1}{N} \sum\limits_{i=1}^N   \delta_{{\bf x}_i} \in \mathcal{P}(W)$,  the frame definition in~\cref{eq:frame} becomes 
\begin{equation*}
    \frac{A}{N} \Vert {\bf x}  \Vert^2 \leq\int_{W} \vert \left\langle {\bf x},{\bf y} \right\rangle \vert ^2 d\mu_f({\bf y})  \leq \frac{B}{N} \Vert {\bf x}  \Vert^2, \ \text{for any} \ {\bf x} \in W.
\end{equation*}
Inspired by this observation, Ehler~\cite{ehler2012random} introduced the concept of probabilistic frames. Subsequently, Ehler and Okoudjou investigated the probabilistic frame potential \cite{ehler2012minimization} and surveyed this area with particular emphasis on its connections to optimal transport \cite{ehler2013probabilistic}. 
Note that when $W \subset \mathbb{R}^n$ is closed (e.g., a subspace), $\mu \in \mathcal{P}(W)$ means that we can extend the measure to a measure in $\mathcal{P}(\mathbb{R}^n)$ by setting the measure of the open set $\bR^n  \setminus W$ to be zero.  We will denote this extended measure also with $\mu$.

\begin{definition}
Let $W$ be a subspace of $\mathbb{R}^n$. $\mu \in  \mathcal{P}(W)$ is called a \emph{probabilistic frame} for $W$ if there exist $0<A \leq B < \infty$ such that for any ${\bf x} \in W$, 
\begin{equation*}
    A \Vert {\bf x}  \Vert^2 \leq\int_{W} \vert \left\langle {\bf x},{\bf y} \right\rangle \vert ^2 d\mu({\bf y})  \leq B \Vert {\bf x}  \Vert^2.
\end{equation*}
The probabilistic frame $\mu$ is said to be \emph{tight} if we may choose $A = B$ and \emph{Parseval} if $A = B=1$. Moreover, $\mu$ is a \emph{Bessel probability measure} for $W$ if the upper (but possibly not the lower) bound holds. 
\end{definition}

One can also define the \emph{frame operator} ${\bf S}_\mu$ for a probabilistic frame $\mu$ on $W$ as the following symmetric positive semi-definite matrix
    \begin{equation*}
          {\bf S}_\mu := \int_{W}  {\bf y} {\bf y}^t d \mu({\bf y}).
    \end{equation*}
Probabilistic frames can be characterized by their frame operators. 

 \begin{proposition}[{\cite[Theorem 12.1]{ehler2013probabilistic} and \cite[Proposition 3.1]{maslouhi2019probabilistic}}] \label{TAcharacterization}
Let $\mu  \in \mathcal{P}(W)$. Then $\mu$ is a probabilistic frame on $W$ if and only if $\mu \in \mathcal{P}_2(W)$ and $\spanning\{\supp(\mu)\} = W$. Furthermore, if $\mu  \in \mathcal{P}_2(W)$, then $\mu$ is a probabilistic frame on $W$ if and only if the restriction of ${\bf S}_{\mu}$ to $W$ is positive definite, and $\mu$ is a tight probabilistic frame on $W$ with bound $A > 0$ if and only if ${\bf S}_{\mu} = A \ \bP_W$, where $\bP_W$ is the orthogonal projection of $\mathbb{R}^n$ onto $W$. 
\end{proposition}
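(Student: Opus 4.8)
The plan is to reduce every clause to a linear-algebraic statement about the symmetric positive semidefinite matrix $\mathbf{S}_\mu$, via the pointwise identity $\int_{W}|\langle\mathbf{x},\mathbf{y}\rangle|^2\,d\mu(\mathbf{y})=\langle\mathbf{S}_\mu\mathbf{x},\mathbf{x}\rangle$ (valid once $\mathbf{S}_\mu$ is well defined). Write $d=\dim W$. First I would handle second moments and the upper bound. If $\mu$ is a probabilistic frame on $W$ with upper bound $B$, I pick an orthonormal basis $\mathbf{e}_1,\dots,\mathbf{e}_d$ of $W$; since every $\mathbf{y}\in W$ satisfies $\|\mathbf{y}\|^2=\sum_j|\langle\mathbf{e}_j,\mathbf{y}\rangle|^2$, summing the $d$ upper bounds gives $M_2(\mu)=\int_W\|\mathbf{y}\|^2\,d\mu(\mathbf{y})\le Bd<\infty$, so $\mu\in\mathcal{P}_2(W)$. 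Conversely, if $\mu\in\mathcal{P}_2(W)$, Cauchy–Schwarz gives $\int_W|\langle\mathbf{x},\mathbf{y}\rangle|^2\,d\mu(\mathbf{y})\le M_2(\mu)\|\mathbf{x}\|^2$, so the Bessel bound holds automatically with $B=M_2(\mu)$; moreover the entries of $\mathbf{S}_\mu$ are then finite, $\mathbf{S}_\mu$ is symmetric positive semidefinite, $\mathbf{S}_\mu\mathbf{x}\in W$ for all $\mathbf{x}$ (because $\supp(\mu)\subseteq W$ and $W$ is closed), and $W^\perp\subseteq\kernel\mathbf{S}_\mu$.

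Next I would isolate the lower bound. For $\mu\in\mathcal{P}_2(W)$ the only remaining content of the definition is the existence of $A>0$ with $\langle\mathbf{S}_\mu\mathbf{x},\mathbf{x}\rangle\ge A\|\mathbf{x}\|^2$ for all $\mathbf{x}\in W$. Since $\mathbf{x}\mapsto\langle\mathbf{S}_\mu\mathbf{x},\mathbf{x}\rangle$ is continuous and the unit sphere of $W$ is compact, such an $A$ exists if and only if $\langle\mathbf{S}_\mu\mathbf{x},\mathbf{x}\rangle>0$ for every nonzero $\mathbf{x}\in W$, i.e.\ if and only if the restriction of $\mathbf{S}_\mu$ to $W$ is positive definite; this is the second equivalence. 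To bring in the support condition I use that $\langle\mathbf{S}_\mu\mathbf{x},\mathbf{x}\rangle=\int_W|\langle\mathbf{x},\mathbf{y}\rangle|^2\,d\mu(\mathbf{y})=0$ forces $\supp(\mu)\subseteq\mathbf{x}^\perp$, because $\{\mathbf{y}:\langle\mathbf{x},\mathbf{y}\rangle\neq0\}$ is open and $\mu$-null, hence disjoint from $\supp(\mu)$. Thus $\mathbf{S}_\mu|_W$ fails to be positive definite precisely when some nonzero $\mathbf{x}\in W$ is orthogonal to $\supp(\mu)$, i.e.\ precisely when $\spanning\{\supp(\mu)\}\subsetneq W$; combined with the second-moment step, this gives the first equivalence.

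Finally I would treat the tight case. If $\mathbf{S}_\mu=A\,\bP_W$ then $\langle\mathbf{S}_\mu\mathbf{x},\mathbf{x}\rangle=A\|\mathbf{x}\|^2$ for all $\mathbf{x}\in W$, so $\mu$ is tight with bound $A$. Conversely, if $\langle\mathbf{S}_\mu\mathbf{x},\mathbf{x}\rangle=A\|\mathbf{x}\|^2$ on $W$, polarizing (using that $\langle\mathbf{S}_\mu\,\cdot\,,\cdot\rangle$ is symmetric) yields $\langle\mathbf{S}_\mu\mathbf{x},\mathbf{z}\rangle=A\langle\mathbf{x},\mathbf{z}\rangle$ for all $\mathbf{x},\mathbf{z}\in W$; since $\mathbf{S}_\mu\mathbf{x}\in W$ this gives $\mathbf{S}_\mu\mathbf{x}=A\mathbf{x}$ for $\mathbf{x}\in W$, and $\mathbf{S}_\mu$ annihilates $W^\perp$, so $\mathbf{S}_\mu=A\,\bP_W$. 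I do not expect a serious obstacle anywhere; the one step that needs genuine care is the measure-theoretic argument identifying vanishing of the quadratic form with orthogonality to $\supp(\mu)$, together with the bookkeeping that $\mathbf{S}_\mu$ maps into $W$ and kills $W^\perp$, which is what makes the ``restriction to $W$'' formulations consistent with the global identity $\mathbf{S}_\mu=A\,\bP_W$.
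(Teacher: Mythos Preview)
Your proof is correct. Note, however, that the paper does not actually prove this proposition: it is stated as a citation of \cite[Theorem 12.1]{ehler2013probabilistic} and \cite[Proposition 3.1]{maslouhi2019probabilistic}, with no argument given. Your write-up supplies a clean self-contained proof where the paper simply imports the result.
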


Since probabilistic frames on $W$ have finite second moments, they naturally lie in $\mathcal{P}_2(W)$ and can be studied using optimal transport and the 2-Wasserstein distance~\cite{ehler2013probabilistic, wickman2014optimal}. Given two probabilistic frames $\mu$ and $\nu$ on $W$, suppose $\Gamma(\mu,\nu)$ is the set of transport couplings with marginals $\mu$ and $\nu$, that is,
$$ \Gamma(\mu,\nu) :=  \left \{ \gamma \in \mathcal{P}(W \times W): {\pi_{{ x}}}_{\#} \gamma = \mu, \ {\pi_{{ y}}}_{\#} \gamma = \nu \right \},$$
where $\pi_{{ x}}$, $\pi_{{ y}}$ are projections onto the ${\bf x}$ and ${\bf y}$ coordinates: for any $({\bf x},{\bf y}) \in \mathbb{R}^n \times \mathbb{R}^n $, $\pi_{{ x}}({\bf x}, {\bf y}) = {\bf x}$ and $ \pi_{{ y}}({\bf x},{\bf y}) = {\bf y}$. The 2-Wasserstein distance $ W_2(\mu,\nu)$ is often used to quantify the distance between $\mu$ and $\nu$:  
$$ W_2(\mu,\nu) := \left(\underset{\gamma \in \Gamma(\mu,\nu)}{\operatorname{inf}}  \int_{W \times W}  \left \Vert {\bf x}-{\bf y}\right \Vert^2 \ d\gamma({\bf x},{\bf y})\right)^{1/2}.$$
This is of interest even if one is only concerned with finite frames, since the $2$-Wasserstein distance can quantify the distance between probabilistic frames induced by finite frames of different cardinalities.
See~\cite{figalli2021invitation} for details on optimal transport and Wasserstein distance.

Let $W$ be a subspace of $\bR^n$ and $\mu \in \mathcal{P}_2(W)$. As above, consider the extension of $\mu$---which we also call $\mu$---to $\cP(\bR^n)$ by setting the measure of $\bR^n \setminus W$ to be zero. We may then extend any measurable $f: W \rar \bR$ to $\bR^n \rar \bR$ by zero-padding or restrict any $f: \bR^n \rar \bR$ to $W \rar \bR$ to see that $L^2(\mu, W) \cong L^2(\mu,\bR^n)$. Furthermore, note that $\mu \in \mathcal{P}_2(W)$ also implies $\mu \in \mathcal{P}_2(\mathbb{R}^n)$, since
\begin{equation*}
   \int_{\mathbb{R}^n} \Vert {\bf x} \Vert ^2 d\mu({\bf x}) = \int_{W} \Vert {\bf x} \Vert ^2 d\mu({\bf x})< + \infty .
\end{equation*}

Therefore, for $\mu \in \mathcal{P}_2(W)$, its analysis operator $U_\mu: \mathbb{R}^n \rightarrow L^2(\mu, W)$ given by $(U_\mu{\bf x})(\cdot) = \langle {\bf x}, \cdot \rangle \in L^2(\mu, W)$ is well-defined (bounded), because for any ${\bf x} \in \mathbb{R}^n$, 
\begin{equation*}
    \| U_\mu{\bf x}\|^2 = \int_{W} |\langle{\bf x}, {\bf y}\rangle|^2 d\mu({\bf y}) \leq \|{\bf x}\|^2 M_2(\mu). 
\end{equation*}
Then the adjoint (synthesis) operator $U_\mu^*: L^2(\mu, W) \rightarrow \mathbb{R}^n$ exists and is given by 
\begin{equation*}
    \ U_\mu^*(\psi) = \int_{W} {\bf x}  \psi({\bf x}) d\mu({\bf x}). 
\end{equation*}
One can verify that as a map on $\mathbb{R}^n$, the frame operator satisfies ${\bf S}_\mu =  U_\mu^* U_\mu$.

Similar to frames, one can reconstruct signals using \emph{dual probabilistic frames}, which are also known as transport duals \cite{wickman2014optimal}. 

\begin{definition}\label{dualDefinition}
Suppose $\mu$ is a probabilistic frame for $W$. $\nu \in \mathcal{P}_2(W)$ is called a \emph{dual probabilistic frame} of $\mu$ on $W$ if there exists $\gamma \in \Gamma(\mu, \nu)$ such that 
\begin{equation*}
   \int_{W \times W} {\bf x}{\bf y}^t d\gamma({\bf x}, {\bf y}) = \bP_W, 
\end{equation*}
which is equivalent to the statement that for any ${\bf f} \in W$, $\int_{W \times W} {\bf x} \langle {\bf y}, {\bf f} \rangle d\gamma({\bf x}, {\bf y}) = {\bf f}$. 
\end{definition}

Note that $\nu \in \mathcal{P}_2(V)$ automatically implies (via extension) that $\nu \in \mathcal{P}_2(\mathbb{R}^n)$, and one can still define $ \Gamma(\mu,\nu) :=  \{ \gamma \in \mathcal{P}(W \times V): {\pi_{{ x}}}_{\#} \gamma = \mu, \ {\pi_{{ y}}}_{\#} \gamma = \nu \}$. In addition, if $\gamma \in \Gamma(\mu, \nu)$, then $\gamma \in  \mathcal{P}_2(W \times V)$ also implies $\gamma \in  \mathcal{P}_2(\mathbb{R}^n \times \mathbb{R}^n)$, that is, 
\begin{equation*}
  \int_{\mathbb{R}^n \times \mathbb{R}^n} \mathbf{x} \mathbf{y}^t d\gamma(\mathbf{x}, \mathbf{y}) =  \int_{W \times V} \mathbf{x} \mathbf{y}^t d\gamma(\mathbf{x}, \mathbf{y}).
\end{equation*}

Let $\mu$ be a probabilistic frame for $W$ and ${\bf S}_{\mu}^\dagger$ the Moore–Penrose inverse of ${\bf S}_{\mu}$. Then ${{\bf S}_{\mu}^\dagger}_\#\mu$ is called the canonical dual frame of $\mu$ on $W$, since 
\begin{equation*}
    \int_{W \times W} {\bf x}{\bf y}^t d(\mathbf{Id}, {\bf S}_{\mu}^\dagger)_\#\mu({\bf x,y}) = \int_{W} {\bf x}{\bf x}^t d\mu({\bf x}) \ {\bf S}_{\mu}^\dagger = {\bf S}_{\mu} {\bf S}_{\mu}^\dagger= \bP_W,
\end{equation*}
where the last identity follows from \cref{lemma:frame_projection}.
Similarly,  ${({\bf S}_{\mu}^\dagger)^{\frac{1}{2}}}_\#\mu$ is called the canonical Parseval  frame of $\mu$ on $W$, since
\begin{equation*}
    \int_{W} {\bf x}{\bf x}^t d{({\bf S}_{\mu}^\dagger)^{\frac{1}{2}}}_\#\mu({\bf x}) =({\bf S}_{\mu}^\dagger)^{1/2} {\bf S}_{\mu}  ({\bf S}_{\mu}^\dagger)^{1/2}= \bP_W.
\end{equation*}

\begin{lemma}\label{lemma:frame_projection}
    Let $W$ be a subspace in $\mathbb{R}^n$ and $\mu$ be a probabilistic frame for $W$ with frame operator ${\bf S}_\mu$. Then $\mathrm{Ran}({\bf S}_\mu)=W$ and $\mathrm{Ker}({\bf S}_\mu) = W^\perp$. Furthermore, ${\bf S}_\mu {\bf S}_\mu^{\dagger}$, ${\bf S}_\mu^{\dagger} {\bf S}_\mu$, and $({\bf S}_{\mu}^\dagger)^{1/2} {\bf S}_{\mu}  ({\bf S}_{\mu}^\dagger)^{1/2}$ are orthogonal projections of $\mathbb{R}^n$ onto the subspace $W$. 
\end{lemma}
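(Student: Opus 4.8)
The plan is to pin down $\mathrm{Ker}({\bf S}_\mu)$ directly from positive semi-definiteness of ${\bf S}_\mu=\int_W {\bf y}{\bf y}^t\,d\mu({\bf y})$, deduce $\mathrm{Ran}({\bf S}_\mu)$ from symmetry, and then read off the three projection statements from standard Moore–Penrose identities together with the spectral calculus for the symmetric positive semi-definite matrix ${\bf S}_\mu$. Note first that $\mu$, being a probabilistic frame for $W$, lies in $\mathcal{P}_2(W)$ by \cref{TAcharacterization}, so ${\bf S}_\mu$ is a well-defined symmetric positive semi-definite $n\times n$ matrix, with $\langle {\bf S}_\mu {\bf x}, {\bf x}\rangle = \int_W |\langle {\bf x}, {\bf y}\rangle|^2\,d\mu({\bf y})$ for every ${\bf x}\in\mathbb{R}^n$.

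\textbf{Kernel and range.} Since ${\bf S}_\mu\succeq 0$, we have ${\bf x}\in\mathrm{Ker}({\bf S}_\mu)$ iff $\langle {\bf S}_\mu {\bf x},{\bf x}\rangle=0$, i.e. iff $\langle {\bf x},{\bf y}\rangle=0$ for $\mu$-a.e.\ ${\bf y}$. The set $\{{\bf y}:\langle {\bf x},{\bf y}\rangle=0\}$ is closed and has full $\mu$-measure, hence contains $\supp(\mu)$, so ${\bf x}\perp\spanning\{\supp(\mu)\}=W$, the last equality being \cref{TAcharacterization}. Conversely ${\bf x}\in W^\perp$ forces $\langle{\bf x},{\bf y}\rangle=0$ for all ${\bf y}\in W\supseteq\supp(\mu)$, hence ${\bf S}_\mu{\bf x}={\bf 0}$. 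Thus $\mathrm{Ker}({\bf S}_\mu)=W^\perp$, and by symmetry $\mathrm{Ran}({\bf S}_\mu)=\mathrm{Ker}({\bf S}_\mu)^\perp=(W^\perp)^\perp=W$.

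\textbf{The three projections.} For any matrix $A$, $AA^\dagger$ is the orthogonal projection onto $\mathrm{Ran}(A)$ and $A^\dagger A$ is the orthogonal projection onto $\mathrm{Ran}(A^t)$; applying this with $A={\bf S}_\mu={\bf S}_\mu^t$ and the previous paragraph gives ${\bf S}_\mu {\bf S}_\mu^\dagger={\bf S}_\mu^\dagger {\bf S}_\mu=\bP_W$. For the third operator, diagonalize ${\bf S}_\mu=\sum_k \lambda_k\,{\bf u}_k{\bf u}_k^t$ with $\lambda_k>0$ and $\{{\bf u}_k\}$ an orthonormal basis of $W$ (possible since $\mathrm{Ran}({\bf S}_\mu)=W$); then $({\bf S}_\mu^\dagger)^{1/2}=\sum_k \lambda_k^{-1/2}\,{\bf u}_k{\bf u}_k^t$, and since all factors are functions of ${\bf S}_\mu$ they commute, so $({\bf S}_\mu^\dagger)^{1/2}{\bf S}_\mu({\bf S}_\mu^\dagger)^{1/2}={\bf S}_\mu^\dagger {\bf S}_\mu=\bP_W$ (equivalently, $\sum_k \lambda_k^{-1/2}\lambda_k\lambda_k^{-1/2}{\bf u}_k{\bf u}_k^t=\sum_k {\bf u}_k{\bf u}_k^t=\bP_W$).

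\textbf{Main obstacle.} The argument is essentially routine; the only point needing a little care is the measure-theoretic passage in the kernel computation from ``$\langle{\bf x},\cdot\rangle=0$ $\mu$-a.e.''\ to ``$\langle{\bf x},\cdot\rangle=0$ on all of $W$'', which is exactly where the characterization $\spanning\{\supp(\mu)\}=W$ from \cref{TAcharacterization} is used. Once $\mathrm{Ker}$ and $\mathrm{Ran}$ are identified, the projection claims follow immediately from the standard pseudoinverse identities and functional calculus.
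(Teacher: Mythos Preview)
Your proof is correct. The approach differs from the paper's in two small but genuine ways. For the kernel inclusion $\mathrm{Ker}({\bf S}_\mu)\subseteq W^\perp$, the paper decomposes ${\bf f}={\bf f}_W+{\bf f}_{W^\perp}$ and invokes the part of \cref{TAcharacterization} asserting that ${\bf S}_\mu$ restricted to $W$ is positive definite (hence injective) to force ${\bf f}_W={\bf 0}$; you instead exploit the PSD identity $\mathrm{Ker}({\bf S}_\mu)=\{{\bf x}:\langle{\bf S}_\mu{\bf x},{\bf x}\rangle=0\}$ together with the support characterization $\spanning\{\supp(\mu)\}=W$ from the same proposition, via a closed-set argument. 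For the range, the paper applies Rank--Nullity after pinning down the kernel, whereas you use directly that $\mathrm{Ran}({\bf S}_\mu)=\mathrm{Ker}({\bf S}_\mu)^\perp$ for symmetric matrices. Both routes are equally elementary; yours is slightly more streamlined since it avoids the explicit $W\oplus W^\perp$ decomposition and the dimension count, while the paper's version has the minor advantage of not needing the measure-theoretic step relating ``$\mu$-a.e.''\ to $\supp(\mu)$.
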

\begin{proof}
By standard properties of the Moore–Penrose inverse, ${\bf S}_\mu {\bf S}_\mu^{\dagger}$ is the orthogonal projection onto the range of ${\bf S}_\mu$. Since $\mu$ is a probabilistic frame for $W$, then by \cref{TAcharacterization},  $\mu$ is supported on $W$. Then for any $ \mathbf{f} \in \mathbb{R}^n$,
  $$
  {\bf S}_\mu \mathbf{f}=\int_W \langle \mathbf{f},\mathbf{x}\rangle \mathbf{x} d\mu(\mathbf{x}) \in W.
  $$
That is to say, $\mathrm{Ran}({\bf S}_\mu)\subseteq W$. Also note that $W^\perp \subseteq \mathrm{Ker}({\bf S}_\mu)$, since for any $\mathbf{f} \in W^\perp$, ${\bf S}_\mu \mathbf{f} = \mathbf{0}$. If we can show that $\mathrm{Ker}({\bf S}_\mu) = W^\perp$, then it would follow from the Rank–Nullity theorem that
\begin{equation*}
    \Dim(\mathrm{Ran}({\bf S}_\mu)) + \Dim(W^\perp) = n,
\end{equation*}
and hence that
$\mathrm{Ran}({\bf S}_\mu)=W$. 

Now let us show $\mathrm{Ker}({\bf S}_\mu) = W^\perp$. Suppose $\mathbf{f} \in \operatorname{Ker}({\bf S}_\mu)$. Since $\mathbf{f} = \mathbf{f}_W + \mathbf{f}_{W^\perp}$, where $\mathbf{f}_W  \in W$ and $\mathbf{f}_{W^\perp} \in W^\perp$, we have ${\bf S}_\mu \mathbf{f}_{W^\perp} = \mathbf{0}$ and hence 
\begin{equation*}
  \mathbf{0} = {\bf S}_\mu \mathbf{f} =  {\bf S}_\mu (\mathbf{f}_W + \mathbf{f}_{W^\perp}) = {\bf S}_\mu \mathbf{f}_W.
\end{equation*}
Since $\mu$ is a probabilistic frame for $W$, the restriction of ${\bf S}_\mu$  on $W$ is injective by \cref{TAcharacterization}, so $\mathbf{0} = {\bf S}_\mu \mathbf{f}_W$ implies $\mathbf{f}_W = \mathbf{0}$. Therefore, $\mathbf{f} =  \mathbf{f}_{W^\perp} \in W^\perp$ and we conclude that $\mathrm{Ker}({\bf S}_\mu) = W^\perp$.

We have now shown that $\mathrm{Ran}({\bf S}_\mu)=W$, 
and thus ${\bf S}_\mu {\bf S}_\mu^{\dagger}$ is the orthogonal projection of $\mathbb{R}^n$ onto $W$. Since ${\bf S}_\mu$ is symmetric, it follows that ${\bf S}_\mu^{\dagger} {\bf S}_\mu$ is also the orthogonal projection onto $W$. Finally, the spectral decomposition of ${\bf S}_\mu$ shows that $({\bf S}_{\mu}^\dagger)^{1/2} {\bf S}_{\mu}  ({\bf S}_{\mu}^\dagger)^{1/2}$ is the orthogonal projection of $\mathbb{R}^n$ onto the subspace $W$. 
\end{proof}

We also need the following gluing lemma, a standard tool in optimal transport, to ``glue'' two transport couplings together. Analogously to $\pi_x$ and $\pi_y$, let $\pi_{xy}$ and $\pi_{yz}$ be the orthogonal projections onto the corresponding coordinates, i.e.,  for any $({\bf x}, {\bf y}, {\bf z}) \in \mathbb{R}^n \times \mathbb{R}^n \times \mathbb{R}^n$, 
\begin{equation*}
    \pi_x({\bf x}, {\bf y}, {\bf z}) = {\bf x}, \ \pi_y({\bf x}, {\bf y}, {\bf z}) = {\bf y}, \ \pi_{xy}({\bf x}, {\bf y}, {\bf z})=({\bf x}, {\bf y}), \ \pi_{yz}({\bf x}, {\bf y}, {\bf z})=({\bf y}, {\bf z}).
\end{equation*}

\begin{lemma}[Gluing Lemma {\cite[p.~59]{figalli2021invitation}}]\label{gluinglemma} 
Let $W_1$, $W_2$, and $W_3$ be subspaces of $\mathbb{R}^n$ and $\mu_1 \in \mathcal{P}_2(W_1)$, $\mu_2\in \mathcal{P}_2(W_2)$, and $\mu_3 \in \mathcal{P}_2(W_3)$, respectively. Suppose $\gamma^{12} \in \Gamma(\mu_1, \mu_2) \subset \mathcal{P}(W_1 \times W_2)$ and $\gamma^{23} \in \Gamma(\mu_2, \mu_3) \subset \mathcal{P}(W_2 \times W_3)$ such that ${\pi_y}_{\#}\gamma^{12} = \mu_2 = {\pi_x}_{\#}\gamma^{23}$. Then there exists $\gamma^{123} \in \mathcal{P}(W_1 \times W_2 \times W_3)$ such that ${\pi_{xy}}_{\#}\gamma^{123} = \gamma^{12}$ and ${\pi_{yz}}_{\#}\gamma^{123} = \gamma^{23}$. 
\end{lemma}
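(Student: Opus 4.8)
The plan is to construct $\gamma^{123}$ by disintegrating both couplings over their common marginal $\mu_2$ and then recombining the conditional measures. Since $W_1$, $W_2$, and $W_3$ are closed subspaces of $\mathbb{R}^n$, they are Polish spaces, so the disintegration theorem applies. First I would disintegrate $\gamma^{12} \in \mathcal{P}(W_1 \times W_2)$ with respect to the projection $\pi_y$: because ${\pi_y}_{\#}\gamma^{12} = \mu_2$, there is a $\mu_2$-a.e.\ uniquely determined Borel family $\{\gamma^{12}_y\}_{y \in W_2} \subset \mathcal{P}(W_1)$ with
\[
\gamma^{12} = \int_{W_2} \left( \gamma^{12}_y \otimes \delta_y \right) d\mu_2(y),
\]
i.e.\ $\int_{W_1 \times W_2} h \, d\gamma^{12} = \int_{W_2} \int_{W_1} h(\mathbf{x},\mathbf{y})\, d\gamma^{12}_y(\mathbf{x})\, d\mu_2(\mathbf{y})$ for every bounded Borel $h$. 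Symmetrically, disintegrating $\gamma^{23} \in \mathcal{P}(W_2 \times W_3)$ with respect to $\pi_x$ (whose pushforward is also $\mu_2$) yields a Borel family $\{\gamma^{23}_y\}_{y \in W_2} \subset \mathcal{P}(W_3)$ with $\gamma^{23} = \int_{W_2} (\delta_y \otimes \gamma^{23}_y)\, d\mu_2(\mathbf{y})$.

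Next I would define the candidate
\[
\gamma^{123} := \int_{W_2} \left( \gamma^{12}_y \otimes \delta_y \otimes \gamma^{23}_y \right) d\mu_2(\mathbf{y}) \in \mathcal{P}(W_1 \times W_2 \times W_3),
\]
meaning $\int h\, d\gamma^{123} := \int_{W_2} \iint_{W_1 \times W_3} h(\mathbf{x},\mathbf{y},\mathbf{z})\, d\gamma^{12}_y(\mathbf{x})\, d\gamma^{23}_y(\mathbf{z})\, d\mu_2(\mathbf{y})$. One checks this is a well-defined Borel probability measure: the integrand $\mathbf{y} \mapsto \gamma^{12}_y \otimes \delta_y \otimes \gamma^{23}_y$ is a measurable map from $W_2$ into $\mathcal{P}(W_1 \times W_2 \times W_3)$ as a product of measurable families, the total mass is $\int_{W_2} 1\, d\mu_2 = 1$, and countable additivity follows from monotone convergence. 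Then I would verify the marginal conditions: for bounded Borel $g$ on $W_1 \times W_2$, since $\gamma^{23}_y$ has mass $1$ in the $\mathbf{z}$-variable,
\[
\int g\, d\bigl({\pi_{xy}}_{\#}\gamma^{123}\bigr) = \int_{W_2}\int_{W_1} g(\mathbf{x},\mathbf{y})\, d\gamma^{12}_y(\mathbf{x})\, d\mu_2(\mathbf{y}) = \int g\, d\gamma^{12},
\]
so ${\pi_{xy}}_{\#}\gamma^{123} = \gamma^{12}$, and the identical computation (now using that $\delta_y$ and $\gamma^{12}_y$ integrate to $1$ in the $\mathbf{x}$-variable) gives ${\pi_{yz}}_{\#}\gamma^{123} = \gamma^{23}$.

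The only genuinely delicate point is the measurability in the disintegration — that the conditional probabilities $\mathbf{y} \mapsto \gamma^{12}_y$ and $\mathbf{y} \mapsto \gamma^{23}_y$ can be chosen Borel and that the recombined map $\mathbf{y} \mapsto \gamma^{12}_y \otimes \delta_y \otimes \gamma^{23}_y$ inherits this — but this is precisely what the disintegration theorem on Polish spaces provides (see \cite{figalli2021invitation}); everything else is a routine interchange of the order of integration. Finally, although the statement only asserts $\gamma^{123} \in \mathcal{P}(W_1 \times W_2 \times W_3)$, one gets $\gamma^{123} \in \mathcal{P}_2(W_1 \times W_2 \times W_3)$ for free, since its three marginals $\mu_1, \mu_2, \mu_3$ all have finite second moment and $\Vert(\mathbf{x},\mathbf{y},\mathbf{z})\Vert^2 = \Vert\mathbf{x}\Vert^2 + \Vert\mathbf{y}\Vert^2 + \Vert\mathbf{z}\Vert^2$.
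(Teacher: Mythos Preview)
Your proof is correct and follows the standard disintegration argument for the Gluing Lemma. Note, however, that the paper does not supply its own proof of this statement: it simply cites it as a known result from \cite[p.~59]{figalli2021invitation}, so there is no in-paper proof to compare against. Your argument is precisely the one found in standard optimal transport references.
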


\section{Oblique Dual Frame Potential}\label{section:dualptential}
In this section, we consider the oblique dual frame potential where $\mathcal{H}$ is the $n$-dimensional complex space $\mathbb{C}^n$ and $W$ and $V$ are subspaces such that $\mathbb{C}^n = W \oplus V^\perp$. 
The frame potential was introduced by Benedetto and Fickus~\cite{benedetto2003finite}, who showed that finite unit-norm tight frames are unique optimizers of the potential. The frame potential is the component-wise $2$-norm of the frame's Gram matrix; generalizations to other $p$-norms and to probabilistic frames appeared in~\cite{ehler2012minimization}.  Such $p$-norms of inner products are also of interest in spherical designs; see, e.g.,~\cite{seidel2001definitions}. Dual frame potentials were introduced in \cite{christensen2020equiangular}, where the authors found a lower bound for the dual $2$-frame potential and the minimizer is just the canonical dual. Their work was followed by \cite{aceska2022cross}, where they call the dual frame potential the ``cross-frame potential'' and further generalized it to fusion frames. In \cite{chen2025probabilistic}, a probabilistic dual $2$-frame potential was introduced. 

Our contribution is that we consider the $p$-potential over oblique dual frames; some of the results we obtain recover the previous results concerning dual frames, since dual frames are oblique dual frames with $W=V$. 
Note that $\mathbf{v}^*$ denotes the conjugate transpose of vector $\mathbf{v} \in \mathbb{C}^n$, $N$ the number of vectors $\{\bv_i\}_i$ (respectively, $\{\bw_i\}_i)$, and $d_W$ the dimension of $W$, where $d_W \le \min\{n, N\}$.
We use $\mathbf{S}^\dagger$ to denote the Moore–Penrose inverse of a matrix $\mathbf{S}$.  By convention,  the inner product $\langle \cdot, \cdot \rangle$ in $\mathbb{C}^n$ is linear in the first input and conjugate linear in the second: for $ {\bf x}, {\bf y} \in\mathbb{C}^n$, $\langle {\bf x}, {\bf y} \rangle = {\bf y}^*{\bf x}$. 
 So the rank-one operator $\bw \bv^\ast$ acts as ${\bf f} \mapsto \langle {\bf f}, \bv \rangle \bw$.

We first start with the definition of the oblique dual $p$-frame potential.  We will show that, for a given frame on $W$, its oblique dual $2$-frame potential is minimized by the canonical oblique dual, which generalizes \cite[Theorem 2.2]{christensen2020equiangular} concerning the dual $2$-frame potential.

\begin{definition}
    Suppose $\mathbb{C}^n = W \oplus V^\perp$ and $p>0$. Let $\{\mathbf{w}_i\}_{i=1}^N$ be a frame for $W$ and $\{\mathbf{v}_i\}_{i=1}^N$ an oblique dual frame of $\{\mathbf{w}_i\}_{i=1}^N$ on $V$. Then the \textit{oblique dual $p$-frame potential} between $\{\mathbf{w}_i\}_{i=1}^N$ and $\{\mathbf{v}_i\}_{i=1}^N$ is defined as
    \begin{equation*}
       \sum_{i=1}^N \sum_{j=1}^N |\langle \mathbf{w}_i, \mathbf{v}_j \rangle |^p. 
    \end{equation*}
\end{definition}
  When $p=2$, we often drop the parameter and refer to the potential as the \textit{oblique dual frame potential}.

The following lemma extends Proposition 24 in \cite{aceska2022cross} about dual frames.

\begin{lemma}\label{lem:diaggram}
     Let $\{\mathbf{w}_i\}_{i=1}^N$ be a frame for $W$ and $\{\mathbf{v}_i\}_{i=1}^N$ an oblique dual frame of $\{\mathbf{w}_i\}_{i=1}^N$ on $V$, where $\mathbb{C}^n = W \oplus V^\perp$. Then
    \begin{equation*}
        \sum_{i=1}^N|\langle \mathbf{w}_i, \mathbf{v}_i \rangle |^2 \geq \frac{d_W^2}{N}.
    \end{equation*}
Furthermore, the equality holds if and only if $\langle \mathbf{w}_i, \mathbf{v}_i \rangle =\frac{d_W}{N}$ for each $i$. 
\end{lemma}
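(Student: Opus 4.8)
The plan is to express the quantity $\sum_{i=1}^N \langle \mathbf{w}_i, \mathbf{v}_i\rangle$ in closed form using the oblique duality relation, and then apply the Cauchy--Schwarz inequality (in the form relating the $\ell^1$ and $\ell^2$ norms of a length-$N$ vector) to deduce the lower bound on $\sum_{i=1}^N |\langle \mathbf{w}_i, \mathbf{v}_i\rangle|^2$. First I would observe that by \cref{def:obliquedual}(2), applied to each basis vector and summed, or more directly by taking a trace: since $\bpi_{WV^\perp}\mathbf{f} = \sum_{i} \langle \mathbf{f}, \mathbf{v}_i\rangle \mathbf{w}_i = \sum_i \mathbf{w}_i\mathbf{v}_i^*\mathbf{f}$ for all $\mathbf{f}$, we have $\bpi_{WV^\perp} = \sum_{i=1}^N \mathbf{w}_i \mathbf{v}_i^*$ as operators on $\mathbb{C}^n$. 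Taking the trace of both sides and using $\trace(\mathbf{w}_i\mathbf{v}_i^*) = \mathbf{v}_i^*\mathbf{w}_i = \langle \mathbf{w}_i, \mathbf{v}_i\rangle$ gives
\begin{equation*}
    \sum_{i=1}^N \langle \mathbf{w}_i, \mathbf{v}_i\rangle = \trace(\bpi_{WV^\perp}) = \dim(W) = d_W,
\end{equation*}
where the middle equality holds because the trace of any projection (oblique or orthogonal) equals the dimension of its range.

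With this identity in hand, the inequality is immediate: by Cauchy--Schwarz on the vector $(\langle \mathbf{w}_1,\mathbf{v}_1\rangle, \dots, \langle \mathbf{w}_N, \mathbf{v}_N\rangle) \in \mathbb{C}^N$ against the all-ones vector,
\begin{equation*}
    d_W^2 = \left| \sum_{i=1}^N \langle \mathbf{w}_i, \mathbf{v}_i\rangle \right|^2 \leq \left( \sum_{i=1}^N |\langle \mathbf{w}_i, \mathbf{v}_i\rangle| \right)^2 \leq N \sum_{i=1}^N |\langle \mathbf{w}_i, \mathbf{v}_i\rangle|^2,
\end{equation*}
which rearranges to the claimed bound. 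For the equality characterization, equality in the second Cauchy--Schwarz step forces all $|\langle \mathbf{w}_i, \mathbf{v}_i\rangle|$ to be equal, say to a common value $c \geq 0$; equality in the first (triangle-inequality) step forces all the complex numbers $\langle \mathbf{w}_i, \mathbf{v}_i\rangle$ to have the same argument. Combined with $\sum_i \langle \mathbf{w}_i, \mathbf{v}_i\rangle = d_W > 0$, this argument must be $0$, so each $\langle \mathbf{w}_i, \mathbf{v}_i\rangle = c$ is a nonnegative real, and summing gives $Nc = d_W$, i.e. $\langle \mathbf{w}_i, \mathbf{v}_i\rangle = d_W/N$ for every $i$. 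Conversely, if $\langle \mathbf{w}_i, \mathbf{v}_i\rangle = d_W/N$ for all $i$ then $\sum_i |\langle\mathbf{w}_i,\mathbf{v}_i\rangle|^2 = N\cdot (d_W/N)^2 = d_W^2/N$, giving equality.

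The only genuine point requiring care — and the step I would expect to be the main obstacle — is justifying the operator identity $\bpi_{WV^\perp} = \sum_{i=1}^N \mathbf{w}_i \mathbf{v}_i^*$ and the trace computation $\trace(\bpi_{WV^\perp}) = d_W$ in the oblique (non-self-adjoint) setting. The first is exactly \cref{def:obliquedual}(2) rewritten in matrix notation, so it is available for free. For the trace: $\bpi_{WV^\perp}$ is idempotent with range $W$, so in a basis adapted to the decomposition $\mathbb{C}^n = W \oplus V^\perp$ it is represented by a block matrix that is the identity on the first $d_W$ coordinates; its trace is therefore $d_W$ regardless of the off-diagonal block. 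Alternatively one can cite that the trace of any idempotent equals its rank. No convergence issues arise since everything is finite-dimensional. I should also double-check the sign/conjugation convention: with $\langle \mathbf{x},\mathbf{y}\rangle = \mathbf{y}^*\mathbf{x}$ as fixed in the paper, $\trace(\mathbf{w}_i\mathbf{v}_i^*) = \mathbf{v}_i^*\mathbf{w}_i = \langle \mathbf{w}_i,\mathbf{v}_i\rangle$, which is consistent with what the potential in the statement uses.
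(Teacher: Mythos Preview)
Your proof is correct and follows essentially the same approach as the paper: establish $\sum_i \langle \mathbf{w}_i,\mathbf{v}_i\rangle = \trace(\bpi_{WV^\perp}) = d_W$ from the oblique-dual identity, then apply Cauchy--Schwarz against the all-ones vector. The only cosmetic difference is that the paper applies complex Cauchy--Schwarz in one step rather than splitting it into a triangle-inequality step followed by a real Cauchy--Schwarz step, which makes the equality analysis marginally shorter (equality forces the vector $(\langle \mathbf{w}_i,\mathbf{v}_i\rangle)_i$ to be a scalar multiple of the all-ones vector directly).
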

\begin{proof}
 Since $\{\mathbf{v}_i\}_{i=1}^N$ is an oblique dual frame of $\{\mathbf{w}_i\}_{i=1}^N$, then $\bpi_{WV^\perp} = \sum_{i=1}^N \mathbf{w}_i \mathbf{v}_i^*$.
    Therefore, 
     \begin{equation*}
       \sum_{i=1}^N \langle \mathbf{w}_i, \mathbf{v}_i \rangle =  \trace\left(\sum_{i=1}^N \mathbf{w}_i \mathbf{v}_i^*\right) = \trace(\bpi_{WV^\perp}) =d_W.
    \end{equation*}
 By the Cauchy–Schwarz inequality, we have
    \begin{equation*}
       \sum_{i=1}^N|\langle \mathbf{w}_i, \mathbf{v}_i \rangle |^2  \geq \frac{\abs{\sum_{i=1}^N \ip{\bw_i}{\bv_i}}^2}{N} = \frac{d_W^2}{N},
    \end{equation*}
    and the equality holds if and only if the vector $\left(\ip{\bw_i}{\bv_i}\right) _{i=1}^N$ is a scalar multiple of the all-ones vector, that is, there exists a constant $C$ such that $\langle \mathbf{w}_i, \mathbf{v}_i \rangle =C$ for each $i$. Computation gives $C =\frac{d_W}{N}$.
\end{proof}

\begin{proposition}\label{lemma:oblique_dual_potential}
    Let $\{\mathbf{w}_i\}_{i=1}^N$ be a frame for $W$ with frame operator $ \mathbf{S} = \sum_{i=1}^N \mathbf{w}_i\mathbf{w}_i^*$ and $\{\mathbf{v}_i\}_{i=1}^N$ an oblique dual frame of $\{\mathbf{w}_i\}_{i=1}^N$ on $V$, where $\mathbb{C}^n = W \oplus V^\perp$. Then 
    \begin{equation*}
         \sum_{i=1}^N \sum_{j=1}^N |\langle \mathbf{w}_i, \mathbf{v}_j \rangle |^2 \geq d_W,
    \end{equation*}
and the equality holds if and only if $\{\mathbf{v}_j\}_{j=1}^N$ is the canonical oblique dual $\{\bpi_{VW^\perp} \mathbf{S}^{\dagger} \mathbf{w}_j\}_{j=1}^N$. 
\end{proposition}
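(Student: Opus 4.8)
The plan is to view the double sum as the squared Frobenius norm of the mixed Gram matrix and to decompose an arbitrary oblique dual as ``canonical $+$ correction'', obtaining a Pythagorean identity in which the correction contributes a nonnegative term that vanishes exactly for the canonical dual. Concretely, I would write $T = [\bw_1 \mid \dots \mid \bw_N]$ for the synthesis matrix of $\{\bw_i\}_{i=1}^N$, so $\mathbf{S} = TT^*$, and let $R$, $R_{\mathrm{can}}$ be the synthesis matrices of $\{\bv_i\}_{i=1}^N$ and of the canonical oblique dual $\{\bv_i^{\mathrm{can}}\}_{i=1}^N := \{\bpi_{VW^\perp}\mathbf{S}^\dagger\bw_i\}_{i=1}^N$, so that $R_{\mathrm{can}} = \bpi_{VW^\perp}\mathbf{S}^\dagger T$ as matrices. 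Then $\sum_{i,j}\abs{\ip{\bw_i}{\bv_j}}^2 = \norm{R^*T}_F^2$. Both $R$ and $R_{\mathrm{can}}$ satisfy the oblique-dual operator identity $TR^* = \bpi_{WV^\perp}$: for $R$ this is condition $(2)$ of \cref{def:obliquedual}, and for $R_{\mathrm{can}}$ it follows (as in \cref{Oblique_Characterization}) from $TR_{\mathrm{can}}^* = \mathbf{S}\mathbf{S}^\dagger\bpi_{WV^\perp} = \bP_W\bpi_{WV^\perp} = \bpi_{WV^\perp}$, since $\range(\bpi_{WV^\perp}) = W$. Setting $\Delta := R - R_{\mathrm{can}}$ and subtracting, $T\Delta^* = \mathbf{0}$.

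With $R^*T = R_{\mathrm{can}}^*T + \Delta^*T$, the two computational steps are then: (i) the cross term $2\operatorname{Re}\trace(T^*R_{\mathrm{can}}\Delta^*T)$ vanishes, and (ii) $\norm{R_{\mathrm{can}}^*T}_F^2 = d_W$. For (i), cyclicity of the trace gives $\trace(T^*R_{\mathrm{can}}\Delta^*T) = \trace(R_{\mathrm{can}}\Delta^*\mathbf{S})$, and $R_{\mathrm{can}}\Delta^* = \bpi_{VW^\perp}\mathbf{S}^\dagger(T\Delta^*) = \mathbf{0}$, so the cross term is zero. For (ii), since the columns of $T$ lie in $W$ we have $\bpi_{WV^\perp}T = T$, hence $R_{\mathrm{can}}^*T = T^*\mathbf{S}^\dagger T$, which is Hermitian; then the Moore--Penrose identities $TT^* = \mathbf{S}$, $\mathbf{S}^\dagger\mathbf{S}\mathbf{S}^\dagger = \mathbf{S}^\dagger$, and $\mathbf{S}^\dagger\mathbf{S} = \bP_W$ (legitimate since $\range(\mathbf{S}) = W$) give $\norm{T^*\mathbf{S}^\dagger T}_F^2 = \trace(T^*\mathbf{S}^\dagger T T^*\mathbf{S}^\dagger T) = \trace(T^*\mathbf{S}^\dagger T) = \trace(\mathbf{S}^\dagger\mathbf{S}) = \trace(\bP_W) = d_W$. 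Combining (i) and (ii) yields
\begin{equation*}
    \sum_{i=1}^N\sum_{j=1}^N\abs{\ip{\bw_i}{\bv_j}}^2 = \norm{R^*T}_F^2 = d_W + \norm{\Delta^*T}_F^2 \geq d_W.
\end{equation*}

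For the equality case, $\norm{\Delta^*T}_F^2 = 0$ is equivalent to $T^*\Delta = \mathbf{0}$, i.e.\ each column $\mathbf{r}_j := \bv_j - \bv_j^{\mathrm{can}}$ of $\Delta$ is orthogonal to every $\bw_i$; since $\{\bw_i\}$ spans $W$ this means $\mathbf{r}_j \in W^\perp$. But $\mathbf{r}_j \in V$, and $\mathbb{C}^n = W\oplus V^\perp$ is equivalent to $\mathbb{C}^n = V\oplus W^\perp$ by \cref{lem:directsumangle}, so $V\cap W^\perp = \{\mathbf{0}\}$ and $\mathbf{r}_j = \mathbf{0}$ for all $j$; conversely, if $\bv_j = \bv_j^{\mathrm{can}}$ for all $j$ the potential equals $d_W$ by step (ii). I expect the only real friction to be bookkeeping — tracking which oblique projection is the adjoint of which (using $\bpi_{WV^\perp} = \bpi_{VW^\perp}^*$ and $\bpi_{WV^\perp}\bw_i = \bw_i$) and applying the pseudoinverse identities on $\range(\mathbf{S}) = W$ rather than on all of $\mathbb{C}^n$ — while the structural content is just that the correction is forced to be Frobenius-orthogonal to the canonical part and, lying in $V\cap W^\perp$, must vanish. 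Specializing to $V = W$, where $\bpi_{WV^\perp} = \bP_W$, recovers \cite[Theorem 2.2]{christensen2020equiangular}.
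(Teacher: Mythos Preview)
Your proof is correct and takes a genuinely different route from the paper's. The paper invokes an external result \cite[Proposition~4]{eldar2003sampling} stating that, for each fixed $\mathbf{f}$, the coefficient sequence $\{\langle\mathbf{f},\bpi_{VW^\perp}\mathbf{S}^\dagger\mathbf{w}_i\rangle\}_i$ has minimal $\ell^2$-norm among all $\{c_i\}$ with $\bpi_{WV^\perp}\mathbf{f}=\sum_i c_i\mathbf{w}_i$; it then applies this with $\mathbf{f}=\mathbf{w}_i$ and sums over $i$, computing the resulting lower bound via the same trace/Frobenius manipulations you use. Your argument instead works globally: decomposing $R=R_{\mathrm{can}}+\Delta$ and exploiting $T\Delta^*=\mathbf{0}$ to kill the cross term yields the exact Pythagorean identity $\norm{R^*T}_F^2=d_W+\norm{\Delta^*T}_F^2$ in one stroke. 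This is more self-contained (no external citation needed) and gives a sharper statement---an equality rather than an inequality plus a separate equality analysis---while the paper's route has the virtue of making explicit the link to the $\ell^2$-minimality characterization of the canonical oblique dual. Your handling of the equality case via $V\cap W^\perp=\{\mathbf{0}\}$ matches what the paper does elsewhere (e.g.\ in the proof of \cref{DualFramePotential}), and is cleaner than the paper's own argument here, which re-invokes the equality condition of the cited minimality result.
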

\begin{proof}
For any $\mathbf{f} \in \mathbb{C}^n$, \cite[Proposition 4]{eldar2003sampling} implies that, among all coefficients $\{c_i\}_{i=1}^N \in \ell^2(\{1, \dots, N\})$ for which $\bpi_{WV^\perp} \mathbf{f} = \sum\limits_{i=1}^N c_i \mathbf{w}_i$,
the coefficient sequence with the minimal $\ell^2$-norm is given by $\{\langle \mathbf{f}, \bpi_{VW^\perp} \mathbf{S}^{\dagger} \mathbf{w}_i \rangle\}_{i=1}^N$. In other words, 
    \begin{equation*}
        \sum_{i=1}^N |c_i|^2 \geq \sum_{i=1}^N |\langle \mathbf{f}, \bpi_{VW^\perp} \mathbf{S}^{\dagger} \mathbf{w}_i \rangle|^2
    \end{equation*}
and the equality holds if and only if $c_i = \langle \mathbf{f}, \bpi_{VW^\perp} \mathbf{S}^{\dagger} \mathbf{w}_i \rangle$ for each $i$. 
Since $\{\mathbf{v}_i\}_{i=1}^N$ is an oblique dual frame of $\{\mathbf{w}_i\}_{i=1}^N$ on $V$, then for any ${\bf f} \in \mathbb{C}^n$, 
$\bpi_{WV^\perp} \mathbf{f} = \sum\limits_{i=1}^N \langle \mathbf{f}, \mathbf{v}_i \rangle \mathbf{w}_i$. Therefore, 
\begin{equation}\label{eqn:minObliqueDual}
    \sum_{j=1}^N |\langle \mathbf{f}, \mathbf{v}_j \rangle|^2 \geq \sum_{j=1}^N |\langle \mathbf{f}, \bpi_{VW^\perp} \mathbf{S}^{\dagger} \mathbf{w}_j \rangle|^2
\end{equation}
and the equality holds if and only if $\mathbf{v}_j = \bpi_{VW^\perp} \mathbf{S}^{\dagger} \mathbf{w}_j$, for each $j$.

Suppose $\mathbf{W} = (\mathbf{w}_1, \mathbf{w}_2, \cdots, \mathbf{w}_N) \in \mathbb{C}^{n \times N}$ and then $\mathbf{S} = \mathbf{W} \mathbf{W}^*$.  Letting $\mathbf{f} = \mathbf{w}_i$ in the above equation and summing over $i$, we have
\begin{equation*}
\begin{split}
    \sum_{i=1}^N \sum_{j=1}^N |\langle \mathbf{w}_i, \mathbf{v}_j \rangle|^2 
    &\geq \sum_{i=1}^N \sum_{j=1}^N |\langle \mathbf{w}_i, \bpi_{VW^\perp} \mathbf{S}^{\dagger} \mathbf{w}_j \rangle|^2 = \sum_{i=1}^N \sum_{j=1}^N |\langle \bpi_{WV^\perp}\mathbf{w}_i, \mathbf{S}^{\dagger} \mathbf{w}_j \rangle|^2\\
    & = \sum_{i=1}^N \sum_{j=1}^N |\langle \mathbf{w}_i, \mathbf{S}^{\dagger} \mathbf{w}_j \rangle|^2 = \|\mathbf{W}^*\mathbf{S}^{\dagger}\mathbf{W}\|_{\text{Frob}}^2 \\
    &=\trace((\mathbf{W}^*\mathbf{S}^{\dagger}\mathbf{W})^*\mathbf{W}^*\mathbf{S}^{\dagger}\mathbf{W}) = \trace(\mathbf{W}^*\mathbf{S}^{\dagger}\mathbf{W}\mathbf{W}^*\mathbf{S}^{\dagger}\mathbf{W})\\
    &= \trace(\mathbf{W}^*\mathbf{S}^{\dagger}\mathbf{S}\mathbf{S}^{\dagger}\mathbf{W}) = \trace(\mathbf{W}\mathbf{W}^*\mathbf{S}^{\dagger}) =\trace(\mathbf{S}\mathbf{S}^{\dagger}) = \trace(\bP_W) \\
    &= d_W,
\end{split}
\end{equation*}
where $\|\cdot\|_{\text{Frob}}$ is the matrix Frobenius norm and the equalities are based on the cyclic property of matrix trace and the facts that $\bpi_{VW^\perp}^* = \bpi_{WV^\perp}$ and $\mathbf{S}\mathbf{S}^{\dagger} = \bP_W$. (See Lemma \ref{lemma:frame_projection} and consider $\mu := \frac{1}{N} \sum_{i=1}^N \delta_{\mathbf{w}_i}$.) 
The equality clearly holds when $\mathbf{v}_j = \bpi_{VW^\perp} \mathbf{S}^{\dagger} \mathbf{w}_j$ for each $j$. Conversely,  if the equality holds, then for each $\mathbf{w}_i$, we must have 
\begin{equation*}
    \sum_{j=1}^N |\langle \mathbf{w}_i, \mathbf{v}_j \rangle|^2 = \sum_{j=1}^N |\langle \mathbf{w}_i, \bpi_{VW^\perp} \mathbf{S}^{\dagger} \mathbf{w}_j \rangle|^2.
\end{equation*}
Then by the equality condition in \cref{eqn:minObliqueDual}, $\mathbf{v}_j = \bpi_{VW^\perp} \mathbf{S}^{\dagger} \mathbf{w}_j$, for each $j$.
\end{proof}

We now extend \cref{lem:diaggram} and \cref{lemma:oblique_dual_potential} from oblique dual $2$-frame potentials to oblique dual $2k$-frame potentials where $k \geq 1$.
\begin{corollary}\label{coro:p_Potential_No_Condition}
Let $\{\mathbf{w}_i\}_{i=1}^N$ be a frame for $W$ with frame operator $ \mathbf{S} = \sum_{i=1}^N \mathbf{w}_i\mathbf{w}_i^*$ and $\{\mathbf{v}_i\}_{i=1}^N$ an oblique dual frame of $\{\mathbf{w}_i\}_{i=1}^N$ on $V$, where $\mathbb{C}^n = W \oplus V^\perp$.
Assume additionally that $p=2k$, where $k \geq 1$. Then 
    \begin{equation*}
        \sum_{i=1}^N  |\langle \mathbf{w}_i, \mathbf{v}_i \rangle |^p \geq  N^{1-p}d_W^p \quad \textrm{and} \quad \sum_{i=1}^N \sum_{j=1}^N |\langle \mathbf{w}_i, \mathbf{v}_j \rangle |^p \geq  N^{2-p}d_W^{\frac{p}{2}}.
    \end{equation*}
Furthermore, when $k > 1$, the left-hand inequality is saturated if and only if  $\ip{\bw_i}{\bv_i}=\frac{d_W}{N}$ for all $i$, and the right-hand inequality is saturated if and only if $|\langle \mathbf{w}_i, \mathbf{v}_j \rangle |$ is constant for all $i$ and $j$ and  $\mathbf{v}_j = \bpi_{VW^\perp} \mathbf{S}^\dagger \mathbf{w}_j$ for each $j$.
\end{corollary}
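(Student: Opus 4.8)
The plan is to derive everything from convexity of $t \mapsto t^k$ on $[0,\infty)$ together with the two second-moment bounds already in hand: $\sum_{i} |\langle \mathbf{w}_i, \mathbf{v}_i\rangle|^2 \geq d_W^2/N$ from \cref{lem:diaggram} and $\sum_{i,j} |\langle \mathbf{w}_i, \mathbf{v}_j\rangle|^2 \geq d_W$ from \cref{lemma:oblique_dual_potential}. Set $a_i := |\langle \mathbf{w}_i,\mathbf{v}_i\rangle|^2$ and $b_{ij} := |\langle \mathbf{w}_i, \mathbf{v}_j\rangle|^2$, both nonnegative. First I would apply Jensen's inequality (equivalently, the power mean inequality) to the convex map $t \mapsto t^k$ with uniform weights, obtaining $\frac{1}{N} \sum_i a_i^k \geq \bigl(\frac{1}{N}\sum_i a_i\bigr)^k$, hence $\sum_i a_i^k \geq N^{1-k}\bigl(\sum_i a_i\bigr)^k$. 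Since $t\mapsto t^k$ is increasing, combining with \cref{lem:diaggram} gives $\sum_i |\langle \mathbf{w}_i,\mathbf{v}_i\rangle|^{2k} \geq N^{1-k}(d_W^2/N)^k = N^{1-2k}d_W^{2k}$, which is the claimed bound since $p = 2k$. The double-sum bound is identical with $N^2$ summands $b_{ij}$ and weights $1/N^2$: $\sum_{i,j} b_{ij}^k \geq N^{2(1-k)}\bigl(\sum_{i,j}b_{ij}\bigr)^k \geq N^{2-2k}d_W^k = N^{2-p}d_W^{p/2}$, using \cref{lemma:oblique_dual_potential}.

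For the equality clauses I would assume $k > 1$, so that $t\mapsto t^k$ is strictly convex on $[0,\infty)$ and Jensen's inequality is an equality precisely when all its arguments agree. For the diagonal potential, I would examine the chain $\sum_i a_i^k \geq N^{1-k}(\sum_i a_i)^k \geq N^{1-p}d_W^p$: overall equality forces equality in the right-hand step, i.e. $\sum_i a_i = d_W^2/N$, which by the equality clause of \cref{lem:diaggram} is equivalent to $\langle \mathbf{w}_i, \mathbf{v}_i\rangle = d_W/N$ for every $i$; this already makes all $a_i$ equal, so the Jensen step is then automatically an equality. The converse is immediate, giving the first characterization. For the double sum I would run the analogous chain $\sum_{i,j}b_{ij}^k \geq N^{2-2k}(\sum_{i,j}b_{ij})^k \geq N^{2-p}d_W^{p/2}$: equality in the right-hand step is, by the equality clause of \cref{lemma:oblique_dual_potential}, exactly the statement $\mathbf{v}_j = \bpi_{VW^\perp}\mathbf{S}^\dagger \mathbf{w}_j$ for all $j$, while equality in the strict-Jensen step is exactly constancy of $b_{ij}$, i.e. of $|\langle\mathbf{w}_i,\mathbf{v}_j\rangle|$, over all $i,j$; both are needed for overall equality, and together they suffice.

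This is a short argument, so I do not expect a serious obstacle; the only point needing care is the equality analysis for the double sum. Unlike the diagonal case, where pinning down $\sum_i a_i$ automatically makes the $a_i$ constant, here the canonical-oblique-dual condition does \emph{not} on its own force $|\langle \mathbf{w}_i, \mathbf{v}_j\rangle|$ to be constant (and, conversely, constancy of the off-diagonal moduli does not fix the value of $\sum_{i,j} b_{ij}$), so both conditions genuinely appear in the characterization and I would prove each implication separately. The remaining work is just bookkeeping with the exponents $p = 2k$.
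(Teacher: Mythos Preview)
Your proposal is correct and follows essentially the same route as the paper: Jensen's inequality for $t\mapsto t^k$ with uniform weights, followed by the second-moment bounds from \cref{lem:diaggram} and \cref{lemma:oblique_dual_potential}, with strict convexity handling the equality cases when $k>1$. Your equality analysis is slightly more detailed than the paper's (in particular, your observation that the diagonal condition $\langle\mathbf{w}_i,\mathbf{v}_i\rangle=d_W/N$ automatically saturates the Jensen step, while in the double-sum case both conditions are genuinely independent), but the underlying argument is the same.
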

\begin{proof}
We will just prove the right-hand inequality.  The proof for the left-hand inequality is almost identical but uses \cref{lem:diaggram} to analyze saturation. Since the map $|\cdot|^k: \mathbb{R} \rightarrow \mathbb{R}^+$ is convex,  Jensen's inequality and \cref{lemma:oblique_dual_potential} imply
\begin{align}
    \sum_{i=1}^N \sum_{j=1}^N |\langle \mathbf{w}_i, \mathbf{v}_j \rangle |^p &= N^2\sum_{i=1}^N \sum_{j=1}^N \frac{1}{N^2} |\langle \mathbf{w}_i, \mathbf{v}_j \rangle |^{2k}\nonumber\\
    &\geq  N^{2-2k}\left|\sum_{i=1}^N \sum_{j=1}^N |\langle \mathbf{w}_i, \mathbf{v}_j \rangle |^2 \right|^k\label{eqn:cor34a} \\
    &\geq  N^{2-2k}d_W^{k} = N^{2-p}d_W^{\frac{p}{2}}.\label{eqn:cor34b} 
\end{align}
When $k> 1$, the map $|\cdot|^k: \mathbb{R} \rightarrow \mathbb{R}^+$ is strictly convex. Then saturation of~\eqref{eqn:cor34a}  holds if and only if $|\langle \mathbf{w}_i, \mathbf{v}_j \rangle |$ is constant for any $i$ and $j$, and saturation of~\eqref{eqn:cor34b} follows from \cref{lemma:oblique_dual_potential}. 
\end{proof}

Our next theorem is motivated by \cite[Theorem 2.3 and Proposition 2.4]{christensen2020equiangular}, where the authors obtained a lower bound for the mixed coherence of dual frames. We generalize their results to the oblique dual frame setting and also polish the proof in the standard dual setting.
A family of unit-norm vectors $\{\mathbf{f}_i\}_{i=1}^N$ in $\mathbb{C}^n$ 
is an $(N,n)$-\emph{equiangular tight frame} (ETF) if $\{\mathbf{f}_i\}_{i=1}^N$ is tight and if there exists an $\alpha \ge 0$ such that for any $i \neq j$, $\absip{\bof_i}{\bof_j}^2=\alpha$.
For an arbitrary set of unit-norm vectors $\{\mathbf{f}_i\}_{i=1}^N$ in $\mathbb{C}^n$ where $N \geq n$, the \emph{coherence} is $\max_{i \neq j} |\langle \mathbf{f}_i, \mathbf{f}_j \rangle|^2$.  It has been shown (e.g., \cite{welch1974lower}) that the coherence satisfies
\begin{equation}\label{eq:welch}
    \max_{i \neq j} |\langle \mathbf{f}_i, \mathbf{f}_j \rangle|^2 
    \geq \frac{N-n}{n(N-1)},
\end{equation}
and the equality holds if and only if $\{\mathbf{f}_i\}_{i=1}^N$ is an ETF. The inequality in~\eqref{eq:welch} is known as the \emph{Welch-Rankin bound}.
ETFs have applications in coding theory \cite{massey1991welch}, communication systems \cite{strohmer2003grassmannian}, and quantum information processing \cite{zauner1999grundzuge, renes2004symmetric}. 
The construction and existence of ETFs have gained substantial attention \cite{sustik2007existence, holmes2004optimal,  bodmann2009equiangular, appleby2025constructive}, including the famous Zauner's Conjecture about the existence of an ETF consisting of $n^2$ vectors in $\mathbb{C}^n$ \cite{zauner1999grundzuge, appleby2025constructive}. In the following theorem, we show that the equality in the mixed coherence of oblique duals is saturated if and only if there exists an $(N, d_W)$-ETF and the oblique dual is canonical.

\begin{theorem}\label{thm:mixedCoherence}
     Suppose $W$ and $V$ are subspaces of $\mathbb{C}^n$ such that $\mathbb{C}^n = W \oplus V^\perp$. Let $\{\mathbf{w}_i\}_{i=1}^N$ be a frame for $W$ with frame operator $\mathbf{S}$ and $\{\mathbf{v}_i\}_{i=1}^N$ an oblique dual frame of $\{\mathbf{w}_i\}_{i=1}^N$ on $V$ such that $\langle \mathbf{w}_i, \mathbf{v}_i \rangle = \langle \mathbf{w}_j, \mathbf{v}_j \rangle$ for all $i$ and $j$. Then
    \begin{equation*}
       \underset{i \neq j}{\max} \ |\langle \mathbf{w}_i, \mathbf{v}_j \rangle |^2 \geq \frac{d_W(N-d_W)}{N^2(N-1)}.
    \end{equation*}
Furthermore, the equality holds if and only if any of the following equivalent conditions is true:
\begin{itemize}
    \item[$(1)$] For any $i \neq j$, $|\langle \mathbf{w}_i, \mathbf{v}_j \rangle |$ is constant, and $\mathbf{v}_j = \bpi_{VW^\perp} \mathbf{S}^\dagger \mathbf{w}_j$, for each $j$. 
    \item[$(2)$] The mixed Gram matrix $\mathbf{G} = (\langle \mathbf{w}_i, \mathbf{v}_j \rangle)_{ij}$ between $\{\mathbf{w}_i\}_{i=1}^N$ and $\{\mathbf{v}_i\}_{i=1}^N$ is
    \begin{equation*}
        \mathbf{G} =\frac{d_W}{N} \left(\mathbf{Id}_{N \times N} + \sqrt{\frac{N-d_W}{d_W(N-1)}}\mathbf{Q}\right),
    \end{equation*}
    where $\mathbf{Id}_{N \times N}$ is the identity matrix of size $N \times N$ and $\mathbf{Q}$ is a generalized signature matrix (self-adjoint, a zero diagonal and unimodular entries off the diagonal). 
    \item[$(3)$] For each $i$, define $
\boldsymbol{\psi}_i :=  \sqrt{\tfrac{N}{d_W}} \, (\mathbf{S}^\dagger)^{\frac{1}{2}} \mathbf{w}_i$.
Then $\{\boldsymbol{\psi}_i\}_{i=1}^N$ is an $(N,d_W)$- equiangular tight frame for $W$ and $\mathbf{v}_j = \bpi_{VW^\perp} \mathbf{S}^\dagger \mathbf{w}_j$, for each $j$.
    
\end{itemize}
\end{theorem}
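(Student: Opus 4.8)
The plan is to obtain the inequality almost for free from \cref{lem:diaggram} and \cref{lemma:oblique_dual_potential}, and then to spend the real effort on the equality analysis. Write $\mathbf{G} = (\langle\bw_i,\bv_j\rangle)_{i,j=1}^N$ for the mixed Gram matrix. Taking the trace of $\bpi_{WV^\perp} = \sum_i \bw_i\bv_i^*$ gives $\sum_i\langle\bw_i,\bv_i\rangle = \trace(\bpi_{WV^\perp}) = d_W$ (the computation already appearing in the proof of \cref{lem:diaggram}); together with the standing hypothesis that $\langle\bw_i,\bv_i\rangle$ is independent of $i$, this forces $\langle\bw_i,\bv_i\rangle = d_W/N$ for every $i$, so the diagonal contributes exactly $\sum_i|\langle\bw_i,\bv_i\rangle|^2 = d_W^2/N$ to $\|\mathbf{G}\|_{\text{Frob}}^2$. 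Since $\|\mathbf{G}\|_{\text{Frob}}^2 = \sum_{i,j}|\langle\bw_i,\bv_j\rangle|^2 \geq d_W$ by \cref{lemma:oblique_dual_potential}, the off-diagonal part satisfies $\sum_{i\neq j}|\langle\bw_i,\bv_j\rangle|^2 \geq d_W - d_W^2/N = d_W(N-d_W)/N$; being a sum of $N(N-1)$ terms, its maximum is at least its average, giving $\max_{i\neq j}|\langle\bw_i,\bv_j\rangle|^2 \geq \frac{d_W(N-d_W)}{N^2(N-1)}$, as claimed.

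Next I would read off the equality case. The chain ``$\max \geq$ average $\geq$ bound'' is an equality iff both steps are: the first iff $|\langle\bw_i,\bv_j\rangle|$ takes a common value over all $i\neq j$, and the second iff $\sum_{i,j}|\langle\bw_i,\bv_j\rangle|^2 = d_W$ exactly, which by the equality case of \cref{lemma:oblique_dual_potential} holds iff $\bv_j = \bpi_{VW^\perp}\mathbf{S}^\dagger\bw_j$ for each $j$. The conjunction of these two statements is exactly condition $(1)$, so equality holds if and only if $(1)$ holds; it then remains to prove $(1)\Leftrightarrow(2)\Leftrightarrow(3)$. For $(1)\Leftrightarrow(2)$: when $\bv_j$ is canonical one rewrites $\mathbf{G}_{ij} = \langle\bw_i,\bpi_{VW^\perp}\mathbf{S}^\dagger\bw_j\rangle = \langle\bpi_{WV^\perp}\bw_i,\mathbf{S}^\dagger\bw_j\rangle = \langle\bw_i,\mathbf{S}^\dagger\bw_j\rangle$, using $\bpi_{WV^\perp} = \bpi_{VW^\perp}^*$ and $\bpi_{WV^\perp}\bw_i = \bw_i$; since $\mathbf{S}^\dagger$ is self-adjoint, $\mathbf{G}$ is Hermitian, with diagonal $d_W/N$ and, in the equality case, all off-diagonal entries of modulus $c := \frac{d_W}{N}\sqrt{\frac{N-d_W}{d_W(N-1)}}$ — which is precisely the displayed form in $(2)$. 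Conversely, from the matrix in $(2)$ one computes $\|\mathbf{G}\|_{\text{Frob}}^2 = N(d_W/N)^2 + N(N-1)c^2 = d_W$, so \cref{lemma:oblique_dual_potential} forces $\bv_j$ to be canonical, while the unimodular off-diagonal entries of $\mathbf{Q}$ give the constant-modulus condition, yielding $(1)$.

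For $(1)\Leftrightarrow(3)$ I would use the auxiliary family $\boldsymbol{\psi}_i = \sqrt{N/d_W}\,(\mathbf{S}^\dagger)^{1/2}\bw_i$. By \cref{lemma:frame_projection}, $(\mathbf{S}^\dagger)^{1/2}\mathbf{S}(\mathbf{S}^\dagger)^{1/2} = \bP_W$, so $\sum_i\boldsymbol{\psi}_i\boldsymbol{\psi}_i^* = \frac{N}{d_W}\bP_W$: the $\boldsymbol{\psi}_i$ always form a tight frame for $W$ with bound $N/d_W$, and moreover $\langle\boldsymbol{\psi}_i,\boldsymbol{\psi}_j\rangle = \frac{N}{d_W}\langle\bw_i,\mathbf{S}^\dagger\bw_j\rangle$, i.e. $\frac{N}{d_W}$ times the canonical mixed Gram entry. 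If $(1)$ holds then $\bv_j$ is canonical, so $\|\boldsymbol{\psi}_i\|^2 = \frac{N}{d_W}\langle\bw_i,\bv_i\rangle = 1$ and $|\langle\boldsymbol{\psi}_i,\boldsymbol{\psi}_j\rangle| = \frac{N}{d_W}|\langle\bw_i,\bv_j\rangle|$ is constant for $i\neq j$; thus $\{\boldsymbol{\psi}_i\}$ is a unit-norm, tight, equiangular family, i.e. an $(N,d_W)$-ETF, and together with $\bv_j$ canonical this is $(3)$. Conversely $(3)$ gives $\bv_j$ canonical, hence $\mathbf{G}_{ij} = \frac{d_W}{N}\langle\boldsymbol{\psi}_i,\boldsymbol{\psi}_j\rangle$, and the equiangularity of the ETF forces $|\langle\bw_i,\bv_j\rangle|$ to be constant for $i\neq j$, which is $(1)$. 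As a sanity check, in the language of $(3)$ the inequality is just the Welch--Rankin bound~\eqref{eq:welch} applied to $\{\boldsymbol{\psi}_i\}$, which is saturated precisely by ETFs.

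The part I expect to demand the most care is the equality bookkeeping rather than any single computation: one must track that the standing constant-diagonal hypothesis together with the trace identity pins the diagonal of $\mathbf{G}$ to $d_W/N$, that the equality case of \cref{lemma:oblique_dual_potential} is correctly identified with ``the dual is canonical,'' and — for $(2)\Rightarrow(1)$ and for the unit-norm assertion inside $(3)$ — that the Frobenius-norm computation and the interlocking of ``$\bv_j$ canonical'' with ``$\boldsymbol{\psi}_i$ unit-norm'' close up consistently. The inequality itself, and the adjoint identities $\bpi_{WV^\perp} = \bpi_{VW^\perp}^*$ and $\bpi_{WV^\perp}\bw_i = \bw_i$, are routine.
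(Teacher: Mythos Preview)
Your proposal is correct and follows essentially the same route as the paper: the inequality via ``max $\geq$ average'' combined with \cref{lemma:oblique_dual_potential} and the pinned diagonal $d_W/N$, and the equality analysis via the saturation conditions of those two steps. The only cosmetic differences are that the paper chains the equivalences as $(1)\Leftrightarrow(2)\Leftrightarrow(3)$ rather than your $(1)\Leftrightarrow(2)$, $(1)\Leftrightarrow(3)$, and for $(2)\Rightarrow(1)$ the paper reads off $\max = \text{bound}$ directly from the form of $\mathbf{G}$ while you compute $\|\mathbf{G}\|_{\text{Frob}}^2 = d_W$ and invoke \cref{lemma:oblique_dual_potential}; both arguments are valid and equally short.
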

\begin{proof}
Since $\{\mathbf{v}_i\}_{i=1}^N$ is an oblique dual of $\{\mathbf{w}_i\}_{i=1}^N$ on $V$ and the $\langle \mathbf{w}_i, \mathbf{v}_i \rangle$ are constant, we have 
$\langle \mathbf{w}_i, \mathbf{v}_i \rangle = \frac{d_W}{N}$ for each $i$ since
\begin{equation*}
   \sum_{i=1}^N \langle \mathbf{w}_i, \mathbf{v}_i \rangle = \trace\left( \sum_{i=1}^N \mathbf{w}_i \mathbf{v}_i^*\right)=\trace(\bpi_{WV^\perp}) =d_W. 
\end{equation*}
Therefore, 
    \begin{equation*}
    \begin{split}
        \underset{i \neq j}{\text{max}} \ |\langle \mathbf{w}_i, \mathbf{v}_j \rangle |^2 &\geq \frac{1}{N(N-1)} \sum_{i \neq j} |\langle \mathbf{w}_i, \mathbf{v}_j \rangle |^2 \\
        &= \frac{1}{N(N-1)} \left( \sum_{i=1}^N \sum_{j=1}^N | \langle \mathbf{w}_i, \mathbf{v}_j \rangle |^2 - \sum_{i=1}^N |\langle \mathbf{w}_i, \mathbf{v}_i \rangle |^2 \right) \\
        & \geq \frac{1}{N(N-1)} \left( d_W - \frac{d_W^2}{N} \right) = \frac{d_W(N-d_W)}{N^2(N-1)},
    \end{split}
    \end{equation*}
where the last inequality follows from \cref{lemma:oblique_dual_potential} and $d_W \geq \frac{d_W^2}{N}$ (since $d_W \leq N$).
Hence, the equality holds if and only if $|\langle \mathbf{w}_i, \mathbf{v}_j \rangle |$ is constant for all $ i \neq j$ and (by the equality condition in \cref{lemma:oblique_dual_potential}) $\mathbf{v}_j = \bpi_{VW^\perp} \mathbf{S}^\dagger \mathbf{w}_j$ for each $j$, where $\mathbf{S} = \sum\limits_{i=1}^N \mathbf{w}_i\mathbf{w}_i^*$. Therefore, equality is equivalent to $(1)$.

In turn, $(1)$ implies $|\langle \mathbf{w}_i, \mathbf{v}_j \rangle| =  \frac{1}{N}\sqrt{\frac{d_W(N-d_W)}{N-1}}$ for any $i \neq j$, so the mixed Gram matrix $\mathbf{G}$ between $\{\mathbf{w}_i\}_{i=1}^N$ and $\{\mathbf{v}_i\}_{i=1}^N$ can be written as 
    \begin{equation*}
        \mathbf{G} =\frac{d_W}{N} \left(\mathbf{Id}_{N \times N} + \sqrt{\frac{N-d_W}{d_W(N-1)}}\mathbf{Q}\right),
    \end{equation*}
    where $\mathbf{Id}_{N \times N}$ is the identity matrix of size $N \times N$ and $\mathbf{Q}$ is a matrix with zero diagonal and unimodular entries off the diagonal. Further, $(1)$ yields that the $i,j$ entry of the mixed Gram is 
    \begin{align*}
    \lefteqn{\ip{\bw_i}{\bpi_{VW^\perp}\bS^\dagger \bw_j} = \ip{\bpi_{WV^\perp}\bw_i}{\bS^\dagger \bw_j} = \ip{\bw_i}{\bS^\dagger \bw_j}}\\
    &= \ip{(\bS^{1/2})^\dagger\bw_i}{(\bS^{1/2})^\dagger \bw_j} =\overline{\ip{(\bS^{1/2})^\dagger\bw_j}{(\bS^{1/2})^\dagger \bw_i}} =\overline{\ip{\bw_j}{\bpi_{VW^\perp}\bS^\dagger \bw_i}},
    \end{align*}
    so $\bQ$ is self-adjoint and $(1)$ implies  $(2)$. 
Conversely, if $\mathbf{G}$ is given as in $(2)$, then for any $i \neq j$, $ |\langle \mathbf{w}_i, \mathbf{v}_j \rangle| = \frac{d_W}{N}\sqrt{\frac{N-d_W}{d_W(N-1)}}$. Hence
\begin{equation*}
       \underset{i \neq j}{\text{max}} \ |\langle \mathbf{w}_i, \mathbf{v}_j \rangle |^2 = \frac{d_W(N-d_W)}{N^2(N-1)},
    \end{equation*}
    which implies $(1)$. 

Finally, let us show $(2)$ and $(3)$ are equivalent. Define matrices $\mathbf{V}=(\mathbf{v}_1, \cdots, \mathbf{v}_N)$, $\mathbf{W} = (\mathbf{w}_1, \cdots, \mathbf{w}_N)$, and $\boldsymbol{\Psi} = (\boldsymbol{\psi}_1, \cdots, \boldsymbol{\psi}_N)$, where $\boldsymbol{\Psi} = \sqrt{\tfrac{N}{d_W}} \, (\mathbf{S}^\dagger)^{\frac{1}{2}} \mathbf{W}$. Thus, $\boldsymbol{\Psi}^* \boldsymbol{\Psi} = \tfrac{N}{d_W}  \mathbf{W}^* \mathbf{S}^\dagger \mathbf{W}$. If $(2)$ holds, then the equality holds and thus $\mathbf{v}_j = \bpi_{VW^\perp} \mathbf{S}^\dagger \mathbf{w}_j$ for each $j$. Hence  $\mathbf{V} = \bpi_{V W^\perp} \mathbf{S}^{\dagger}\mathbf{W} $ and
\begin{equation*}
    \frac{d_W}{N} \left(\mathbf{I}_{N \times N} + \sqrt{\frac{N-d_W}{d_W(N-1)}}\mathbf{Q}\right) = \mathbf{G} = \mathbf{V}^*\mathbf{W}  = \mathbf{W}^* \mathbf{S}^{\dagger} (\bpi_{W V^\perp}\mathbf{W})  = \mathbf{W}^* \mathbf{S}^{\dagger}\mathbf{W}.
    \end{equation*}
Therefore, the Gram matrix of $\{\boldsymbol{\psi}_i\}_{i=1}^N $ is given by
\begin{equation}\label{eqn:psiETF}
    \boldsymbol{\Psi}^* \boldsymbol{\Psi} = \tfrac{N}{d_W}  \mathbf{W}^* \mathbf{S}^\dagger \mathbf{W} = \mathbf{Id}_{N \times N} + \sqrt{\frac{N-d_W}{d_W(N-1)}}\mathbf{Q}
\end{equation}
and the Welch bound saturation condition for \eqref{eq:welch} implies that $\{\boldsymbol{\psi}_i\}_{i=1}^N$ is an $(N,d_W)$-equiangular tight frame. 
Conversely, if $(3)$ holds, then $\{\boldsymbol{\psi}_i\}_{i=1}^N$ is an $(N,d_W)$-equiangular tight frame and its Gram matrix is given as \cref{eqn:psiETF}. Since $\mathbf{v}_j = \bpi_{VW^\perp} \mathbf{S}^\dagger \mathbf{w}_j$ for each $j$, then  $\mathbf{V} = \bpi_{V W^\perp} \mathbf{S}^{\dagger}\mathbf{W} $ and thus
\begin{equation*}
\begin{split}
     \mathbf{G} = \mathbf{V}^*\mathbf{W} & =  \mathbf{W}^*  \mathbf{S}^{\dagger} (\bpi_{W V^\perp}\mathbf{W}) = \mathbf{W}^* \mathbf{S}^{\dagger}\mathbf{W} \\
     & = \frac{d_W}{N} \boldsymbol{\Psi}^* \boldsymbol{\Psi} =  \frac{d_W}{N} \left(\mathbf{Id}_{N \times N} + \sqrt{\frac{N-d_W}{d_W(N-1)}}\mathbf{Q} \right).
\end{split}
\end{equation*}
\end{proof}

The last corollary is inspired by \cite[Theorem 26]{aceska2022cross} and \cref{lemma:oblique_dual_potential}. Note that the condition of $\langle \mathbf{w}_i, \mathbf{v}_i \rangle$ being constant for each $i$ in \cref{thm:mixedCoherence} and \cref{coro:pPotential} also holds in  \cref{example:paley}: for each $j$,  $\langle \delta_j, \phi_j \rangle = \phi(j-j) = \phi(0)$, where $\delta_j$ is the Dirac distribution at $j$ and $\phi_j(t) = \phi(t-j)$.  In addition, one of the equality conditions in \cref{thm:mixedCoherence} and \cref{coro:pPotential} makes the oblique dual frames $\{\mathbf{w}_i\}_{i=1}^N$ and $\{\mathbf{v}_i\}_{i=1}^N$ equiangular, that is, $|\langle \mathbf{w}_i, \mathbf{v}_j \rangle |$ is constant for any $i \neq j$.

\begin{corollary}\label{coro:pPotential}
Suppose $W$ and $V$ are subspaces of $\mathbb{C}^n$ such that $\mathbb{C}^n = W \oplus V^\perp$. Let $\{\mathbf{w}_i\}_{i=1}^N$ be a frame for $W$ with frame operator $\mathbf{S}$ and $\{\mathbf{v}_i\}_{i=1}^N$ an oblique dual frame of $\{\mathbf{w}_i\}_{i=1}^N$ on $V$ such that $\langle \mathbf{w}_i, \mathbf{v}_i \rangle = \langle \mathbf{w}_j, \mathbf{v}_j \rangle$ for all $i$ and $j$.
Assume additionally that $p=2k$ where $k \geq 1$. Then 
    \begin{equation*}
        \sum_{i=1}^N \sum_{j=1}^N |\langle \mathbf{w}_i, \mathbf{v}_j \rangle |^p \geq \frac{|d_W - \frac{d_W^2}{N} |^{\frac{p}{2}}}{N^{{\frac{p}{2}}-1}(N-1)^{\frac{p}{2}-1}} + \frac{d_W^p}{N^{p-1}}. 
    \end{equation*}
Furthermore, when $k > 1$, the equality holds if and only if for any $i \neq j$, $|\langle \mathbf{w}_i, \mathbf{v}_j \rangle |$ is constant, and $\mathbf{v}_j = \bpi_{VW^\perp} \mathbf{S}^\dagger \mathbf{w}_j$, for each $j$. 
\end{corollary}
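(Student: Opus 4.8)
The plan is to split the double sum into its diagonal part ($i=j$) and off-diagonal part ($i\neq j$) and estimate each separately, then recombine. For the diagonal, the hypothesis that $\langle\mathbf{w}_i,\mathbf{v}_i\rangle$ is independent of $i$, together with the trace identity
\[
\sum_{i=1}^N \langle\mathbf{w}_i,\mathbf{v}_i\rangle = \trace\Big(\sum_{i=1}^N \mathbf{w}_i\mathbf{v}_i^*\Big) = \trace(\bpi_{WV^\perp}) = d_W
\]
(exactly as at the start of the proof of \cref{thm:mixedCoherence}), forces $\langle\mathbf{w}_i,\mathbf{v}_i\rangle = d_W/N$ for every $i$. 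Hence the diagonal contribution is \emph{exactly} $\sum_{i=1}^N|\langle\mathbf{w}_i,\mathbf{v}_i\rangle|^p = N(d_W/N)^p = d_W^p/N^{p-1}$, which is the second term on the right-hand side.

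For the off-diagonal part I would apply Jensen's inequality to the convex map $t\mapsto |t|^k$ over the $N(N-1)$ terms $|\langle\mathbf{w}_i,\mathbf{v}_j\rangle|^2$ with $i\neq j$, just as in the proof of \cref{coro:p_Potential_No_Condition}, obtaining
\[
\sum_{i\neq j} |\langle\mathbf{w}_i,\mathbf{v}_j\rangle|^{2k} \;\geq\; \frac{1}{\big(N(N-1)\big)^{k-1}} \Big(\sum_{i\neq j} |\langle\mathbf{w}_i,\mathbf{v}_j\rangle|^2\Big)^{k}.
\]
Then I would use $\sum_{i\neq j}|\langle\mathbf{w}_i,\mathbf{v}_j\rangle|^2 = \sum_{i,j}|\langle\mathbf{w}_i,\mathbf{v}_j\rangle|^2 - \sum_i|\langle\mathbf{w}_i,\mathbf{v}_i\rangle|^2 \geq d_W - d_W^2/N$, where the inequality is \cref{lemma:oblique_dual_potential} and the subtracted diagonal is the exact value $d_W^2/N$ computed above. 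Since $d_W\leq N$ gives $d_W - d_W^2/N\geq 0$, taking $k$-th powers preserves the inequality; substituting $k=p/2$ and $\big(N(N-1)\big)^{k-1} = N^{p/2-1}(N-1)^{p/2-1}$ bounds the off-diagonal sum below by the first term on the right-hand side. Adding the two pieces yields the stated bound.

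For the equality characterization when $k>1$: here $t\mapsto|t|^k$ is strictly convex, so Jensen is saturated precisely when $|\langle\mathbf{w}_i,\mathbf{v}_j\rangle|$ is constant over all $i\neq j$; and because the diagonal contribution is pinned down exactly, saturating $\big(\sum_{i\neq j}|\langle\mathbf{w}_i,\mathbf{v}_j\rangle|^2\big)^k\geq (d_W-d_W^2/N)^k$ is equivalent to saturating $\sum_{i,j}|\langle\mathbf{w}_i,\mathbf{v}_j\rangle|^2\geq d_W$, which by the equality clause of \cref{lemma:oblique_dual_potential} holds exactly when $\mathbf{v}_j=\bpi_{VW^\perp}\mathbf{S}^\dagger\mathbf{w}_j$ for each $j$. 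The two conditions are jointly necessary and sufficient, and their compatibility is automatic. There is no real obstacle here: the only care needed is the bookkeeping of the two independent equality conditions, which decouple precisely because the hypothesis fixes the diagonal terms; one should also remark that for $k=1$ the right-hand side collapses to $d_W$ and the statement merely re-expresses \cref{lemma:oblique_dual_potential}, which is why the equality case is asserted only for $k>1$.
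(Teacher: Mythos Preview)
Your proposal is correct and mirrors the paper's proof essentially line for line: split into diagonal and off-diagonal parts, use the trace identity to pin the diagonal at $d_W^p/N^{p-1}$, apply Jensen for $t\mapsto|t|^k$ to the $N(N-1)$ off-diagonal terms, and then invoke \cref{lemma:oblique_dual_potential} on $\sum_{i,j}|\langle\mathbf{w}_i,\mathbf{v}_j\rangle|^2$; the equality analysis via strict convexity and the equality clause of \cref{lemma:oblique_dual_potential} is also the same.
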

\begin{proof}
By the proof of \cref{thm:mixedCoherence}, we know that 
$\langle \mathbf{w}_i, \mathbf{v}_i \rangle = \frac{d_W}{N}$ for each $i$. 
Since the map $|\cdot|^k: \mathbb{R} \rightarrow \mathbb{R}^+$ is convex, Jensen's inequality shows that
    \begin{align}
        \sum_{ i \neq j} |\langle \mathbf{w}_i, \mathbf{v}_j \rangle |^p &= N(N-1) \sum_{   i \neq j} \frac{1}{N(N-1)}|\langle \mathbf{w}_i, \mathbf{v}_j \rangle |^{2k}\nonumber\\
        &\geq  \frac{|\sum_{i \neq j} |\langle \mathbf{w}_i, \mathbf{v}_j \rangle|^2 |^{k}}{N^{k-1}(N-1)^{k-1}} \label{eqn:ppotineq}\\
        &= \frac{|\sum_{i=1}^N \sum_{j=1}^N |\langle \mathbf{w}_i, \mathbf{v}_j \rangle |^2 - \sum_{i=1}^N |\langle \mathbf{w}_i, \mathbf{v}_i \rangle|^2 |^{k}}{N^{k-1}(N-1)^{k-1}} \nonumber\\
         & = \frac{|\sum_{i=1}^N \sum_{j=1}^N |\langle \mathbf{w}_i, \mathbf{v}_j \rangle |^2 - \frac{d_W^2}{N} |^{\frac{p}{2}}}{N^{{\frac{p}{2}}-1}(N-1)^{\frac{p}{2}-1}}.\nonumber
    \end{align}
Thus, 
 \begin{align}
     \sum_{i=1}^N \sum_{j=1}^N |\langle \mathbf{w}_i, \mathbf{v}_j \rangle |^p  &=  \sum_{i \neq j} |\langle \mathbf{w}_i, \mathbf{v}_j \rangle |^p + \sum_{i=1}^N |\langle \mathbf{w}_i, \mathbf{v}_i \rangle|^p \nonumber\\
         &\geq  \frac{|\sum_{i=1}^N \sum_{j=1}^N |\langle \mathbf{w}_i, \mathbf{v}_j \rangle |^2 - \frac{d_W^2}{N} |^{\frac{p}{2}}}{N^{{\frac{p}{2}}-1}(N-1)^{\frac{p}{2}-1}} + \frac{d_W^p}{N^{p-1}} \label{eqn:ppotineq_all_index}\\
         &\geq \frac{|d_W - \frac{d_W^2}{N} |^{\frac{p}{2}}}{N^{{\frac{p}{2}}-1}(N-1)^{\frac{p}{2}-1}} + \frac{d_W^p}{N^{p-1}},\label{eqn:ppotineq_all_index2}
 \end{align}
 where~\eqref{eqn:ppotineq_all_index} follows from~\eqref{eqn:ppotineq} and the fact that $\langle \mathbf{w}_i, \mathbf{v}_i \rangle = \frac{d_W}{N}$ for each $i$, and~\eqref{eqn:ppotineq_all_index2} from \cref{lemma:oblique_dual_potential} and $d_W \geq \frac{d_W^2}{N}$.

When $k > 1$ the map $|\cdot|^k: \mathbb{R} \rightarrow \mathbb{R}^+$ is strictly convex, so equality in~\eqref{eqn:ppotineq} holds if and only if $|\langle \mathbf{w}_i, \mathbf{v}_j \rangle |$ is constant for any $i \neq j$. This then implies the equality condition for~\eqref{eqn:ppotineq_all_index}.  For saturation of~\eqref{eqn:ppotineq_all_index2}, we already have from \cref{thm:mixedCoherence} that $\mathbf{v}_j = \bpi_{VW^\perp} \mathbf{S}^\dagger \mathbf{w}_j$, for each $j$.
\end{proof}

We finish this section by comparing the constants in \cref{coro:p_Potential_No_Condition}, \cref{thm:mixedCoherence}, and \cref{coro:pPotential}. 
In \cref{coro:p_Potential_No_Condition}, if $p>2$ and the right-hand inequality is saturated, then computation shows that for any $ i$ and $j$, $ |\langle \mathbf{w}_i, \mathbf{v}_j \rangle| = \frac{\sqrt{d_W}}{N}$.
By the proof of \cref{thm:mixedCoherence} and \cref{coro:pPotential}, we know that if for each $i$,  $\langle \mathbf{w}_i, \mathbf{v}_i \rangle$ is constant, then $ \langle \mathbf{w}_i, \mathbf{v}_i \rangle =  \frac{d_W}{N}$ for each $i$,
and when the equality holds in \cref{thm:mixedCoherence}, its proof shows that
$$|\langle \mathbf{w}_i, \mathbf{v}_j \rangle | = \frac{d_W}{N}\sqrt{\frac{N-d_W}{d_W(N-1)}}, \ \text{for any} \ i \neq j,$$
and similarly for \cref{coro:pPotential}.

\section{Oblique Dual Probabilistic Frames}\label{section:probabilisticdual}
In this section, we introduce oblique dual probabilistic frames, which generalizes oblique dual frames into the probabilistic frame setting. Throughout the remaining sections, $\mathcal{H}$ is the Euclidean space $\mathbb{R}^n$ and $W$ and $V$ are subspaces such that $\mathbb{R}^n = W \oplus V^\perp$. From now on, the inner product is the standard dot product in $\mathbb{R}^n$. We use $(\cdot)^t$ to denote the transpose of a vector or matrix and $d_W$ the dimension of $W$ where $1 \leq d_W \leq n$.
We first give a characterization motivated by the characterization of oblique duals in \cite[Lemma 3.1]{christensen2004oblique}.
\begin{lemma}\label{lemma:obliqueDualEquiv}
    Suppose $\mathbb{R}^n = W \oplus V^\perp$. Let $\mu \in \mathcal{P}_2(W) $ and $\nu \in \mathcal{P}_2(V)$ be Bessel probability measures with bounds $B_\mu>0$ and $B_\nu>0$, respectively, and $\gamma \in \Gamma(\mu, \nu)$. Then the following are equivalent:
\begin{enumerate}
    \item For any $\mathbf{f} \in W$, $\mathbf{f} = \int_{W \times V} \mathbf{x} \langle \mathbf{y}, \mathbf{f} \rangle d\gamma(\mathbf{x}, \mathbf{y})$. 

    \item For any $\mathbf{f} \in \mathbb{R}^n$, 
   $\bpi_{WV^\perp}\mathbf{f} = \int_{W \times V} \mathbf{x} \langle \mathbf{y}, \mathbf{f} \rangle d\gamma(\mathbf{x}, \mathbf{y})$. 
    
        \item For any $\mathbf{f} \in \mathbb{R}^n$, $\bpi_{VW^\perp}\mathbf{f} = \int_{W \times V} \langle \mathbf{x}, \mathbf{f} \rangle \mathbf{y} d\gamma(\mathbf{x}, \mathbf{y})$. 

    \item For any $\mathbf{f}, \mathbf{g} \in \mathbb{R}^n$, $\langle \bpi_{WV^\perp}\mathbf{f}, \mathbf{g} \rangle = \int_{W \times V} \langle \mathbf{x}, \mathbf{g} \rangle \langle \mathbf{y}, \mathbf{f} \rangle d\gamma(\mathbf{x}, \mathbf{y})$. 

    \item For any $\mathbf{f}, \mathbf{g} \in \mathbb{R}^n$, $\langle \bpi_{VW^\perp}\mathbf{f}, \mathbf{g} \rangle = \int_{W \times V} \langle \mathbf{x}, \mathbf{f} \rangle \langle \mathbf{y}, \mathbf{g} \rangle d\gamma(\mathbf{x}, \mathbf{y})$. 
\end{enumerate}
If any of the equivalent conditions is satisfied, then $\mu$ and $\nu$ are probabilistic frames for $W$ and $V$ with lower bounds $\frac{1}{B_\nu}$ and  $\frac{1}{B_\mu}$, respectively. Furthermore, $\mu$ and ${\bP_W}_\#\nu$ are dual frames for $W$, and ${\bP_V}_\#\mu$ and $\nu$ are dual frames for $V$.
\end{lemma}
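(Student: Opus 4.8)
The plan is to organize everything around the single matrix $T := \int_{W\times V}\mathbf{x}\mathbf{y}^t\, d\gamma(\mathbf{x},\mathbf{y})$, which is well defined because $\gamma\in\Gamma(\mu,\nu)\subset\mathcal{P}_2(W\times V)$. For every $\mathbf{f},\mathbf{g}\in\mathbb{R}^n$ one has $T\mathbf{f} = \int\mathbf{x}\langle\mathbf{y},\mathbf{f}\rangle\, d\gamma$, $T^t\mathbf{f} = \int\langle\mathbf{x},\mathbf{f}\rangle\mathbf{y}\, d\gamma$, and $\langle T\mathbf{f},\mathbf{g}\rangle = \int\langle\mathbf{x},\mathbf{g}\rangle\langle\mathbf{y},\mathbf{f}\rangle\, d\gamma$. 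So condition (2) is exactly $T = \bpi_{WV^\perp}$, condition (4) is its bilinear restatement $\langle T\mathbf{f},\mathbf{g}\rangle = \langle\bpi_{WV^\perp}\mathbf{f},\mathbf{g}\rangle$ for all $\mathbf{f},\mathbf{g}$, and these are visibly equivalent; likewise (3) is $T^t = \bpi_{VW^\perp}$ and (5) its bilinear restatement, so (3)$\Leftrightarrow$(5); and since $\bpi_{WV^\perp} = (\bpi_{VW^\perp})^t$, transposing shows (2)$\Leftrightarrow$(3). To close the loop I would connect (1) and (2): (2) specialized to $\mathbf{f}\in W$ is (1) because $\bpi_{WV^\perp}$ fixes $W$ pointwise; conversely, (1) gives $T\mathbf{f}=\mathbf{f}$ for $\mathbf{f}\in W$, while for $\mathbf{f}\in V^\perp$ we get $\langle\mathbf{y},\mathbf{f}\rangle=0$ for $\gamma$-a.e.\ $(\mathbf{x},\mathbf{y})$ (since $\mathbf{y}\in V$ $\gamma$-a.s.), hence $T\mathbf{f}=\mathbf{0}$, and as $\mathbb{R}^n=W\oplus V^\perp$ this forces $T=\bpi_{WV^\perp}$.

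For the ``furthermore'' part I would first record a small but essential observation: the Bessel bounds extend off the subspaces. Since $\nu$ is supported on $V$, for any $\mathbf{x}\in\mathbb{R}^n$ we have $\int|\langle\mathbf{x},\mathbf{y}\rangle|^2\,d\nu(\mathbf{y}) = \int|\langle\bP_V\mathbf{x},\mathbf{y}\rangle|^2\,d\nu(\mathbf{y})\le B_\nu\|\bP_V\mathbf{x}\|^2\le B_\nu\|\mathbf{x}\|^2$, and symmetrically for $\mu$ with $B_\mu$. Then, for $\mathbf{f}\in W$, condition (1) together with Cauchy--Schwarz in $L^2(\gamma)$ and the marginal identities gives
\begin{align*}
\|\mathbf{f}\|^2 = \int_{W\times V}\langle\mathbf{x},\mathbf{f}\rangle\langle\mathbf{y},\mathbf{f}\rangle\,d\gamma
&\le \Big(\int_W|\langle\mathbf{x},\mathbf{f}\rangle|^2\,d\mu\Big)^{1/2}\Big(\int_V|\langle\mathbf{y},\mathbf{f}\rangle|^2\,d\nu\Big)^{1/2}\\
&\le \sqrt{B_\nu}\,\|\mathbf{f}\|\,\Big(\int_W|\langle\mathbf{x},\mathbf{f}\rangle|^2\,d\mu\Big)^{1/2},
\end{align*}
so $\tfrac{1}{B_\nu}\|\mathbf{f}\|^2\le\int_W|\langle\mathbf{x},\mathbf{f}\rangle|^2\,d\mu(\mathbf{x})$, which with the Bessel bound $B_\mu$ exhibits $\mu$ as a probabilistic frame for $W$ with lower bound $\tfrac1{B_\nu}$. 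Running the same argument with (3) in place of (1) (so that $\mathbf{f}=\int\langle\mathbf{x},\mathbf{f}\rangle\mathbf{y}\,d\gamma$ for $\mathbf{f}\in V$) and the roles of $\mu,\nu$ exchanged gives $\nu$ as a probabilistic frame for $V$ with lower bound $\tfrac1{B_\mu}$.

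For the dual-frame assertions I would transport $\gamma$ along coordinate projections. Let $\gamma'$ be the pushforward of $\gamma$ under $(\mathbf{x},\mathbf{y})\mapsto(\mathbf{x},\bP_W\mathbf{y})$; its marginals are $\mu$ and ${\bP_W}_\#\nu$, so $\gamma'\in\Gamma(\mu,{\bP_W}_\#\nu)\subset\mathcal{P}(W\times W)$, and change of variables together with condition (2) gives
\[
\int_{W\times W}\mathbf{x}\mathbf{z}^t\,d\gamma'(\mathbf{x},\mathbf{z}) = \int_{W\times V}\mathbf{x}(\bP_W\mathbf{y})^t\,d\gamma = T\bP_W = \bpi_{WV^\perp}\bP_W = \bP_W,
\]
where the last equality is the transpose of the identity $\bP_W\bpi_{VW^\perp}=\bP_W$ from \cref{lemma:oblique_proj_identity}. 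By \cref{dualDefinition} this means ${\bP_W}_\#\nu$ is a dual probabilistic frame of $\mu$ on $W$; since a Cauchy--Schwarz estimate like the one above (now applied to $\gamma'$) also makes ${\bP_W}_\#\nu$ a probabilistic frame for $W$, the measures $\mu$ and ${\bP_W}_\#\nu$ are dual frames for $W$. Symmetrically, pushing $\gamma$ forward under $(\mathbf{x},\mathbf{y})\mapsto(\mathbf{y},\bP_V\mathbf{x})$ produces $\gamma''\in\Gamma(\nu,{\bP_V}_\#\mu)$ with $\int_{V\times V}\mathbf{y}\mathbf{z}^t\,d\gamma'' = T^t\bP_V = \bpi_{VW^\perp}\bP_V = \bP_V$, where I use \cref{lemma:oblique_proj_identity} with $W$ and $V$ interchanged (legitimate since $\mathbb{R}^n=V\oplus W^\perp$ by \cref{lem:directsumangle}) and a transpose; hence ${\bP_V}_\#\mu$ and $\nu$ are dual frames for $V$.

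Most of this is bookkeeping with transposes, marginals, and change of variables. The two steps I expect to carry the real weight are the simplifications $T\bP_W=\bP_W$ and $T^t\bP_V=\bP_V$, which are exactly where the oblique-projection identities of \cref{lemma:oblique_proj_identity} and the direct-sum hypothesis $\mathbb{R}^n=W\oplus V^\perp$ (and its partner $\mathbb{R}^n=V\oplus W^\perp$) enter; the other place to be careful---easy to skip over---is extending the Bessel estimates from the subspaces to all of $\mathbb{R}^n$ via the support conditions, since without that the Cauchy--Schwarz step would not deliver the stated lower bounds $\tfrac1{B_\nu}$ and $\tfrac1{B_\mu}$.
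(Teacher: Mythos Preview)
Your proof is correct and follows essentially the same architecture as the paper's: reduce everything to the matrix identity $\int\mathbf{x}\mathbf{y}^t\,d\gamma=\bpi_{WV^\perp}$, get the lower frame bounds by Cauchy--Schwarz, and get the dual-frame statements by pushing $\gamma$ forward along a coordinate projection. Two execution details differ. For $(1)\Rightarrow(2)$ the paper applies $(1)$ to the vector $\bpi_{WV^\perp}\mathbf{f}\in W$ and then moves $\bpi_{WV^\perp}$ across the inner product using $\bpi_{VW^\perp}\mathbf{y}=\mathbf{y}$, whereas you check $T$ on the two summands $W$ and $V^\perp$ separately; both are one-liners. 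For the dual-frame part the paper just observes that $\langle\mathbf{y},\mathbf{f}\rangle=\langle\bP_W\mathbf{y},\mathbf{f}\rangle$ when $\mathbf{f}\in W$, while you compute the matrix product $T\bP_W=\bpi_{WV^\perp}\bP_W=\bP_W$ via \cref{lemma:oblique_proj_identity}; your route is arguably cleaner since it delivers the full matrix identity at once and makes explicit where the oblique-projection lemma enters. You are also more careful than the paper in spelling out why the Bessel bound $B_\nu$ applies to vectors outside $V$.
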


\begin{proof}
Note that  $(2)$ implies $(1)$ trivially. To see that $(1)$ implies $(2)$, suppose $(1)$ holds and let $\textbf{f} \in \mathbb{R}^n$. Then $\bpi_{WV^\perp}\mathbf{f} \in W$, so 
    \begin{equation*}
    \begin{split}
        \bpi_{WV^\perp}\mathbf{f} &= \int_{W \times V} \mathbf{x} \langle \mathbf{y}, \bpi_{WV^\perp} \mathbf{f} \rangle d\gamma(\mathbf{x}, \mathbf{y}) = \int_{W \times V} \mathbf{x} \langle \bpi_{WV^\perp}^* \mathbf{y},  \mathbf{f} \rangle d\gamma(\mathbf{x}, \mathbf{y}) \\
        &= \int_{W \times V} \mathbf{x} \langle \bpi_{VW^\perp} \mathbf{y},  \mathbf{f} \rangle d\gamma(\mathbf{x}, \mathbf{y}) = \int_{W \times V} \mathbf{x} \langle \mathbf{y},  \mathbf{f} \rangle d\gamma(\mathbf{x}, \mathbf{y}).
    \end{split}
    \end{equation*}
    
In addition, $(2)$ is equivalent to $ \bpi_{WV^\perp} = \int_{W \times V} \mathbf{x} \mathbf{y}^t d\gamma(\mathbf{x}, \mathbf{y})$. 
Taking the adjoint on both sides leads to 
\begin{equation*}
       \bpi_{VW^\perp} = \bpi_{WV^\perp}^* = \int_{W \times V} \mathbf{y} \mathbf{x}^t d\gamma(\mathbf{x}, \mathbf{y}),
    \end{equation*}
which is equivalent to $(3)$. 

Next, $(2)$ implies $(4)$ trivially. On the other hand, if $(4)$ is true, then for any $\mathbf{f}, \mathbf{g} \in \mathbb{R}^n, $
\begin{equation*}
    \left\langle  \bpi_{WV^\perp}\mathbf{f} - \int_{W \times V} \mathbf{x} \langle \mathbf{y}, \mathbf{f} \rangle d\gamma(\mathbf{x}, \mathbf{y}) , \mathbf{g} \right\rangle = 0, 
\end{equation*}
which implies $(2)$. $(3)$ and $(5)$ are equivalent in a similar way. 

If any of the equivalent conditions is satisfied, then $(4)$ shows that for any $\mathbf{f} \in W$, 
    \begin{equation*}
    \begin{split}
         \| \mathbf{f}\|^4 = \left|\int_{W \times V} \langle \mathbf{x}, \mathbf{f} \rangle \langle \mathbf{y}, \mathbf{f} \rangle d\gamma(\mathbf{x}, \mathbf{y}) \right|^{2}
         &\leq \int_{W} |\langle \mathbf{x}, \mathbf{f} \rangle|^2 d\mu(\mathbf{x}) \int_{V} |\langle \mathbf{y}, \mathbf{f} \rangle|^2 d\nu(\mathbf{y}) \\
     &\leq B_\nu \|\mathbf{f}\|^2 \int_{W} |\langle \mathbf{x}, \mathbf{f} \rangle|^2 d\mu(\mathbf{x}).
    \end{split}
    \end{equation*}
 Therefore, $\mu$ is a probabilistic frame for $W$ with lower bound $\frac{1}{B_\nu}$ and upper bound $M_2(\mu)$. Similarly, $(5)$ shows that $\nu$ is a probabilistic frame for $V$ with lower bound $\frac{1}{B_\mu}$ and upper bound $M_2(\nu)$.
Moreover, $(1)$ tells us that for any $ \mathbf{f} \in W$, 
\begin{equation*}
        \mathbf{f} =  \int_{W \times V} \mathbf{x} \langle \bP_W \mathbf{y}, \mathbf{f} \rangle d\gamma(\mathbf{x}, \mathbf{y}) = \int_{W \times W} \mathbf{x} \langle \mathbf{y}, \mathbf{f} \rangle d\tilde{\gamma}(\mathbf{x}, \mathbf{y}) 
    \end{equation*}
where $\tilde{\gamma}:=(\mathbf{Id}, \bP_W)_\#\gamma \in \Gamma(\mu, {\bP_W}_\#\nu)$.
Therefore, $\mu$ and ${\bP_W}_\#\nu$ are dual frames for $W$. Similarly, one can use $(3)$ to show that  ${\bP_V}_\#\mu$ and $\nu$ are dual frames for $V$ with respect to the coupling $(\bP_V, \mathbf{Id})_\#\gamma \in \Gamma({\bP_V}_\#\mu, \nu)$.
\end{proof}

We give the following definition for oblique dual probabilistic frames. 

\begin{definition}
    Suppose $\mathbb{R}^n = W \oplus V^\perp$ and $\mu \in \mathcal{P}_2(W)$. Then $\nu \in \mathcal{P}_2(V)$ is called an \textit{oblique dual probabilistic frame} of $\mu$ on $V$ if there exists $\gamma \in \Gamma(\mu, \nu)$ such that
    \begin{equation*}
       \bpi_{WV^\perp} = \int_{W \times V} \mathbf{x} \mathbf{y}^t d\gamma(\mathbf{x}, \mathbf{y}).
    \end{equation*}
\end{definition}

Of course, we could have used any of the equivalent conditions from \cref{lemma:obliqueDualEquiv} in this definition.

In what follows, we frequently refer to an oblique dual probabilistic frame as an oblique dual frame or simply an oblique dual. 
In particular, if $T: W \rightarrow V$ is measurable and $T_\#\mu$ is an oblique dual of $\mu$ with respect to $(\mathbf{Id}, T)_\#\mu \in \Gamma(\mu, T_\#\mu)$, then $T_\#\mu$ is called an oblique dual probabilistic frame of \emph{pushforward type}. 

Oblique duals of pushforward type include all oblique duals for finite frames. To see this, suppose $\mu = \frac{1}{N} \sum_{i=1}^N \delta_{\mathbf{w}_i}$ is a probabilistic frame for $W$, $T: W \to V$ is measurable, and 
\[
    \nu = T_\#\mu = \frac{1}{N} \sum\limits_{i=1}^N \delta_{T(\mathbf{w}_i)}
\]
is a probabilistic frame for $V$. If $\nu$ is an oblique dual of $\mu$ with respect to $(\mathbf{Id}, T)_\#\mu$, then 
\begin{equation*}
    \bpi_{WV^\perp} = \int_{W} \mathbf{x} \left(T(\mathbf{x})\right)^t d\mu(\mathbf{x}) =  \sum_{i=1}^N \mathbf{w}_{i} \left(\frac{1}{N}T(\mathbf{w}_i)\right)^t,
\end{equation*}
which implies that $\{\frac{1}{N}T(\mathbf{w}_i)\}_{i=1}^N$ is an oblique dual frame of $\{\mathbf{w}_i\}_{i=1}^N$ on $V$. Under this interpretation, we still need the direct sum structure $\mathbb{R}^n = W \oplus V^\perp$ for the consistent reconstruction of oblique dual frames. Furthermore, for any signal $\mathbf{f} \in \mathbb{R}^n$, if we denote the reconstructed signal $\hat{\mathbf{f}}$ as
\begin{equation*}
    \hat{\mathbf{f}} = \int_{W \times V} \mathbf{x} \langle \mathbf{y, f} \rangle d\gamma(\mathbf{x}, \mathbf{y}) = \frac{1}{N} \sum_{i=1}^N \langle  \mathbf{f}, T(\mathbf{w}_i)  \rangle\mathbf{w}_{i}, 
\end{equation*}
then we must have $\bpi_{WV^\perp} \mathbf{f} = \hat{\mathbf{f}}$, equivalent to the consistent reconstruction by \cref{thm:Oblique_dual_characterization}. 
Similar to \cref{def:consistRecons}, we have the following definition for probabilistic consistent reconstruction, which characterizes oblique dual probabilistic frames.  

\begin{definition}\label{def:probabiConsistRecons}
Let $\mu$ and $\nu$ be probabilistic frames for $W$ and $V$ respectively, where $\mathbb{R}^n = W \oplus V^\perp$. Then $\mu$ and $\nu$ are said to perform \emph{probabilistic consistent reconstruction} if there exists $\gamma \in \Gamma(\mu, \nu)$ such that 
for any $\mathbf{f} \in \mathbb{R}^n$,  $\langle \mathbf{f}, \mathbf{z}  \rangle = \langle \hat{\mathbf{f}},  \mathbf{z} \rangle$  for $\nu$-almost all $\mathbf{z} \in V$, where $\hat{\mathbf{f}}$ is the reconstructed signal given by
    \begin{equation*}
        \hat{\mathbf{f}} = \int_{W \times V} \mathbf{x} \langle \mathbf{y, f} \rangle d\gamma(\mathbf{x}, \mathbf{y}).
    \end{equation*}
\end{definition}

\begin{theorem}
  Let $\mu$ and $\nu$ be probabilistic frames for $W$ and $V$ respectively, where $\mathbb{R}^n = W \oplus V^\perp$. Then $\mu$ and $\nu$ perform probabilistic consistent reconstruction if and only if $\nu$ is an oblique dual probabilistic frame of $\mu$ on $V$. 
\end{theorem}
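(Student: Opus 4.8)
The plan is to reduce the whole statement to a single matrix identity. For a coupling $\gamma \in \Gamma(\mu,\nu)$ I would set $\mathbf{M}_\gamma := \int_{W \times V} \mathbf{x}\mathbf{y}^t\, d\gamma(\mathbf{x},\mathbf{y})$, which is well-defined by Cauchy--Schwarz since $\mu \in \mathcal{P}_2(W)$ and $\nu \in \mathcal{P}_2(V)$. Then the reconstructed signal is exactly $\hat{\mathbf{f}} = \mathbf{M}_\gamma\mathbf{f}$, and the statement ``$\nu$ is an oblique dual of $\mu$ on $V$ via $\gamma$'' means precisely $\mathbf{M}_\gamma = \bpi_{WV^\perp}$. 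So the task becomes: with the same $\gamma$ on both sides, $\langle\mathbf{f},\mathbf{z}\rangle = \langle\mathbf{M}_\gamma\mathbf{f},\mathbf{z}\rangle$ for all $\mathbf{f}\in\mathbb{R}^n$ and $\nu$-almost all $\mathbf{z}$ if and only if $\mathbf{M}_\gamma = \bpi_{WV^\perp}$. This is the probabilistic transcription of the equivalence of conditions (1) and (2) in \cref{def:obliquedual} together with \cref{thm:Oblique_dual_characterization}.

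For the ``if'' direction I would assume $\mathbf{M}_\gamma = \bpi_{WV^\perp}$ and compute, for any $\mathbf{f} \in \mathbb{R}^n$ and any $\mathbf{z} \in V$, $\langle\hat{\mathbf{f}},\mathbf{z}\rangle = \langle\bpi_{WV^\perp}\mathbf{f},\mathbf{z}\rangle = \langle\mathbf{f},\bpi_{VW^\perp}\mathbf{z}\rangle = \langle\mathbf{f},\mathbf{z}\rangle$, using $\bpi_{WV^\perp}^* = \bpi_{VW^\perp}$ and $\bpi_{VW^\perp}\mathbf{z} = \mathbf{z}$ for $\mathbf{z}\in V$. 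Since $\supp(\nu)\subseteq V$, this holds $\nu$-a.e.\ (indeed pointwise on $V$), so $\mu$ and $\nu$ perform probabilistic consistent reconstruction.

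For the ``only if'' direction I would start from two structural facts about $\mathbf{M}_\gamma$ that hold for free: $\range(\mathbf{M}_\gamma)\subseteq W$ (since $\mathbf{M}_\gamma\mathbf{f} = \int\mathbf{x}\langle\mathbf{y},\mathbf{f}\rangle\,d\gamma$ is an average of vectors in the closed subspace $W$), and $V^\perp\subseteq\kernel(\mathbf{M}_\gamma)$ (since $\langle\mathbf{y},\mathbf{g}\rangle = 0$ for $\mathbf{y}\in V$, $\mathbf{g}\in V^\perp$). The hypothesis says $\langle(\mathbf{Id}-\mathbf{M}_\gamma)\mathbf{f},\mathbf{z}\rangle = 0$ for $\nu$-a.e.\ $\mathbf{z}$; I would then invoke the lower probabilistic frame bound of $\nu$, which after the usual reduction $\int_V|\langle\mathbf{h},\mathbf{z}\rangle|^2\,d\nu(\mathbf{z}) = \int_V|\langle\bP_V\mathbf{h},\mathbf{z}\rangle|^2\,d\nu(\mathbf{z})\geq A\|\bP_V\mathbf{h}\|^2$ applies to every $\mathbf{h}\in\mathbb{R}^n$, to conclude $(\mathbf{Id}-\mathbf{M}_\gamma)\mathbf{f}\in V^\perp$ for all $\mathbf{f}$. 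Restricting to $\mathbf{f}\in W$ and using $\mathbf{M}_\gamma\mathbf{f}\in W$ gives $(\mathbf{Id}-\mathbf{M}_\gamma)\mathbf{f}\in W\cap V^\perp = \{\mathbf{0}\}$, so $\mathbf{M}_\gamma$ restricts to the identity on $W$; together with $V^\perp\subseteq\kernel(\mathbf{M}_\gamma)$ and $\mathbb{R}^n = W\oplus V^\perp$, uniqueness of the oblique projection forces $\mathbf{M}_\gamma = \bpi_{WV^\perp}$, i.e., $\nu$ is an oblique dual of $\mu$ on $V$.

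The one step needing genuine care is the passage, in the ``only if'' direction, from an almost-everywhere vanishing inner product to membership in $V^\perp$: this is exactly where the frame (not merely Bessel) hypothesis on $\nu$ enters, and I expect it to be the main subtlety if one is careless about extending the frame inequality from $V$ to all of $\mathbb{R}^n$ via $\bP_V$. An alternative that sidesteps the frame bound is to note that $\langle\mathbf{h},\cdot\rangle$ is continuous, so vanishing $\nu$-a.e.\ forces it to vanish on $\supp(\nu)$, and then $\spanning\{\supp(\nu)\} = V$ from \cref{TAcharacterization} gives $\mathbf{h}\perp V$. Everything else is routine bookkeeping with adjoints and the defining properties of the oblique projection.
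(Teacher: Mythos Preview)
Your proof is correct. The ``if'' direction is identical to the paper's. In the ``only if'' direction the paper takes a slightly different route: it first shows that the matrix $\mathbf{F} = \mathbf{M}_\gamma$ is idempotent (by using the consistent-reconstruction identity applied with $\hat{\mathbf{f}}$ in place of $\mathbf{f}$), then identifies $\kernel(\mathbf{F}) = V^\perp$ via your ``alternative'' continuity/support argument, and finally invokes rank--nullity together with $\range(\mathbf{F}) \subseteq W$ to force $\range(\mathbf{F}) = W$. Your argument is more direct: you never need idempotency, since showing $(\mathbf{Id}-\mathbf{M}_\gamma)\mathbf{f} \in V^\perp$ for all $\mathbf{f}$ and then intersecting with $W$ gives $\mathbf{M}_\gamma|_W = \mathbf{Id}_W$ immediately, which together with $\mathbf{M}_\gamma|_{V^\perp} = 0$ pins down $\mathbf{M}_\gamma$ on all of $W \oplus V^\perp$. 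Your primary use of the lower frame bound (extended to $\mathbb{R}^n$ via $\bP_V$) is a clean substitute for the paper's continuity-on-the-support argument; both exploit that $\nu$ is a genuine frame for $V$, just through different characterizations in \cref{TAcharacterization}.
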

\begin{proof}
Suppose $\nu$ is an oblique dual probabilistic frame of $\mu$ on $V$. Then there exists $\gamma \in \Gamma(\mu, \nu)$ such that 
\begin{equation*}
    \bpi_{WV^\perp} = \int_{W \times V} \mathbf{x} \mathbf{y}^t d\gamma(\mathbf{x}, \mathbf{y}).
\end{equation*}
Hence, for any $\mathbf{f} \in \mathbb{R}^n$, $\hat{\mathbf{f}} = \bpi_{WV^\perp} \mathbf{f}$, and for all $\mathbf{z} \in V$, 
\begin{equation*}
     \langle \hat{\mathbf{f}}, \mathbf{z}  \rangle = \langle \bpi_{WV^\perp} \mathbf{f}, \mathbf{z}  \rangle = \langle \mathbf{f}, \bpi_{VW^\perp} \mathbf{z} \rangle  = \langle \mathbf{f}, \mathbf{z} \rangle, 
\end{equation*}
which implies that $\mu$ and $\nu$ perform probabilistic consistent reconstruction.

Conversely, if $\mu$ and $\nu$ perform probabilistic consistent reconstruction, then there exists $\tilde{\gamma} \in \Gamma(\mu, \nu)$ such that for any $\mathbf{f} \in \mathbb{R}^n$ and for $\nu$ almost all $\mathbf{z} \in V$, $ \langle \mathbf{f}, \mathbf{z}  \rangle = \langle \hat{\mathbf{f}},  \mathbf{z} \rangle$, where
\begin{equation*}
    \hat{\mathbf{f}} = \int_{W \times V} \mathbf{x} \langle \mathbf{y, f} \rangle d\tilde{\gamma}(\mathbf{x}, \mathbf{y}).
\end{equation*}

For a given $\mathbf{f} \in \mathbb{R}^n$, let 
$\mathscr{A}_{\mathbf{f}}:= \{\mathbf{y} \in V: \langle \hat{\mathbf{f}},  \mathbf{y}  \rangle  \neq \langle \mathbf{f}, \mathbf{y} \rangle \}$. 
By the assumption of probabilistic consistent reconstruction, we have 
$\nu(\mathscr{A}_{\mathbf{f}}) = 0$. Since $\tilde{\gamma} \in \Gamma(\mu, \nu)$, then $\tilde{\gamma}(W \times \mathscr{A}_{\mathbf{f}}) = \nu(\mathscr{A}_{\mathbf{f}}) = 0$. The two functions 
$(\mathbf{x}, \mathbf{y}) \mapsto \mathbf{x} \langle \mathbf{y}, \hat{\mathbf{f}} \rangle$
and 
$(\mathbf{x}, \mathbf{y}) \mapsto \mathbf{x} \langle \mathbf{y}, \mathbf{f} \rangle$
only differ on the $\tilde{\gamma}$-null set $W \times \mathscr{A}_{\mathbf{f}}$, so
\begin{equation} \label{eq:F is a projection}
\int_{W \times V} \mathbf{x} \langle \mathbf{y}, \hat{\mathbf{f}} \rangle 
\, d\tilde{\gamma}(\mathbf{x}, \mathbf{y})
=
\int_{W \times V} \mathbf{x} \langle \mathbf{y}, \mathbf{f} \rangle 
\, d\tilde{\gamma}(\mathbf{x}, \mathbf{y}).
\end{equation}

Now let 
\begin{equation*}
       \mathbf{F}: = \int_{W \times V} \mathbf{x} \mathbf{y}^t d\tilde{\gamma}(\mathbf{x}, \mathbf{y}).
    \end{equation*}
Then, for any $\mathbf{f} \in \mathbb{R}^n$, $\hat{\mathbf{f}} = \mathbf{F} \mathbf{f}$, and
\begin{equation*}
    \mathbf{F}^2 \mathbf{f} = \mathbf{F} \hat{\mathbf{f}} = \int_{W \times V} \mathbf{x} \langle \mathbf{y},  \hat{\mathbf{f}} \rangle d\tilde{\gamma}(\mathbf{x}, \mathbf{y}) = \int_{W \times V} \mathbf{x} \langle \mathbf{y},  \mathbf{f} \rangle d\tilde{\gamma}(\mathbf{x}, \mathbf{y}) = \mathbf{F} \mathbf{f},
\end{equation*}
where the third equality is just \eqref{eq:F is a projection}. Therefore, $\mathbf{F}$ is a projection.

Our goal now is to show $\mathbf{F} = \bpi_{WV^\perp}$, which is equivalent to $\range(\mathbf{F})=W$ and $\kernel(\mathbf{F}) = V^\perp$. 
Since $\tilde{\gamma} \in \Gamma(\mu, \nu)$, $\mu$ is supported on $W$, and $\nu$ is supported on $V$, we know $V^\perp \subseteq \kernel(\mathbf{F})$ and $\range(\mathbf{F}) \subseteq W$.  

We first claim that $\kernel(\mathbf{F})= V^\perp$. To see this, suppose $\mathbf{f}_0 \in \kernel(\mathbf{F})$. If there were $\mathbf{z}_0 \in \supp(\nu)$ so that $\langle \mathbf{f}_0, \mathbf{z}_0 \rangle \neq 0$, continuity of the inner product would imply the existence of an open neighborhood $B_{\mathbf{z}_0}$ of $\mathbf{z}_0$ with $\nu(B_{\mathbf{z}_0})>0$ (since $\mathbf{z}_0 \in \supp(\nu)$) such that for any $\mathbf{z} \in B_{\mathbf{z}_0}$, $ \langle \mathbf{f}_0, \mathbf{z}  \rangle \neq 0 $. On the other hand, probabilistic consistent reconstruction implies that for $\nu$-almost all $\mathbf{z} \in V$, 
\[
    \langle \mathbf{f}_0, \mathbf{z}  \rangle = \langle \hat{\mathbf{f}}_0,  \mathbf{z} \rangle = \langle \mathbf{F}\mathbf{f}_0,  \mathbf{z} \rangle=0.
\]
From this contradiction we conclude that $\langle \mathbf{f}_0, \mathbf{z}_0 \rangle = 0$ for all $\mathbf{z}_0 \in \supp(\nu)$. Since $\spanning(\supp(\nu)) = V$, this implies $\mathbf{f}_0 \in V^\perp$. Then $\kernel(\mathbf{F})= V^\perp$.

By the Rank–Nullity theorem, 
\begin{equation*}
    \Dim(\mathrm{Ran}(\mathbf{F})) + \Dim(V^\perp) = n.
\end{equation*}
Since $\mathbb{R}^n = W \oplus V^\perp$ and $\range(\mathbf{F}) \subset W$, this implies $\range(\mathbf{F}) =W$. Therefore, 
 \begin{equation*}
        \mathbf{F} = \bpi_{WV^\perp} = \int_{W \times V} \mathbf{x} \mathbf{y}^t d\tilde{\gamma}(\mathbf{x}, \mathbf{y}),
    \end{equation*}
which implies that $\nu$ is a probabilistic oblique dual frame of $\mu$ on $V$.
\end{proof}

The following lemma shows that once we have obtained the reconstruction formula for given measures $\mu$ and $\nu$, we can construct an oblique dual frame of $\mu$ on any arbitrary subspace $K$ for which $\mathbb{R}^n = W \oplus K^\perp$. In particular, if $\mu$ is a frame for $W$ and $\nu$ is an oblique dual frame of $\mu$ on $V$, then one can construct an oblique dual of $\mu$ on any subspace $K$ with $\mathbb{R}^n = W \oplus K^\perp$.
\begin{lemma}
   Let $W \subset \mathbb{R}^n$ be a subspace. Suppose $\mu \in \mathcal{P}_2(W)$, $\nu \in \mathcal{P}_2(\mathbb{R}^n)$, and there exists $\gamma \in \Gamma(\mu, \nu)$ such that for any $\mathbf{f} \in W$, 
 \begin{equation*}
        \mathbf{f} = \int_{W \times \mathbb{R}^n} \mathbf{x} \langle \mathbf{y}, \mathbf{f} \rangle d\gamma(\mathbf{x}, \mathbf{y}).
    \end{equation*}
Then for any subspace $K$ for which $\mathbb{R}^n = W \oplus K^\perp$, ${\bpi_{KW^\perp}}_{\#}\nu$ is an oblique dual probabilistic frame of $\mu$ on $K$. 
\end{lemma}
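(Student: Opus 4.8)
The plan is to take the obvious candidate coupling and check that it works. Concretely, I would push $\gamma$ forward under the map $(\mathbf{x},\mathbf{y}) \mapsto (\mathbf{x},\bpi_{KW^\perp}\mathbf{y})$, which leaves the $W$-coordinate alone and applies the oblique projection to the second coordinate. Before doing so, I would record a couple of preliminaries. First, since $\mathbb{R}^n = W \oplus K^\perp$, \cref{lem:directsumangle} gives the equivalent decomposition $\mathbb{R}^n = K \oplus W^\perp$, so the oblique projections $\bpi_{KW^\perp}$ (onto $K$ along $W^\perp$) and $\bpi_{WK^\perp}$ (onto $W$ along $K^\perp$) are both well-defined and are mutually adjoint, $(\bpi_{KW^\perp})^* = \bpi_{WK^\perp}$. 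Second, ${\bpi_{KW^\perp}}_{\#}\nu \in \mathcal{P}_2(K)$: it is supported on $K$, and since $\bpi_{KW^\perp}$ is bounded, $\int_K \|\mathbf{z}\|^2 \, d({\bpi_{KW^\perp}}_{\#}\nu)(\mathbf{z}) = \int_{\mathbb{R}^n} \|\bpi_{KW^\perp}\mathbf{y}\|^2 \, d\nu(\mathbf{y}) \le \|\bpi_{KW^\perp}\|^2 M_2(\nu) < \infty$.

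Next I would set $\gamma' := (\mathbf{Id},\bpi_{KW^\perp})_{\#}\gamma$ and check that $\gamma' \in \Gamma(\mu, {\bpi_{KW^\perp}}_{\#}\nu)$ by computing its marginals: the first marginal is untouched and equals $\mu$, while the second is the pushforward of the $y$-marginal $\nu$ under $\bpi_{KW^\perp}$, namely ${\bpi_{KW^\perp}}_{\#}\nu$. Then, for an arbitrary $\mathbf{f} \in \mathbb{R}^n$, a change of variables followed by the adjoint identity gives
\begin{align*}
\int_{W \times K} \mathbf{x}\,\langle \mathbf{z}, \mathbf{f}\rangle \, d\gamma'(\mathbf{x},\mathbf{z})
&= \int_{W \times \mathbb{R}^n} \mathbf{x}\,\langle \bpi_{KW^\perp}\mathbf{y}, \mathbf{f}\rangle \, d\gamma(\mathbf{x},\mathbf{y}) \\
&= \int_{W \times \mathbb{R}^n} \mathbf{x}\,\langle \mathbf{y}, \bpi_{WK^\perp}\mathbf{f}\rangle \, d\gamma(\mathbf{x},\mathbf{y}).
\end{align*}
Since $\bpi_{WK^\perp}\mathbf{f} \in W$, applying the hypothesis with $\bpi_{WK^\perp}\mathbf{f}$ in place of $\mathbf{f}$ identifies the last integral as $\bpi_{WK^\perp}\mathbf{f}$. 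As this holds for every $\mathbf{f} \in \mathbb{R}^n$, and the matrix integral $\int_{W\times K}\mathbf{x}\mathbf{z}^t\,d\gamma'$ converges (e.g.\ by Cauchy--Schwarz, $\int \|\mathbf{x}\|\|\mathbf{y}\|\,d\gamma \le M_2(\mu)^{1/2}M_2(\nu)^{1/2} < \infty$), I would conclude $\int_{W \times K} \mathbf{x}\mathbf{z}^t \, d\gamma'(\mathbf{x},\mathbf{z}) = \bpi_{WK^\perp}$, which is precisely the statement that ${\bpi_{KW^\perp}}_{\#}\nu$ is an oblique dual probabilistic frame of $\mu$ on $K$ with respect to $\gamma'$.

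I do not expect a genuine obstacle here; the content is essentially bookkeeping. The one place requiring vigilance is keeping straight which direct-sum decomposition legitimizes which oblique projection — $\bpi_{KW^\perp}$ needs $\mathbb{R}^n = K \oplus W^\perp$, hence the appeal to \cref{lem:directsumangle} — and carrying out the adjoint swap $\langle \bpi_{KW^\perp}\mathbf{y}, \mathbf{f}\rangle = \langle \mathbf{y}, \bpi_{WK^\perp}\mathbf{f}\rangle$ correctly, so that the hypothesis gets applied to a vector that genuinely lies in $W$. As a side observation, once the reconstruction identity is in hand, \cref{lemma:obliqueDualEquiv} with $V = K$ shows that $\mu$ and ${\bpi_{KW^\perp}}_{\#}\nu$ are in fact probabilistic frames for $W$ and $K$, respectively.
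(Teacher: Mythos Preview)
Your proposal is correct and follows essentially the same route as the paper: both define the coupling $\gamma' = (\mathbf{Id},\bpi_{KW^\perp})_\#\gamma$, use the adjoint identity $\langle \bpi_{KW^\perp}\mathbf{y},\mathbf{f}\rangle = \langle \mathbf{y},\bpi_{WK^\perp}\mathbf{f}\rangle$, and apply the hypothesis to $\bpi_{WK^\perp}\mathbf{f} \in W$. Your version is slightly more explicit about the preliminary checks (well-definedness of $\bpi_{KW^\perp}$ via \cref{lem:directsumangle}, $\mathcal{P}_2$-membership, integrability), but the core argument is identical.
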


\begin{proof}
 Note that for any $\mathbf{g} \in \mathbb{R}^n$, $ \bpi_{WK^\perp}\mathbf{g} \in W$. Then by the assumption,
\begin{equation*}
        \bpi_{WK^\perp}\mathbf{g} = \int_{W \times \mathbb{R}^n} \mathbf{x} \langle \mathbf{y}, \bpi_{WK^\perp}\mathbf{g} \rangle d\gamma(\mathbf{x}, \mathbf{y}) = \int_{W \times \mathbb{R}^n} \mathbf{x} \langle \bpi_{KW^\perp}\mathbf{y}, \mathbf{g} \rangle d\gamma(\mathbf{x}, \mathbf{y}).
    \end{equation*}
    Now let $\Tilde{\gamma}:= (\mathbf{Id}, \bpi_{KW^\perp})_\#\gamma \in \Gamma(\mu, {\bpi_{KW^\perp}}_{\#}\nu) \subset \mathcal{P}(W \times K)$. Then for any $\mathbf{g} \in \mathbb{R}^n$, 
    \begin{equation*}
        \bpi_{WK^\perp}\mathbf{g} = \int_{W \times K} \mathbf{x} \langle \mathbf{y}, \mathbf{g} \rangle d\Tilde{\gamma}(\mathbf{x}, \mathbf{y}).
    \end{equation*}
    Then by \cref{lemma:obliqueDualEquiv}, ${\bpi_{KW^\perp}}_{\#}\nu$ is an oblique dual probabilistic frame of $\mu$ on $K$. 
\end{proof}

Recall that \cref{Oblique_Characterization} gives a parametrization of all the oblique dual frames for a given frame in $W$. In the following lemma, we give an analogous parametrization of all oblique dual probabilistic frames of pushforward type. Additionally, $(\bpi_{VW^\perp}{\bf S}_\mu^{\dagger})_\#\mu$ is called the \emph{canonical oblique dual probabilistic frame} of $\mu$.

\begin{proposition}\label{lemma:poblique_dual_equiv}
Let $\mathbb{R}^n = W \oplus V^\perp$ and let $\mu \in \mathcal{P}_2(W)$ be a probabilistic frame for $W$ with frame operator ${\bf S}_\mu$. 
 Suppose $T_\# \mu$ is an oblique dual frame of $\mu$ on $V$ where $T: W \rightarrow V$ is measurable.
Then for any $\ {\bf x} \in \mathbb{R}^n$, $T$ precisely satisfies 
   \begin{equation*}
        T({\bf x}) = \bpi_{VW^\perp}{\bf S}_\mu^{\dagger}({\bf x}) + h({\bf x}) - \int_{W} \langle {\bf S}_\mu^{\dagger}{\bf x}, {\bf y} \rangle h({\bf y}) d\mu({\bf y}),
      \end{equation*}
where $h: W \rightarrow V$ is such that $h_\#\mu \in \mathcal{P}_2(V)$.
\end{proposition}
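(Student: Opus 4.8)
The plan is to mimic the proof of \cref{Oblique_Characterization} in the probabilistic setting, using \cref{lemma:poblique_dual_equiv} would be circular, so instead I would use \cref{lemma:obliqueDualEquiv} as the characterization of oblique duality. First I note that, since $T_\#\mu$ is an oblique dual of $\mu$ of pushforward type with coupling $(\mathbf{Id},T)_\#\mu$, condition $(2)$ of \cref{lemma:obliqueDualEquiv} becomes
\[
    \bpi_{WV^\perp} = \int_W \mathbf{x}\,(T(\mathbf{x}))^t\,d\mu(\mathbf{x}),
\]
equivalently, applying the adjoint and using $\bpi_{WV^\perp}^* = \bpi_{VW^\perp}$,
\[
    \bpi_{VW^\perp} = \int_W T(\mathbf{x})\,\mathbf{x}^t\,d\mu(\mathbf{x}) = U_\mu^* T,
\]
where I regard $T \in L^2(\mu,W)^{\oplus n}$ componentwise and $U_\mu^*$ is the synthesis operator. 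So the claim is that the solution set of the operator equation $U_\mu^* T = \bpi_{VW^\perp}$ (subject to $\range(T) \subseteq V$ and $h_\#\mu \in \mathcal{P}_2(V)$) is exactly the stated affine family.

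Second, I would verify that the canonical choice $T_0 := \bpi_{VW^\perp}\mathbf{S}_\mu^\dagger$ (restricted to $W$, extended measurably) is a particular solution: indeed $U_\mu^* T_0 = \int_W \bpi_{VW^\perp}\mathbf{S}_\mu^\dagger \mathbf{x}\,\mathbf{x}^t\,d\mu(\mathbf{x}) = \bpi_{VW^\perp}\mathbf{S}_\mu^\dagger \mathbf{S}_\mu = \bpi_{VW^\perp}\bP_W = \bpi_{VW^\perp}$, where the last step uses \cref{lemma:frame_projection} ($\mathbf{S}_\mu^\dagger\mathbf{S}_\mu = \bP_W$) and \cref{lemma:oblique_proj_identity} ($\bpi_{VW^\perp}\bP_W = \bpi_{VW^\perp}$). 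Next, for an arbitrary measurable $h\colon W\to V$ with $h_\#\mu\in\mathcal P_2(V)$, set
\[
    T_h(\mathbf{x}) := \bpi_{VW^\perp}\mathbf{S}_\mu^\dagger \mathbf{x} + h(\mathbf{x}) - \int_W \langle \mathbf{S}_\mu^\dagger \mathbf{x}, \mathbf{y}\rangle\, h(\mathbf{y})\, d\mu(\mathbf{y}) = T_0(\mathbf{x}) + h(\mathbf{x}) - (h_\#)(U_\mu \mathbf{S}_\mu^\dagger\mathbf{x}),
\]
and compute $U_\mu^* T_h$; the $h$-contribution is $\int_W \mathbf{x}\,h(\mathbf{x})^t\,d\mu(\mathbf{x}) - \int_W \mathbf{x}\big(\int_W \langle\mathbf{S}_\mu^\dagger\mathbf{x},\mathbf{y}\rangle h(\mathbf{y})\,d\mu(\mathbf{y})\big)^t d\mu(\mathbf{x})$, and by Fubini and the definition of $\mathbf{S}_\mu$ the second double integral equals $\mathbf{S}_\mu \mathbf{S}_\mu^\dagger \int_W \mathbf{x}\,h(\mathbf{x})^t\,d\mu(\mathbf{x}) = \bP_W \int_W \mathbf{x}\,h(\mathbf{x})^t\,d\mu(\mathbf{x})$; since the range of $\mathbf x\mapsto\mathbf x$ under $\mu$ lies in $W$, $\bP_W$ acts as the identity there and the $h$-terms cancel. (I should double-check whether $\bP_W$ needs to hit the first factor $\mathbf x$ or the whole matrix; because $\mathbf x\in W$ $\mu$-a.e. both readings give cancellation.) Hence $U_\mu^* T_h = U_\mu^* T_0 = \bpi_{VW^\perp}$, so every $T_h$ gives an oblique dual of pushforward type, and one checks $\range(T_h)\subseteq V$ since $\bpi_{VW^\perp}\mathbf S_\mu^\dagger$ maps into $V$ and $h$ maps into $V$.

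Third, for the converse, suppose $T_\#\mu$ is any oblique dual of pushforward type, i.e. $U_\mu^* T = \bpi_{VW^\perp}$ with $\range(T)\subseteq V$. Define $h := T - T_0 = T - \bpi_{VW^\perp}\mathbf S_\mu^\dagger$ (as an $L^2(\mu)$ map $W\to V$); then $h_\#\mu\in\mathcal P_2(V)$ since both $T_\#\mu$ and $T_0{}_\#\mu$ are in $\mathcal P_2(V)$ and $h=T-T_0$ is a difference of $L^2(\mu)$ maps, and $U_\mu^* h = \bpi_{VW^\perp}-\bpi_{VW^\perp}=\mathbf{0}_{n\times n}$, i.e. $\int_W \mathbf x\, h(\mathbf x)^t\,d\mu(\mathbf x)=\mathbf 0$. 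I then must show that this particular $h$ reproduces $T$ through the stated formula, i.e. that $\int_W \langle \mathbf S_\mu^\dagger \mathbf x, \mathbf y\rangle h(\mathbf y)\,d\mu(\mathbf y) = 0$ for ($\mu$-a.e.) $\mathbf x$. Writing that integral as $\big(\int_W h(\mathbf y)\,\mathbf y^t\,d\mu(\mathbf y)\big)\mathbf S_\mu^\dagger\mathbf x = (U_\mu^* h)^t \mathbf S_\mu^\dagger \mathbf x$... wait, more carefully it is $\big(\int_W h(\mathbf y)\mathbf y^t d\mu\big)\mathbf S_\mu^\dagger\mathbf x$, and $\int_W h(\mathbf y)\mathbf y^t d\mu = (\int_W \mathbf y\, h(\mathbf y)^t d\mu)^t = \mathbf 0^t = \mathbf 0$. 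So the correction term vanishes and $T(\mathbf x) = T_0(\mathbf x) + h(\mathbf x) = T_0(\mathbf x) + h(\mathbf x) - 0$, which is exactly the asserted formula for this choice of $h$. This completes the parametrization.

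The main obstacle I anticipate is the bookkeeping around $L^2(\mu)$ equivalence classes versus pointwise-defined measurable maps, and making the Fubini/interchange steps rigorous: the formula must hold for \emph{every} $\mathbf x\in\mathbb R^n$ on the left-hand side (since $T$ is defined on $W$, or one may extend by zero/arbitrarily off a $\mu$-null set), and one needs the finite-second-moment hypotheses precisely to justify that $\int_W \mathbf x\, h(\mathbf x)^t\,d\mu$ and the iterated integrals converge and commute. A secondary subtlety is confirming that $h_\#\mu\in\mathcal P_2(V)$ is the right integrability condition to impose (rather than, say, $h\in L^2(\mu)$), and that it is equivalent to what is needed; since $M_2(h_\#\mu)=\int_W\|h(\mathbf x)\|^2 d\mu(\mathbf x)$, these coincide, so this is merely a matter of phrasing. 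Everything else is a direct translation of the discrete argument behind \cref{Oblique_Characterization}, with sums replaced by $\mu$-integrals and the Bessel sequence $\{\mathbf h_i\}$ replaced by the map $h$.
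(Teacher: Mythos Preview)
Your proof is correct and follows essentially the same route as the paper's: verify that every map of the stated form yields an oblique dual by checking $\int_W \mathbf{x}\,T(\mathbf{x})^t\,d\mu(\mathbf{x}) = \bpi_{WV^\perp}$ (using $\mathbf{S}_\mu \mathbf{S}_\mu^\dagger = \bP_W$ and $\bP_W\mathbf{x}=\mathbf{x}$ for $\mathbf{x}\in W$), and conversely exhibit an $h$ that recovers $T$ through the formula.

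The only notable difference is in the converse. You set $h := T - \bpi_{VW^\perp}\mathbf{S}_\mu^\dagger$ and then argue that the correction integral vanishes because $\int_W h(\mathbf{y})\,\mathbf{y}^t\,d\mu(\mathbf{y}) = \mathbf{0}$. The paper instead sets $h := T$ and uses the oblique-dual identity $\bpi_{VW^\perp}\mathbf{S}_\mu^\dagger \mathbf{x} = \int_W \langle \mathbf{S}_\mu^\dagger \mathbf{x}, \mathbf{y}\rangle\, T(\mathbf{y})\,d\mu(\mathbf{y})$ directly, so the formula becomes a tautology in one line. Your choice is the natural ``particular solution plus kernel'' decomposition and makes the affine structure transparent; the paper's choice is shorter but exploits the non-uniqueness of $h$ in the parametrization. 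Both are equally valid, and your remarks about $h_\#\mu \in \mathcal{P}_2(V)$ being equivalent to $h \in L^2(\mu)$ and about the Fubini justifications are well placed.
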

\begin{proof}
    First, note that if $T: W \rightarrow V$ is of the above type, then $T_\# \mu \in \mathcal{P}_2(V)$. Now define $ \gamma = ({\bf Id}, T)_\#\mu \in \Gamma(\mu, T_\#\mu) \subset \mathcal{P}(W \times V)$. 
    Then 
    \begin{equation*}
    \begin{split}
        \int_{W \times V} {\bf x} {\bf y}^t d\gamma({\bf x, y})  = \int_{W} {\bf x} T({\bf x})^t d\mu({\bf x}) &=  {\bf S}_\mu{\bf S}_\mu^{\dagger} \bpi_{WV^\perp}+ \int_{W} {\bf x} h({\bf x})^t d\mu({\bf x}) \\
        &\quad -  \int_{W}   {\bf S}_\mu {\bf S}_\mu^{\dagger} {\bf y} h({\bf y})^t d\mu({\bf y}) = \bpi_{WV^\perp},
    \end{split}
    \end{equation*}
where ${\bf S}_\mu {\bf S}_\mu^{\dagger}$ is the orthogonal projection onto $W$ (see \cref{lemma:frame_projection}). Therefore, $T_\# \mu$ is an oblique dual frame to $\mu$. 

Conversely, if $T_\# \mu$ is an oblique dual to $\mu$, then 
   \begin{equation*}
      \bpi_{VW^\perp} = \bpi_{WV^\perp}^* = \int_{W }  T({\bf y}) {\bf y}^t d\mu({\bf y}).
    \end{equation*}
Note that for any ${\bf x} \in \mathbb{R}^n$, 
    \begin{equation*}
    \begin{split}
         T({\bf x}) 
         &= \bpi_{VW^\perp}{\bf S}_\mu^{\dagger}({\bf x}) + T({\bf x}) -  \bpi_{VW^\perp}{\bf S}_\mu^{\dagger}({\bf x}) \\
         &=\bpi_{VW^\perp}{\bf S}_\mu^{\dagger}({\bf x}) + T({\bf x}) - \int_{W }   \langle {\bf S}_\mu^{\dagger}{\bf x},  {\bf y} \rangle T({\bf y}) d\mu({\bf y}).
    \end{split}
    \end{equation*}
Since $T_\# \mu$ is an oblique dual of $\mu$ on $V$, then $T_\#\mu \in \mathcal{P}_2(V)$. Setting $h=T$ shows that $T: W \rightarrow V$ is of the desired type.
\end{proof}

We present another corollary; the proof is similar to \cref{lemma:poblique_dual_equiv}. 
\begin{corollary}
 Let $\mathbb{R}^n = W \oplus V^\perp$ and let $\mu \in \mathcal{P}_2(W)$ be a probabilistic frame for $W$ with frame operator ${\bf S}_\mu$. 
  Suppose $T_\# \mu$ is an oblique dual frame of $\mu$ on $V$ where $T: W \rightarrow V$ is measurable.
 Then for any $\ {\bf x} \in \mathbb{R}^n$, $T$ precisely satisfies 
   \begin{equation*}
        T({\bf x}) = \bpi_{VW^\perp}{\bf S}_\mu^{\dagger}({\bf x}) + h({\bf x}),
      \end{equation*}
where $h: W \rightarrow V$ is such that $h_\#\mu \in \mathcal{P}_2(V)$ with $ \int_{W } {\bf x} h({\bf x})^t d\mu({\bf x}) = {\bf 0}_{n \times n}$.
\end{corollary}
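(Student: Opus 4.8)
The plan is to follow the template of \cref{lemma:poblique_dual_equiv} almost verbatim, the only change being that the free parameter $h$ is now constrained directly by $\int_{W}\mathbf{x}\,h(\mathbf{x})^t\,d\mu(\mathbf{x})=\mathbf{0}_{n\times n}$ rather than being corrected by an integral term. Throughout I will use the identities $(\mathbf{S}_\mu^{\dagger})^t=\mathbf{S}_\mu^{\dagger}$ (since $\mathbf{S}_\mu$ is symmetric), $\mathbf{S}_\mu\mathbf{S}_\mu^{\dagger}=\bP_W$ (from \cref{lemma:frame_projection}), $\bpi_{VW^\perp}^t=\bpi_{WV^\perp}$, and $\bP_W\bpi_{WV^\perp}=\bpi_{WV^\perp}$ (the adjoint of the identity $\bpi_{VW^\perp}\bP_W=\bpi_{VW^\perp}$ in \cref{lemma:oblique_proj_identity}).

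For the ``if'' direction, assume $T=\bpi_{VW^\perp}\mathbf{S}_\mu^{\dagger}+h$ with $h\colon W\to V$ measurable, $h_\#\mu\in\mathcal{P}_2(V)$, and $\int_{W}\mathbf{x}\,h(\mathbf{x})^t\,d\mu(\mathbf{x})=\mathbf{0}_{n\times n}$. First I would note $T_\#\mu\in\mathcal{P}_2(V)$: since $\bpi_{VW^\perp}\mathbf{S}_\mu^{\dagger}$ is a bounded linear map, $\mu\in\mathcal{P}_2(W)$, and $h_\#\mu\in\mathcal{P}_2(V)$, the estimate $\|T(\mathbf{x})\|^2\le 2\|\bpi_{VW^\perp}\mathbf{S}_\mu^{\dagger}\mathbf{x}\|^2+2\|h(\mathbf{x})\|^2$ gives $M_2(T_\#\mu)<\infty$. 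Then, with $\gamma=(\mathbf{Id},T)_\#\mu\in\Gamma(\mu,T_\#\mu)$, I would expand $\int_{W\times V}\mathbf{x}\mathbf{y}^t\,d\gamma=\int_{W}\mathbf{x}\,T(\mathbf{x})^t\,d\mu$ using $T(\mathbf{x})^t=\mathbf{x}^t\mathbf{S}_\mu^{\dagger}\bpi_{WV^\perp}+h(\mathbf{x})^t$; the first piece integrates to $\mathbf{S}_\mu\mathbf{S}_\mu^{\dagger}\bpi_{WV^\perp}=\bP_W\bpi_{WV^\perp}=\bpi_{WV^\perp}$ and the second vanishes by hypothesis, so $\int_{W\times V}\mathbf{x}\mathbf{y}^t\,d\gamma=\bpi_{WV^\perp}$ and $T_\#\mu$ is an oblique dual of $\mu$ on $V$.

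For the converse, assume $T_\#\mu$ is an oblique dual of $\mu$ on $V$, so that $\int_{W}\mathbf{x}\,T(\mathbf{x})^t\,d\mu(\mathbf{x})=\bpi_{WV^\perp}$ with the coupling $(\mathbf{Id},T)_\#\mu$. Set $h:=T-\bpi_{VW^\perp}\mathbf{S}_\mu^{\dagger}$; it is measurable and maps $W$ into $V$ (both summands land in $V$), and $\|h(\mathbf{x})\|^2\le 2\|T(\mathbf{x})\|^2+2\|\bpi_{VW^\perp}\mathbf{S}_\mu^{\dagger}\mathbf{x}\|^2$ gives $h_\#\mu\in\mathcal{P}_2(V)$. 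Finally,
\[
\int_{W}\mathbf{x}\,h(\mathbf{x})^t\,d\mu(\mathbf{x})=\int_{W}\mathbf{x}\,T(\mathbf{x})^t\,d\mu(\mathbf{x})-\mathbf{S}_\mu\mathbf{S}_\mu^{\dagger}\bpi_{WV^\perp}=\bpi_{WV^\perp}-\bpi_{WV^\perp}=\mathbf{0}_{n\times n},
\]
and $T=\bpi_{VW^\perp}\mathbf{S}_\mu^{\dagger}+h$ by construction; as in \cref{lemma:poblique_dual_equiv}, the displayed formula is stated for $\mathbf{x}\in\mathbb{R}^n$ under the convention that $T$ and $h$ are extended to $\mathbb{R}^n$ by the same expressions (or read on $W$).

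I do not expect a genuine obstacle here: the content is bookkeeping. The only spots requiring a moment of care are confirming the $\mathcal{P}_2$-membership of $T_\#\mu$ and $h_\#\mu$ via the elementary inequality $\|a+b\|^2\le 2\|a\|^2+2\|b\|^2$, and keeping the transpose identities straight so that the product $\mathbf{S}_\mu\mathbf{S}_\mu^{\dagger}\bpi_{WV^\perp}$ correctly collapses to $\bpi_{WV^\perp}$, using $\mathbf{S}_\mu\mathbf{S}_\mu^{\dagger}=\bP_W$ together with the fact that $\bP_W$ fixes $W=\range(\bpi_{WV^\perp})$.
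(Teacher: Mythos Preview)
Your proposal is correct and follows essentially the same approach as the paper, which simply remarks that the proof is similar to \cref{lemma:poblique_dual_equiv} and omits details. Your computation $\int_W \mathbf{x}\,T(\mathbf{x})^t\,d\mu = \mathbf{S}_\mu\mathbf{S}_\mu^{\dagger}\bpi_{WV^\perp} + \int_W \mathbf{x}\,h(\mathbf{x})^t\,d\mu = \bpi_{WV^\perp}$ in the forward direction and the choice $h:=T-\bpi_{VW^\perp}\mathbf{S}_\mu^{\dagger}$ in the converse are exactly what that remark points to.
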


The following example shows that given a probabilistic frame for $W$, its oblique dual probabilistic frames are not necessarily of pushforward type and can include both discrete and absolutely continuous measures.

\begin{example}\label{ex:non_pushforward}
Let $W = \spanning\left\{\begin{pmatrix}
        1 \\0
    \end{pmatrix}\right\}$ and $V = \spanning\left\{\begin{pmatrix}
        1 \\1
    \end{pmatrix}\right\}$.
    Then $V^\perp = \spanning\left\{\begin{pmatrix}
        1 \\-1
    \end{pmatrix}\right\}$ and $\mathbb{R}^2 = W \oplus V^\perp $. 
The corresponding oblique projection $\bpi_{WV^\perp}$ which maps $\begin{pmatrix}
        1 \\0
    \end{pmatrix}$ to $\begin{pmatrix}
        1 \\0
    \end{pmatrix}$ and $\begin{pmatrix}
        1 \\-1
    \end{pmatrix}$ to $\begin{pmatrix}
        0 \\0
    \end{pmatrix}$
    is given by $
\bpi_{WV^\perp} = 
\begin{pmatrix}
1 & 1 \\
0 & 0
\end{pmatrix}$
and thus $
\bpi_{VW^\perp} = 
\begin{pmatrix}
1 & 0 \\
1 & 0
\end{pmatrix}$. 
Now suppose 
$$\mu = \delta_{(1,0)} \ \text{and} \ \nu = \frac{1}{2}\delta_{(0,0)} + \frac{1}{2}\delta_{(2,2)}.$$
Clearly, $\mu$ is a probabilistic frame for $W$, $\nu$ is a probabilistic frame for $V$, and there does not exist a map $T: W \rightarrow V$ such that $\nu = T_\# \mu$. 
However, $\nu$ is an oblique dual probabilistic frame of $\mu$ with respect to the product measure $\mu \otimes \nu$, because
    \begin{equation*}
         \int_{W} \int_{V} \mathbf{x} \mathbf{y}^t d\mu(\mathbf{x}) d\nu(\mathbf{y}) 
         =  \frac{1}{2}\begin{pmatrix}
        1 \\0
    \end{pmatrix}  \begin{pmatrix}
        2 & 2
    \end{pmatrix} 
    = \begin{pmatrix}
        1 & 1 \\ 0
& 0    \end{pmatrix} =  \bpi_{WV^\perp}.
    \end{equation*}

Furthermore, if $\mu$ is the standard Gaussian measure on $W$, then $\mu$ is a probabilistic frame for $W$ and a sample from $\mu$ is ${\bf x} = \begin{pmatrix}
        X \\0
    \end{pmatrix}$ where $X \sim N(0,1)$. 
Then the frame operator and the associated Moore–Penrose inverse are $
{\bf S}_\mu = 
\int_W {\bf x}{\bf x}^t\, d\mu({\bf x}) = 
\begin{pmatrix}
1 & 0 \\
0 & 0
\end{pmatrix} $ and $
{\bf S}_\mu^{\dagger} = 
\begin{pmatrix}
1 & 0 \\
0 & 0
\end{pmatrix}
$.
Now consider the canonical oblique dual $\nu := (\bpi_{VW^\perp}{\bf S}_\mu^{\dagger})_\# \mu$. 
For ${\bf x} = \begin{pmatrix}
        X \\0
    \end{pmatrix}$, 
$$(\bpi_{VW^\perp}{\bf S}_\mu^{\dagger})({\bf x})
= \bpi_{VW^\perp}\begin{pmatrix}
        X \\0
    \end{pmatrix} =  \begin{pmatrix}
1 & 0 \\
1 & 0
\end{pmatrix} \begin{pmatrix}
        X \\0
    \end{pmatrix} = \begin{pmatrix}
        X \\ X
    \end{pmatrix} \in V. $$
Therefore, $\nu$ is a Gaussian measure supported on $V$ with mean $\begin{pmatrix}
        0 \\0
    \end{pmatrix}$ and covariance matrix  $\begin{pmatrix}
1 & 1 \\
1 & 1
\end{pmatrix}$, 
which is clearly absolutely continuous with respect to the Lebesgue measure for $V$. 
\end{example}

\section{Oblique Dual Probabilistic Frame Potential}\label{section:probabilisticDualPotential}

Recall that given $\mathbf{f} \in \mathcal{H} =W \oplus V^\perp$ and a frame $\{{\bf w}_i\}_{i \in I}$ in $W$, \cite[Proposition~4]{eldar2003sampling} and \cite[Proposition~5.1]{eldar2005general} showed that among all coefficient sequences $\{c_i\}_{i \in I} \in \ell^2(I)$ for which 
$$\bpi_{WV^\perp} \mathbf{f} = \sum_{i \in I} c_i \mathbf{w}_i,$$
the coefficient sequence with the minimal $\ell^2$ energy is $ \{\langle \mathbf{f}, \bpi_{VW^\perp} \mathbf{S}^\dagger \mathbf{w}_i \rangle \}_{i \in I}$, which is induced by the canonical oblique dual frame.  

In the following, we show a similar result for probabilistic frames, with functions $\omega \in L^2(\mu, W)$ substituting for the coefficient sequences.

\begin{proposition}\label{ObliqueDualEnergy}
Let $\mathbb{R}^n = W \oplus V^\perp$ and $\mu \in \mathcal{P}_2(W)$ be a probabilistic frame for $W$ with frame operator ${\bf S}_\mu$. For fixed ${\bf f} \in \mathbb{R}^n$, suppose $\bpi_{WV^\perp}{\bf f} = \int_{W} {\bf x} \omega({\bf x}) d\mu({\bf x})$ for some $\omega \in L^2(\mu, W)$. Then 
   \begin{equation*}
       \int_{W} |\omega({\bf x})|^2 d\mu({\bf x}) =  \int_{W} |\langle {\bf f}, \bpi_{VW^\perp}{\bf S}_\mu^{\dagger}{\bf x} \rangle|^2  d\mu({\bf x}) + \int_{W} \vert \omega({\bf x})-\langle {\bf f}, \bpi_{VW^\perp}{\bf S}_\mu^{\dagger}{\bf x} \rangle \vert^2 d\mu({\bf x}).
    \end{equation*}
   Therefore, 
    \begin{equation*}
       \int_{W} |\omega({\bf x})|^2 d\mu({\bf x}) \geq  \int_{W} |\langle {\bf f}, \bpi_{VW^\perp}{\bf S}_\mu^{\dagger}{\bf x} \rangle|^2  d\mu({\bf x}),
    \end{equation*}
   and equality holds if and only if $\omega ({\bf x}) = \langle {\bf f}, \bpi_{VW^\perp}{\bf S}_\mu^{\dagger}{\bf x} \rangle$ for $\mu$-almost all ${\bf x} \in W$. 
\end{proposition}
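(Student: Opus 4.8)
The plan is to read the claimed identity as a Pythagorean decomposition in the real Hilbert space $L^2(\mu, W)$. First I would isolate the candidate optimal function $\omega_0(\mathbf{x}) := \langle \mathbf{f}, \bpi_{VW^\perp}\mathbf{S}_\mu^{\dagger}\mathbf{x}\rangle$ and rewrite it in a more convenient form. Using $\bpi_{WV^\perp} = (\bpi_{VW^\perp})^*$ and the self-adjointness of $\mathbf{S}_\mu^{\dagger}$ (which holds since $\mathbf{S}_\mu$ is symmetric positive semidefinite), one gets $\omega_0(\mathbf{x}) = \langle \mathbf{a}, \mathbf{x}\rangle$, where $\mathbf{a} := \mathbf{S}_\mu^{\dagger}\bpi_{WV^\perp}\mathbf{f}$ is a fixed vector. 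Since $\mu \in \mathcal{P}_2(W)$, the estimate $\int_W |\langle \mathbf{a}, \mathbf{x}\rangle|^2\,d\mu(\mathbf{x}) \le \|\mathbf{a}\|^2 M_2(\mu) < \infty$ (the same bound that makes the analysis operator bounded) shows $\omega_0 \in L^2(\mu, W)$, so the orthogonal-decomposition language is legitimate and the identity to prove is equivalent to the vanishing of the cross term $\int_W (\omega(\mathbf{x}) - \omega_0(\mathbf{x}))\,\omega_0(\mathbf{x})\,d\mu(\mathbf{x})$.

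Next I would compute the two inner products $\int_W \omega_0 \omega\,d\mu$ and $\int_W \omega_0^2\,d\mu$ and check they coincide. For the first, pulling the constant vector $\mathbf{a}$ out of the $\mathbb{R}^n$-valued integral and invoking the hypothesis $\int_W \mathbf{x}\,\omega(\mathbf{x})\,d\mu(\mathbf{x}) = \bpi_{WV^\perp}\mathbf{f}$ yields $\int_W \langle \mathbf{a}, \mathbf{x}\rangle\,\omega(\mathbf{x})\,d\mu(\mathbf{x}) = \langle \mathbf{a}, \bpi_{WV^\perp}\mathbf{f}\rangle$. For the second, $\int_W \langle \mathbf{a}, \mathbf{x}\rangle\langle \mathbf{x}, \mathbf{a}\rangle\,d\mu(\mathbf{x}) = \langle \mathbf{a}, \mathbf{S}_\mu \mathbf{a}\rangle$; substituting $\mathbf{a} = \mathbf{S}_\mu^{\dagger}\bpi_{WV^\perp}\mathbf{f}$ and using $\mathbf{S}_\mu \mathbf{S}_\mu^{\dagger} = \bP_W$ (\cref{lemma:frame_projection}) together with $\bpi_{WV^\perp}\mathbf{f} \in W$ gives $\mathbf{S}_\mu\mathbf{a} = \bP_W \bpi_{WV^\perp}\mathbf{f} = \bpi_{WV^\perp}\mathbf{f}$, hence $\langle \mathbf{a}, \mathbf{S}_\mu \mathbf{a}\rangle = \langle \mathbf{a}, \bpi_{WV^\perp}\mathbf{f}\rangle$. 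The two expressions agree, so the cross term is zero, which is precisely the asserted identity $\int_W |\omega|^2\,d\mu = \int_W |\omega_0|^2\,d\mu + \int_W |\omega - \omega_0|^2\,d\mu$.

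The inequality $\int_W |\omega(\mathbf{x})|^2\,d\mu(\mathbf{x}) \ge \int_W |\langle \mathbf{f}, \bpi_{VW^\perp}\mathbf{S}_\mu^{\dagger}\mathbf{x}\rangle|^2\,d\mu(\mathbf{x})$ then follows at once from nonnegativity of the last term, and equality holds if and only if $\int_W |\omega - \omega_0|^2\,d\mu = 0$, i.e.\ $\omega(\mathbf{x}) = \langle \mathbf{f}, \bpi_{VW^\perp}\mathbf{S}_\mu^{\dagger}\mathbf{x}\rangle$ for $\mu$-almost every $\mathbf{x}$, exactly the stated condition. I do not expect a genuine obstacle: the only points needing a line of care are the justification that a constant vector may be moved through the vector-valued integrals and the verification $\omega_0 \in L^2(\mu, W)$; the real content is just the bookkeeping of adjoints ($\bpi_{VW^\perp}^* = \bpi_{WV^\perp}$, self-adjointness of $\mathbf{S}_\mu$ and $\mathbf{S}_\mu^{\dagger}$) combined with the projection identity $\mathbf{S}_\mu\mathbf{S}_\mu^{\dagger} = \bP_W$.
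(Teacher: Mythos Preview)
Your proposal is correct and is essentially the same Pythagorean argument as the paper's proof: both establish that $\omega_0(\cdot)=\langle \mathbf{f},\bpi_{VW^\perp}\mathbf{S}_\mu^\dagger(\cdot)\rangle$ and $\omega-\omega_0$ are orthogonal in $L^2(\mu,W)$, using $\mathbf{S}_\mu\mathbf{S}_\mu^\dagger=\bP_W$ and the hypothesis on $\omega$. The only cosmetic difference is that the paper phrases the orthogonality via the analysis/synthesis operators (showing $\omega-\omega_0\in\kernel(U_\mu^*)=(\range(U_\mu))^\perp$ while $\omega_0\in\range(U_\mu)$), whereas you compute the single cross term directly; the underlying computation is identical.
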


\begin{proof}
Since $\mu \in \mathcal{P}_2(W)$, we can interpret $\mu$ as a measure on $\R^n$ and $\mu \in \mathcal{P}_2(\mathbb{R}^n)$. The associated analysis operator $U_\mu: \mathbb{R}^n \rightarrow L^2(\mu, W)$ given by $(U_\mu{\bf x})(\cdot) = \langle {\bf x}, \cdot \rangle \in L^2(\mu, W)$ is well-defined (bounded). Hence, the related (adjoint) synthesis operator $U_\mu^*: L^2(\mu, W) \rightarrow \mathbb{R}^n$ exists and is given by $    U_\mu^*(\psi) = \int_{W} {\bf x}  \psi({\bf x}) d\mu({\bf x})$. Then we have
$\kernel(U_\mu^*) = (\range(U_\mu))^\perp$, where $\range(U_\mu)$ is the range of $U_\mu$ and  $\kernel(U_\mu^*)$ is the kernel of $U_\mu^*$ given by
\begin{equation*}
    \kernel(U_\mu^*) := \left\{\psi \in L^2(\mu, W): U_\mu^*(\psi) = \int_{W} {\bf x}  \psi({\bf x}) d\mu({\bf x}) = {\bf 0}\right\}.
\end{equation*}
For the given $\omega \in L^2(\mu, W)$, 
\begin{equation*}
        \omega(\cdot) = \omega(\cdot) - \langle {\bf f}, \bpi_{VW^\perp}{\bf S}_\mu^{\dagger} (\cdot) \rangle + \langle {\bf f}, \bpi_{VW^\perp}{\bf S}_\mu^{\dagger} (\cdot) \rangle. 
    \end{equation*}
Since 
\begin{equation*}
\begin{split}
     U_\mu^*(\omega(\cdot) - \langle {\bf f}, \bpi_{VW^\perp}{\bf S}_\mu^{\dagger} (\cdot) \rangle) &= \int_{W} {\bf x} \omega({\bf x})  d\mu({\bf x})  - \int_{W} {\bf x} \langle {\bf S}_\mu^{\dagger} \bpi_{WV^\perp}{\bf f},{\bf x} \rangle  d\mu({\bf x}) \\
         &= \bpi_{WV^\perp}{\bf f}-{\bf S}_\mu {\bf S}_\mu^{\dagger}\bpi_{WV^\perp}{\bf f}= \bpi_{WV^\perp}{\bf f}-\bpi_{WV^\perp}{\bf f}=\mathbf{0},
\end{split}
\end{equation*}
where the equality follows from \cref{lemma:frame_projection} that ${\bf S}_\mu {\bf S}_\mu^{\dagger}$ is the orthogonal projection of $\mathbb{R}^n$ onto $W$. 
Therefore, $\omega(\cdot) - \langle {\bf f}, \bpi_{VW^\perp}{\bf S}_\mu^{\dagger}(\cdot) \rangle \in \kernel( U_\mu^*) = (\range(U_\mu))^\perp$.

Since $\langle {\bf f}, \bpi_{VW^\perp}{\bf S}_\mu^{\dagger} (\cdot) \rangle = \langle {\bf S}_\mu^{\dagger} \bpi_{WV^\perp}{\bf f}, \cdot \rangle \in \range(U_\mu)$, it follows that $\omega(\cdot) - \langle {\bf f},\bpi_{VW^\perp}{\bf S}_\mu^{\dagger} (\cdot) \rangle  $ is orthogonal to $\langle {\bf f}, \bpi_{VW^\perp}{\bf S}_\mu^{\dagger}(\cdot) \rangle $ in $L^2(\mu, W)$. Then, by the Pythagorean theorem, 
\begin{equation*}
        \Vert \omega \Vert_{L^2(\mu, W)}^2 = \Vert \omega - \langle {\bf f}, \bpi_{VW^\perp}{\bf S}_\mu^{\dagger}(\cdot) \rangle \Vert_{L^2(\mu, W)}^2 + \Vert \langle {\bf f}, \bpi_{VW^\perp}{\bf S}_\mu^{\dagger}(\cdot) \rangle \Vert_{L^2(\mu, W)}^2.
    \end{equation*}
In other words, 
\begin{equation*}
       \int_{W} |\omega({\bf x})|^2 d\mu({\bf x}) =  \int_{W} |\langle {\bf f}, \bpi_{VW^\perp}{\bf S}_\mu^{\dagger}{\bf x} \rangle|^2  d\mu({\bf x}) + \int_{W} \vert \omega({\bf x})-\langle {\bf f}, \bpi_{VW^\perp}{\bf S}_\mu^{\dagger}{\bf x} \rangle \vert^2 d\mu({\bf x}).
    \end{equation*}
 Therefore, 
    \begin{equation*}
       \int_{W} |\omega({\bf x})|^2 d\mu({\bf x}) \geq  \int_{W} |\langle {\bf f}, \bpi_{VW^\perp}{\bf S}_\mu^{\dagger}{\bf x} \rangle|^2  d\mu({\bf x}) ,
    \end{equation*}
and equality holds if and only if
    \begin{equation*}
       \int_{W} \vert \omega({\bf x})-\langle {\bf f}, \bpi_{VW^\perp}{\bf S}_\mu^{\dagger}{\bf x} \rangle \vert^2 d\mu({\bf x}) = 0,
    \end{equation*}
  which is true if and only if 
$\omega ({\bf x}) = \langle {\bf f}, \bpi_{VW^\perp}{\bf S}_\mu^{\dagger} {\bf x} \rangle$ for $\mu$-almost all ${\bf x} \in W$.
\end{proof}

When $W=V$, \cref{ObliqueDualEnergy} tells us that for a probabilistic frame $\mu$ on $W$ and ${\bf f} \in \mathbb{R}^n$, among all $\omega \in L^2(\mu, W)$ for which $\bP_{W}{\bf f} = \int_{W} {\bf x} \omega({\bf x}) d\mu({\bf x})$,
the function $\langle {\bf f}, {\bf S}_\mu^{\dagger} (\cdot) \rangle$ induced by the canonical dual probabilistic frame has the minimum $L^2(\mu, W)$ energy. This recovers the related result \cite[Proposition~3.4]{chen2025probabilistic}. 

We now give the following definition for the oblique dual probabilistic frame potential. 
\begin{definition}
 Let $\mu \in \mathcal{P}_2(W)$ be a probabilistic frame for $W$ and $\nu \in \mathcal{P}_2(V)$ an oblique dual frame of $\mu$ on $V$. Then the \emph{oblique dual probabilistic frame potential} between $\mu$ and $\nu$ is defined as
    \begin{equation*}
         \int_{W}  \int_{V}  |\langle {\bf x}, {\bf y} \rangle|^2 d\mu({\bf x}) d\nu({\bf y}).
    \end{equation*}
\end{definition}

Note that if $T_\# \mu$ is an oblique dual of $\mu$ on $V$ where $T: W \rightarrow V$ is measurable, then the oblique dual probabilistic frame potential between $\mu$ and $T_\# \mu$ is given by
 \begin{equation*}
        \int_{W}  \int_{W}  |\langle {\bf x}, T({\bf y}) \rangle|^{2} d\mu({\bf x}) d\mu({\bf y}).
    \end{equation*}
    
Given a probabilistic frame $\mu$ for $W$, we are going to prove that its oblique dual probabilistic frame potential is minimized among all oblique duals $T_\# \mu$ of pushforward type if and only if $T({\bf y}) =  \bpi_{VW^\perp}{\bf S}_\mu^{\dagger}{\bf y}$ for $\mu$-almost all ${\bf y} \in W$; that is, when $T_\# \mu$ is the canonical oblique dual $(\bpi_{VW^\perp}{\bf S}_\mu^{\dagger})_\#\mu$ up to a $\mu$-null set. This generalizes similar results in \cref{lemma:oblique_dual_potential} about the oblique dual frame potential, in \cite[Theorem~2.2]{christensen2020equiangular} about the dual frame potential, and in \cite[Theorem~3.5]{chen2025probabilistic} about the probabilistic dual frame potential.

\begin{theorem}\label{DualFramePotential}
Let $W$ and $V$ be subspaces such that $\mathbb{R}^n = W \oplus V^\perp$ where $W$ has dimension $d_W$. Suppose $\mu \in \mathcal{P}_2(W)$ is a probabilistic frame for $W$ and  $T_{\#}\mu$ is an oblique dual frame of $\mu$ on $V$ where $T: W \rightarrow V$ is measurable. Then 
    \begin{equation*}
             \int_{W}  \int_{W}  |\langle {\bf x}, T({\bf y}) \rangle|^{2} d\mu({\bf x}) d\mu({\bf y}) \geq d_W,
    \end{equation*}
and equality holds if and only if $T({\bf y}) =  \bpi_{VW^\perp}{\bf S}_\mu^{\dagger}{\bf y}$ for $\mu$-almost all ${\bf y} \in W$.
\end{theorem}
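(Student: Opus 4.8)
The plan is to deduce the theorem from the pointwise energy identity of \cref{ObliqueDualEnergy} by integrating against $\mu$. First I would fix $\mathbf f \in \mathbb{R}^n$ and set $\omega_{\mathbf f}(\mathbf x) := \langle \mathbf f, T(\mathbf x)\rangle$; this lies in $L^2(\mu,W)$ because $\int_W |\omega_{\mathbf f}|^2\,d\mu \le \|\mathbf f\|^2 M_2(T_\#\mu) < \infty$, and since $T_\#\mu$ is an oblique dual of $\mu$ via the coupling $(\mathbf{Id},T)_\#\mu$,
\[
\int_W \mathbf x\,\omega_{\mathbf f}(\mathbf x)\,d\mu(\mathbf x) = \Big(\int_W \mathbf x\,T(\mathbf x)^t\,d\mu(\mathbf x)\Big)\mathbf f = \bpi_{WV^\perp}\mathbf f .
\]
Thus \cref{ObliqueDualEnergy} applies and gives, for every $\mathbf f$,
\[
\int_W |\langle \mathbf f, T(\mathbf x)\rangle|^2\,d\mu(\mathbf x) \ \ge\ \int_W |\langle \mathbf f, \bpi_{VW^\perp}\mathbf S_\mu^{\dagger}\mathbf x\rangle|^2\,d\mu(\mathbf x),
\]
with equality if and only if $\langle \mathbf f, T(\mathbf x)\rangle = \langle \mathbf f, \bpi_{VW^\perp}\mathbf S_\mu^{\dagger}\mathbf x\rangle$ for $\mu$-a.e.\ $\mathbf x$.

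Next I would integrate this inequality over $\mathbf f$ against $\mu$. By Tonelli the left-hand side becomes the oblique dual probabilistic frame potential (after relabeling $\mathbf x \leftrightarrow \mathbf y$). For the right-hand side, writing $\mathbf A := \bpi_{VW^\perp}\mathbf S_\mu^{\dagger}$ and using $\langle \mathbf y, \mathbf A\mathbf x\rangle^2 = \mathbf y^t\mathbf A\mathbf x\,\mathbf x^t\mathbf A^t\mathbf y$, two applications of Fubini collapse it to $\trace(\mathbf A\,\mathbf S_\mu\,\mathbf A^t\,\mathbf S_\mu)$. Since $\mathbf S_\mu$ is symmetric, $\mathbf A^t = \mathbf S_\mu^{\dagger}\bpi_{WV^\perp}$, and then the Moore--Penrose identity $\mathbf S_\mu^{\dagger}\mathbf S_\mu\mathbf S_\mu^{\dagger}=\mathbf S_\mu^{\dagger}$, the equality $\bpi_{WV^\perp}\mathbf S_\mu = \mathbf S_\mu$ (as $\range(\mathbf S_\mu)=W$, \cref{lemma:frame_projection}), $\mathbf S_\mu^{\dagger}\mathbf S_\mu = \bP_W$, and $\bpi_{VW^\perp}\bP_W = \bpi_{VW^\perp}$ (\cref{lemma:oblique_proj_identity}) reduce the trace to $\trace(\bpi_{VW^\perp}) = d_V$; and $d_V = d_W$ since $\mathbb{R}^n = W\oplus V^\perp = V\oplus W^\perp$ (\cref{lem:directsumangle}). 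This yields the bound $\ge d_W$, and the ``if'' direction of the equality clause follows by running the same trace computation with $T$ replaced by $\mathbf A$ (since $T(\mathbf y)=\mathbf A\mathbf y$ $\mu$-a.e.\ gives exactly the integrand above).

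For the ``only if'' direction, equality in the potential forces equality of the integrated inequalities, so (the integrand in $\mathbf y$ being pointwise $\ge$) we get $\int_W |\langle \mathbf y, T(\mathbf x)\rangle|^2\,d\mu(\mathbf x) = \int_W |\langle \mathbf y, \mathbf A\mathbf x\rangle|^2\,d\mu(\mathbf x)$ for $\mu$-a.e.\ $\mathbf y$; the equality clause of \cref{ObliqueDualEnergy} then says that for $\mu$-a.e.\ $\mathbf y$ we have $\langle \mathbf y,\, T(\mathbf x) - \mathbf A\mathbf x\rangle = 0$ for $\mu$-a.e.\ $\mathbf x$. Put $g(\mathbf x) := T(\mathbf x) - \mathbf A\mathbf x \in V$. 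The nonnegative measurable function $(\mathbf x,\mathbf y)\mapsto \langle \mathbf y, g(\mathbf x)\rangle^2$ has vanishing $\mu\otimes\mu$ double integral, so by Tonelli in the other order, for $\mu$-a.e.\ $\mathbf x$ we have $\langle \mathbf y, g(\mathbf x)\rangle = 0$ for $\mu$-a.e.\ $\mathbf y$; as $\{\mathbf y : \langle \mathbf y, g(\mathbf x)\rangle = 0\}$ is closed of full $\mu$-measure it contains $\supp(\mu)$, whence $g(\mathbf x)\perp \spanning\{\supp(\mu)\} = W$ (\cref{TAcharacterization}), i.e.\ $g(\mathbf x) \in V\cap W^\perp = \{\mathbf 0\}$ (\cref{lem:directsumangle}). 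Therefore $T(\mathbf x) = \bpi_{VW^\perp}\mathbf S_\mu^{\dagger}\mathbf x$ for $\mu$-a.e.\ $\mathbf x$, as claimed.

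I expect the main obstacle to be precisely this last step: \cref{ObliqueDualEnergy} only produces, for each good $\mathbf y$, an $\mathbf x$-null set \emph{depending on $\mathbf y$} off which $\langle\mathbf y,g(\mathbf x)\rangle=0$, and one must upgrade this ``for a.e.\ $\mathbf y$, a.e.\ $\mathbf x$'' statement to the genuinely pointwise conclusion $g = 0$ $\mu$-a.e. The Tonelli swap together with $\spanning\{\supp(\mu)\} = W$ and $V\cap W^\perp = \{\mathbf 0\}$ is exactly what closes this gap, and it is where the hypotheses that $\mu$ is a frame for $W$ and that $\mathbb{R}^n = W\oplus V^\perp$ are used; the trace identity, by contrast, is routine.
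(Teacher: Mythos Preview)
Your proposal is correct and follows essentially the same route as the paper: apply \cref{ObliqueDualEnergy} pointwise, integrate against $\mu$, evaluate the resulting lower bound, and for the equality case pass from the ``for a.e.\ $\mathbf y$, a.e.\ $\mathbf x$'' conclusion to $T=\bpi_{VW^\perp}\mathbf S_\mu^{\dagger}$ $\mu$-a.e.\ via $V\cap W^\perp=\{\mathbf 0\}$. The only cosmetic differences are that the paper evaluates the lower bound by rewriting $\langle \mathbf x,\mathbf S_\mu^{\dagger}\mathbf y\rangle=\langle(\mathbf S_\mu^{\dagger})^{1/2}\mathbf x,(\mathbf S_\mu^{\dagger})^{1/2}\mathbf y\rangle$ and invoking the canonical Parseval frame rather than your direct trace reduction to $\trace(\bpi_{VW^\perp})$, and in the equality step it uses the lower frame bound of $\mu$ in place of your support/Tonelli argument; both variants are equivalent in content.
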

\begin{proof}
Since $T_{\#}\mu$ is an oblique dual frame to $\mu$, for any fixed ${\bf x} \in \mathbb{R}^n$ we have
    \begin{equation*}
       \bpi_{WV^\perp} {\bf x} = \int_{W} {\bf y} \langle {\bf x}, T({\bf y}) \rangle  d\mu({\bf y}).
    \end{equation*}
    Note that $\langle {\bf x}, T(\cdot) \rangle \in L^2(\mu, W)$, since
    \begin{equation*}
        \int_{W} \vert \langle {\bf x}, T({\bf y}) \rangle \vert^2  d\mu({\bf y}) \leq \Vert {\bf x} \Vert^2    \int_{W} \Vert T({\bf y}) \Vert^2  d\mu({\bf y}) < +\infty. 
    \end{equation*}
    Then, by \cref{ObliqueDualEnergy}, we know that for any fixed ${\bf x} \in \mathbb{R}^n$,
    \begin{equation}\label{DualEnergy2}
         \int_{W}  |\langle {\bf x}, T({\bf y}) \rangle|^2  d\mu({\bf y}) 
          \geq  \int_{W}  |\langle {\bf x}, \bpi_{VW^\perp}{\bf S}_\mu^{\dagger}{\bf y} \rangle|^2  d\mu({\bf y}),
    \end{equation}
and equality holds if and only if $\langle {\bf x}, T({\bf y}) \rangle = \langle {\bf x}, \bpi_{VW^\perp}{\bf S}_\mu^{\dagger}{\bf y} \rangle$ for $\mu$-almost all ${\bf y} \in W$.
Therefore,
 \begin{align}
         \int_{W} \int_{W}  |\langle {\bf x}, T({\bf y}) \rangle|^2   d\mu({\bf y}) d\mu({\bf x}) 
       &\geq \int_{W} \int_{W}  |\langle {\bf x}, \bpi_{VW^\perp}{\bf S}_\mu^{\dagger}{\bf y} \rangle|^2  d\mu({\bf y}) d\mu({\bf x})\label{eqn:lowerbound} \\
       &=  \int_{W} \int_{W}  |\langle {\bf x}, {\bf S}_\mu^{\dagger}{\bf y} \rangle|^2 d\mu({\bf x}) d\mu({\bf y})\nonumber \\
        &=  \int_{W} \int_{W}  |\langle ({\bf S}_\mu^\dagger)^{\frac{1}{2}}{\bf x}, ({\bf S}_\mu^{\dagger})^{\frac{1}{2}}{\bf y} \rangle|^2 d\mu({\bf x}) d\mu({\bf y})\nonumber \\
       &= \int_{W} \Vert ({\bf S}_\mu^\dagger)^{\frac{1}{2}}{\bf y} \Vert^2 d\mu({\bf y}) \label{eqn:lowerbound2}\\
       & = d_W,\label{eqn:lowerbound3}
     \end{align}
where~\eqref{eqn:lowerbound2} is due to the fact that ${({\bf S}_\mu^\dagger)^{\frac{1}{2}}}_\#\mu$ is a Parseval frame for $W=\range(({\bf S}_\mu^\dagger)^{\frac{1}{2}}) $. \cref{lemma:frame_projection} yields ~\eqref{eqn:lowerbound3}, since
\begin{equation*}
\begin{split}
    \int_{W} \Vert ({\bf S}_\mu^\dagger)^{\frac{1}{2}}{\bf y} \Vert^2 d\mu({\bf y}) = \trace \left(({\bf S}_\mu^\dagger)^{\frac{1}{2}} {\bf S}_\mu ({\bf S}_\mu^\dagger)^{\frac{1}{2}}\right) = \trace(\bP_W) = d_W.
\end{split}
\end{equation*}

If $T({\bf y}) =  \bpi_{VW^\perp}{\bf S}_\mu^{\dagger}{\bf y}$ for $\mu$-almost all ${\bf y} \in W$, then the inequality in \eqref{eqn:lowerbound} is an equality. Conversely, if \eqref{eqn:lowerbound} is an equality, then for $\mu$-almost all ${\bf x} \in W$, 
 \begin{equation*}
       \int_{W}  |\langle {\bf x}, T({\bf y}) \rangle|^2  d\mu({\bf y}) = \int_{W}  |\langle {\bf x}, \bpi_{VW^\perp}{\bf S}_\mu^{\dagger}{\bf y} \rangle|^2  d\mu({\bf y}).
\end{equation*}
Hence, for $\mu$-almost every ${\bf x} \in W$ we have equality in  \eqref{DualEnergy2}, and hence
$$\langle {\bf x}, \bP_W(T({\bf y}) - \bpi_{VW^\perp}{\bf S}_\mu^{\dagger}{\bf y}) \rangle =0, \ \text{for $\mu$-almost all ${\bf y} \in W$}. $$

In other words, we have shown that equality in the statement of the theorem implies that $\langle {\bf x}, \bP_W(T({\bf y}) - \bpi_{VW^\perp}{\bf S}_\mu^{\dagger}{\bf y}) \rangle =0$ for $\mu$-almost every ${\bf x}$ and ${\bf y}$ in $W$.
Then for $\mu$-almost all ${\bf y} \in W$, 
$$
A_\mu \|\bP_W(T({\bf y}) - \bpi_{VW^\perp}{\bf S}_\mu^{\dagger}{\bf y}) \|^2 \leq \int_W |\langle {\bf x}, \bP_W(T({\bf y}) - \bpi_{VW^\perp}{\bf S}_\mu^{\dagger}{\bf y}) \rangle|^2 d\mu({\bf x}) = 0 ,
$$
where $A_\mu>0$ is the lower frame bound for $\mu \in \mathcal{P}_2(W)$.
Therefore, for $\mu$-almost all ${\bf y} \in W$, $\bP_W(T({\bf y}) - \bpi_{VW^\perp}{\bf S}_\mu^{\dagger}{\bf y}) = {\bf 0}$. Since $\mathbb{R}^n = W \oplus V^\perp$ is equivalent to $\mathbb{R}^n = V \oplus W^\perp$ (see \cref{lem:directsumangle}), it follows that $T({\bf y}) - \bpi_{VW^\perp}{\bf S}_\mu^{\dagger}{\bf y} \in W^\perp \cap V  = \{\bf 0\}$ for $\mu$-almost all ${\bf y} \in W$. Thus, for $\mu$-almost all ${\bf y} \in W$, $T({\bf y}) = \bpi_{VW^\perp}{\bf S}_\mu^{\dagger}{\bf y}$. 
\end{proof}

The following lemma is a special case of a more general result about approximately dual probabilistic frames~\cite[Lemma 4.2]{chen2025redundancy}. 

\begin{lemma}\label{DualIsFrame}
Let $\mu$ be a probabilistic frame for $W$ with upper frame bound $B>0$.  If $\nu \in \mathcal{P}_2(W)$ is a dual probabilistic frame of $\mu$, then $\nu$ is a probabilistic frame for $W$ with frame bounds $\frac{1}{B}$ and $M_2(\nu)$. 
\end{lemma}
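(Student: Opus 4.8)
The plan is to read off the two frame bounds for $\nu$ directly from the defining identity $\int_{W\times W}\mathbf{x}\mathbf{y}^t\,d\gamma(\mathbf{x},\mathbf{y})=\bP_W$, where $\gamma\in\Gamma(\mu,\nu)$ is the coupling witnessing that $\nu$ is a dual probabilistic frame of $\mu$; the only tool needed is the Cauchy--Schwarz inequality.

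First I would dispatch the upper bound. Since $\nu\in\mathcal{P}_2(W)$ we have $M_2(\nu)<\infty$, and for every $\mathbf{x}\in W$ the Cauchy--Schwarz inequality gives
\[
\int_W|\langle \mathbf{x},\mathbf{y}\rangle|^2\,d\nu(\mathbf{y})\le\|\mathbf{x}\|^2\int_W\|\mathbf{y}\|^2\,d\nu(\mathbf{y})=M_2(\nu)\,\|\mathbf{x}\|^2,
\]
so $\nu$ is automatically a Bessel probability measure for $W$ with bound $M_2(\nu)$.

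For the lower bound, fix $\mathbf{f}\in W$ with $\mathbf{f}\neq\mathbf{0}$. Evaluating the defining identity at $\mathbf{f}$, taking the inner product of both sides with $\mathbf{f}$, and using $\bP_W\mathbf{f}=\mathbf{f}$ yields $\|\mathbf{f}\|^2=\int_{W\times W}\langle\mathbf{x},\mathbf{f}\rangle\langle\mathbf{y},\mathbf{f}\rangle\,d\gamma(\mathbf{x},\mathbf{y})$, the integrand being $\gamma$-integrable because $|\langle\mathbf{x},\mathbf{f}\rangle\langle\mathbf{y},\mathbf{f}\rangle|\le\tfrac{1}{2}\big(|\langle\mathbf{x},\mathbf{f}\rangle|^2+|\langle\mathbf{y},\mathbf{f}\rangle|^2\big)$ and both marginals $\mu={\pi_x}_\#\gamma$, $\nu={\pi_y}_\#\gamma$ have finite second moments. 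I would then apply Cauchy--Schwarz on $L^2(\gamma)$ and push the marginals through:
\[
\|\mathbf{f}\|^2\le\Big(\int_W|\langle\mathbf{x},\mathbf{f}\rangle|^2\,d\mu(\mathbf{x})\Big)^{1/2}\Big(\int_W|\langle\mathbf{y},\mathbf{f}\rangle|^2\,d\nu(\mathbf{y})\Big)^{1/2}\le\big(B\|\mathbf{f}\|^2\big)^{1/2}\Big(\int_W|\langle\mathbf{y},\mathbf{f}\rangle|^2\,d\nu(\mathbf{y})\Big)^{1/2},
\]
using that $B$ is an upper frame bound for $\mu$. Squaring and cancelling $\|\mathbf{f}\|^2>0$ gives $\tfrac{1}{B}\|\mathbf{f}\|^2\le\int_W|\langle\mathbf{y},\mathbf{f}\rangle|^2\,d\nu(\mathbf{y})$, which holds trivially when $\mathbf{f}=\mathbf{0}$ as well. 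Combined with the upper bound, this shows $\nu$ is a probabilistic frame for $W$ with bounds $\tfrac{1}{B}$ and $M_2(\nu)$.

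I do not expect a genuine obstacle: the only points deserving a sentence of care are the $\gamma$-integrability of the mixed term $\langle\mathbf{x},\mathbf{f}\rangle\langle\mathbf{y},\mathbf{f}\rangle$ (handled by $2ab\le a^2+b^2$) and the finiteness of $M_2(\nu)$ (guaranteed by $\nu\in\mathcal{P}_2(W)$). As a shortcut, one could instead simply invoke the $V=W$ case of \cref{lemma:obliqueDualEquiv}: its condition $(1)$ is exactly the definition of a dual probabilistic frame, and its final assertion yields the lower bound $\tfrac{1}{B}$, with the upper bound $M_2(\nu)$ coming from the Bessel estimate above.
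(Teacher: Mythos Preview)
Your proof is correct. The paper does not actually prove this lemma: it is stated without proof and attributed as a special case of \cite[Lemma~4.2]{chen2025redundancy}. Your direct Cauchy--Schwarz argument is exactly the computation the paper carries out in the proof of \cref{lemma:obliqueDualEquiv} (with $V=W$), and indeed your final remark about invoking that lemma is the cleanest route given what is already in the paper.
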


 We are now ready to prove  \cref{dualframepotenial2}, which concerns the oblique dual probabilistic frame potential. For a given probabilistic frame for $W$ with bounds $0<A \leq B$, we show that the oblique dual probabilistic frame potential is bounded below by $d_W \frac{A}{B}$. Since $A \leq B$, this lower bound satisfies
$$
d_W \frac{A}{B} \leq d_W,
$$
where $d_W$ is precisely the lower bound of the oblique dual probabilistic frame potential among all oblique duals of pushforward type in \cref{DualFramePotential}.  This is because there exist oblique dual frames of non-pushforward type (see \cref{ex:non_pushforward}), and the minimization of the oblique dual probabilistic frame potential in \cref{dualframepotenial2} is among a larger set compared to the case in \cref{DualFramePotential}. However,  the effect of oblique dual frames of non-pushforward type becomes negligible when the given probabilistic frame is tight ($A=B$), since these two lower bounds are both $d_W$. 

\begin{theorem}\label{dualframepotenial2}
Let $\mathbb{R}^n = W \oplus V^\perp$. Suppose $\mu \in \mathcal{P}_2(W)$ is a probabilistic frame for $W$ with bounds $0<A \leq B$, and  $\nu \in \mathcal{P}_2(V)$ is an oblique dual of $\mu$ on $V$. Then
\begin{equation*}
    \int_{W}  \int_{V} |\langle {\bf x}, {\bf y} \rangle |^2 d\mu({\bf x}) d\nu({\bf y}) \geq \frac{A}{B}d_W,
\end{equation*}
and equality holds if and only if $\mu$ is a tight probabilistic frame for $W$ with bound $A>0$ and $\nu = ({\bpi_{VW^\perp}{\bf S}^{\dagger}_\mu})_\# \mu$. 
\end{theorem}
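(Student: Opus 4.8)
The plan is to rewrite the potential as a trace, bound it from below by applying the lower frame bound of $\mu$ on $W$ and then the lower frame bound of the projected dual ${\bP_W}_\#\nu$, and finally trace through both estimates to read off the equality conditions. First I would use Fubini to get
\[
\int_{W}\int_{V}|\langle\mathbf{x},\mathbf{y}\rangle|^2\,d\mu(\mathbf{x})\,d\nu(\mathbf{y})=\int_{V}\mathbf{y}^t\mathbf{S}_\mu\mathbf{y}\,d\nu(\mathbf{y}).
\]
By \cref{lemma:frame_projection}, $\range(\mathbf{S}_\mu)=W$ and $\kernel(\mathbf{S}_\mu)=W^\perp$, so $\mathbf{y}^t\mathbf{S}_\mu\mathbf{y}=(\bP_W\mathbf{y})^t\mathbf{S}_\mu(\bP_W\mathbf{y})\ge A\|\bP_W\mathbf{y}\|^2$, $A$ being a lower frame bound for $\mu$ on $W$. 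Integrating, the potential is at least $A\int_V\|\bP_W\mathbf{y}\|^2\,d\nu(\mathbf{y})=A\,M_2({\bP_W}_\#\nu)=A\,\trace(\mathbf{S}_{{\bP_W}_\#\nu})$. By \cref{lemma:obliqueDualEquiv}, ${\bP_W}_\#\nu$ is a dual probabilistic frame of $\mu$ on $W$, so \cref{DualIsFrame} (applicable since $\mu$ has upper bound $B$) shows it is a probabilistic frame for $W$ with lower bound $1/B$; hence $\mathbf{S}_{{\bP_W}_\#\nu}\succeq\tfrac1B\bP_W$ and $\trace(\mathbf{S}_{{\bP_W}_\#\nu})\ge d_W/B$. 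Chaining the two estimates gives the bound $\tfrac AB d_W$.

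For the equality case, assume equality. Then $\trace(\mathbf{S}_{{\bP_W}_\#\nu})=d_W/B$, and since every eigenvalue of $\mathbf{S}_{{\bP_W}_\#\nu}$ restricted to $W$ is at least $1/B$, this forces $\mathbf{S}_{{\bP_W}_\#\nu}=\tfrac1B\bP_W$. Choosing $\gamma\in\Gamma(\mu,\nu)$ with $\bpi_{WV^\perp}=\int\mathbf{x}\mathbf{y}^t\,d\gamma$ and setting $\tilde\gamma:=(\mathbf{Id},\bP_W)_\#\gamma\in\Gamma(\mu,{\bP_W}_\#\nu)$, I have $\int_{W\times W}\mathbf{x}\mathbf{z}^t\,d\tilde\gamma=\bP_W$, so taking traces and applying Cauchy--Schwarz,
\[
d_W=\int_{W\times W}\langle\mathbf{x},\mathbf{z}\rangle\,d\tilde\gamma\le\bigl(\trace\mathbf{S}_\mu\bigr)^{1/2}\bigl(\trace\mathbf{S}_{{\bP_W}_\#\nu}\bigr)^{1/2}=\bigl(\trace\mathbf{S}_\mu\bigr)^{1/2}\bigl(d_W/B\bigr)^{1/2},
\]
whence $\trace\mathbf{S}_\mu\ge Bd_W$; together with $\mathbf{S}_\mu\preceq B\bP_W$ this gives $\mathbf{S}_\mu=B\bP_W$ and also equality in Cauchy--Schwarz. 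The rigidity of Cauchy--Schwarz then yields $\mathbf{x}=c\,\bP_W\mathbf{y}$ for $\gamma$-a.e.\ $(\mathbf{x},\mathbf{y})$ with a constant $c>0$; substituting into $\int_{W\times W}\mathbf{x}\mathbf{z}^t\,d\tilde\gamma=\bP_W$ forces $c\,\mathbf{S}_{{\bP_W}_\#\nu}=\bP_W$, so $c=B$ and $\bP_W\mathbf{y}=\tfrac1B\mathbf{x}=\mathbf{S}_\mu^\dagger\mathbf{x}$ for $\gamma$-a.e.\ $(\mathbf{x},\mathbf{y})$.

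Since $\mathbb{R}^n=W\oplus V^\perp$ is equivalent to $\mathbb{R}^n=V\oplus W^\perp$ (\cref{lem:directsumangle}), an element of $V$ is uniquely determined by its $\bP_W$-component (and that component is realized by $\bpi_{VW^\perp}$, using \cref{lemma:oblique_proj_identity}), so $\mathbf{y}=\bpi_{VW^\perp}\mathbf{S}_\mu^\dagger\mathbf{x}$ for $\gamma$-a.e.\ $(\mathbf{x},\mathbf{y})$, i.e.\ $\nu=(\bpi_{VW^\perp}\mathbf{S}_\mu^\dagger)_\#\mu$. Moreover, equality in the first estimate means $\mathbf{y}^t\mathbf{S}_\mu\mathbf{y}=A\|\bP_W\mathbf{y}\|^2$ for $\nu$-a.e.\ $\mathbf{y}$; with $\mathbf{S}_\mu=B\bP_W$ this is $(B-A)\|\bP_W\mathbf{y}\|^2=0$ for $\nu$-a.e.\ $\mathbf{y}$, and because $\spanning(\supp\nu)=V$ (as $\nu$ is a probabilistic frame for $V$ by \cref{lemma:obliqueDualEquiv}) while $V\cap W^\perp=\{\mathbf{0}\}$, we have $\bP_W\mathbf{y}\ne\mathbf{0}$ on a set of positive $\nu$-measure, so $A=B$ and $\mu$ is tight with bound $A$. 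The converse is a direct check: when $\mathbf{S}_\mu=A\bP_W$ and $\nu=(\bpi_{VW^\perp}\mathbf{S}_\mu^\dagger)_\#\mu$, writing $\mathbf{y}=\tfrac1A\bpi_{VW^\perp}\mathbf{x}$ gives $\bP_W\mathbf{y}=\tfrac1A\mathbf{x}$, so the potential equals $A\int_W\tfrac1{A^2}\|\mathbf{x}\|^2\,d\mu(\mathbf{x})=\tfrac1A M_2(\mu)=d_W=\tfrac AB d_W$.

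The inequality itself is routine. I expect the delicate part to be the equality analysis: knowing only that ${\bP_W}_\#\nu$ has the canonical frame operator $\mathbf{S}_\mu^\dagger$ does not by itself identify the measure $\nu$ on $V$, and the passage from that operator-level information to $\nu=(\bpi_{VW^\perp}\mathbf{S}_\mu^\dagger)_\#\mu$ is where the Cauchy--Schwarz rigidity on the coupling and the injectivity of $\bP_W$ on $V$ (equivalently $\bpi_{VW^\perp}\bP_W=\bpi_{VW^\perp}$ together with $V\cap W^\perp=\{\mathbf{0}\}$, i.e.\ a second use of the direct-sum hypothesis) must be combined carefully.
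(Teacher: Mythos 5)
Your proof is correct and follows essentially the same route as the paper: the same reduction of the potential to $\trace(\mathbf{S}_\mu\mathbf{S}_\nu)$, the same two chained lower bounds (first $A\int_V\|\bP_W\mathbf{y}\|^2\,d\nu(\mathbf{y})$ via the frame bounds of $\mu$, then $d_W/B$ via the dual-frame property of ${\bP_W}_\#\nu$ from \cref{lemma:obliqueDualEquiv} and \cref{DualIsFrame}), and the same Cauchy--Schwarz rigidity on the glued coupling to force $\bP_W\mathbf{y}=\mathbf{S}_\mu^\dagger\mathbf{x}$ $\gamma$-a.e.\ and hence $\nu=(\bpi_{VW^\perp}\mathbf{S}_\mu^\dagger)_\#\mu$. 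The only real difference is how tightness is extracted: the paper gets $\mathbf{S}_\mu=A\bP_W$ directly from $\trace((\mathbf{S}_\mu-A\bP_W)\mathbf{S}_\nu)=0$ together with $\kernel(\mathbf{S}_\nu)=V^\perp$ and $W\cap V^\perp=\{\mathbf{0}\}$, whereas you first obtain $\mathbf{S}_\mu=B\bP_W$ from a trace Cauchy--Schwarz and then $A=B$ from the pointwise equality $\mathbf{y}^t\mathbf{S}_\mu\mathbf{y}=A\|\bP_W\mathbf{y}\|^2$ $\nu$-a.e.; both variants are valid.
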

\begin{proof}
Note that
\begin{equation*}
    \begin{split}
       \int_{W}  \int_{V} |\langle {\bf x}, {\bf y} \rangle |^2 d\mu({\bf x}) d\nu({\bf y}) = \int_{W}  \int_{V} \trace({\bf x} {\bf x}^t {\bf y} {\bf y}^t)d\mu({\bf x}) d\nu({\bf y}) =\trace({\bf S}_\mu{\bf S}_\nu).
    \end{split}
\end{equation*}
Since $\mu$ is a probabilistic frame for $W$ with bounds $0<A \leq B$, then 
\begin{equation*}
    A \bP_W \preceq {\bf S}_\mu \preceq B \bP_W,
\end{equation*}
where $\bP_W$ is the orthogonal projection of $\mathbb{R}^n$ onto $W$. 
Since ${\bf S}_\mu - A\bP_W$ and ${\bf S}_\nu$ are both positive semi-definite,
\begin{align}
    \trace({\bf S}_\mu {\bf S}_\nu - A \bP_W {\bf S}_\nu) &= \trace(({\bf S}_\mu - A\bP_W){\bf S}_\nu)\nonumber\\ 
    &= \|{\bf S}_\nu^{1/2} ({\bf S}_\mu - A\bP_W)^{1/2}\|^2_{\text{Frob}} \geq 0,\label{eq:trace inequality}
\end{align}
and therefore
\begin{align} 
        \trace({\bf S}_\mu{\bf S}_\nu) &\geq A \trace(\bP_W{\bf S}_\nu) = A \trace(\bP_W{\bf S}_\nu \bP_W)\label{eqn:S_mu_Trace}\\
        &= A \int_{V} \|\bP_W {\bf y}\|^2 d\nu({\bf y}).\nonumber
\end{align}
 Since $\nu$ is an oblique dual frame of $\mu$ on $V$, by \cref{lemma:obliqueDualEquiv} and  \cref{DualIsFrame}, ${\bP_W}_\#\nu$ is a dual probabilistic frame of $\mu$ with respect to some $\gamma \in \Gamma(\mu, {\bP_W}_\#\nu)$ on $W$, with the lower frame bound $\frac{1}{B}$. Now let $\{{\bf e}_i\}_{i=1}^{d_W}$ be an orthonormal basis for $W$. Then
\begin{equation}\label{eqn:ONB_In_W}
   \int_{V} \|\bP_W {\bf y}\|^2 d\nu({\bf y}) 
   =  \sum_{i=1}^{d_W}\int_{W} |\langle {\bf e}_i, {\bf y} \rangle|^2 d {\bP_W}_\#\nu({\bf y}) \geq \sum_{i=1}^{d_W} \frac{1}{B} \Vert{\bf e}_i \Vert^2 =  \frac{1}{B}d_W.
\end{equation}
Combining the above results, we have 
\begin{equation*}
    \begin{split}
       \int_{W}  \int_{V} |\langle {\bf x}, {\bf y} \rangle |^2 d\mu({\bf x}) d\nu({\bf y}) \geq A \int_{V} \|\bP_W {\bf y}\|^2 d\nu({\bf y})  \geq \frac{A}{B}d_W.
    \end{split}
\end{equation*}

For the equality, if $\mu$ is a tight frame for $W$ with frame bound $A$ and $\nu = {\bpi_{VW^\perp}{\bf S}^{\dagger}_\mu}_\# \mu$, then ${\bf S}_\mu = A \bP_W$ and ${\bf S}^{\dagger}_\mu = \frac{1}{A}\bP_W$. Then by \cref{lemma:oblique_proj_identity}, 
$${\bP_W}_\#\nu = (\bP_W \bpi_{VW^\perp}{\bf S}^{\dagger}_\mu)_\# \mu = (\frac{1}{A}\bP_W)_\#\mu.$$
Thus, ${\bP_W}_\#\nu$ is a tight frame for $W$ with bound $\frac{1}{A}$ since $${\bf S}_{{\bP_W}_\#\nu} = \frac{1}{A}\bP_W {\bf S}_\mu \frac{1}{A}\bP_W  = \frac{1}{A}\bP_W.$$ 
Hence, the inequalities in \eqref{eqn:S_mu_Trace} and \eqref{eqn:ONB_In_W} become equalities, and thus
\begin{equation*}
        \int_{W}  \int_{V} |\langle {\bf x}, {\bf y} \rangle |^2 d\mu({\bf x}) d\nu({\bf y}) = A \trace(\bP_W{\bf S}_\nu) = A  \sum_{i=1}^{d_W} \frac{1}{A} \Vert{\bf e}_i \Vert^2 =  d_W.
\end{equation*}

Conversely, if the equality holds, we must have equalities in  \eqref{eq:trace inequality} and \eqref{eqn:ONB_In_W}:
\begin{equation}\label{eqn:equality_Theorem_5_5}
   \|{\bf S}_\nu^{1/2} ({\bf S}_\mu - A\bP_W)^{1/2}\|^2_{\text{Frob}} = 0 \quad \text{and} \quad \int_{V} \|\bP_W {\bf y}\|^2 d\nu({\bf y}) = \frac{1}{B}d_W.
\end{equation}
The first implies that ${\bf S}_\nu^{1/2}({\bf S}_\mu- A\bP_W)^{1/2} = \mathbf{0}_{n \times n}$ and hence
$${\bf S}_\nu({\bf S}_\mu- A\bP_W) = {\bf S}_\nu^{1/2} {\bf S}_\nu^{1/2} ({\bf S}_\mu- A\bP_W)^{1/2}({\bf S}_\mu- A\bP_W)^{1/2}  = \mathbf{0}_{n \times n}.$$

Therefore, $\range({\bf S}_\mu- A\bP_W) \subset \kernel({\bf S}_\nu)$. Since $\nu$ is a probabilistic frame for $V$, \cref{lemma:frame_projection} implies $\kernel({\bf S}_\nu) = V^\perp$ and thus $\range({\bf S}_\mu- A\bP_W) \subset V^\perp$. Since $\range({\bf S}_\mu- A\bP_W) \subset W$ and $W \cap V^\perp = \{\mathbf{0}\}$, we conclude $\range({\bf S}_\mu- A\bP_W) = \{{\mathbf{0}}\}$. So  ${\bf S}_\mu= A\bP_W $, which shows that $\mu$ is a tight frame for $W$ with bound $A>0$. 

Moreover, since ${\bP_W}_\#\nu$ is a dual probabilistic frame of $\mu$ on $W$ with respect to $\gamma \in \Gamma(\mu, {\bP_W}_\#\nu)$, then
\begin{equation*}
    \bP_{W} = \int_{W \times W} {\bf x}{\bf y}^t d\gamma({\bf x, y}).
\end{equation*}
Taking the trace of both sides and using the Cauchy–Schwarz inequality twice, we have
\begin{align}
      d_W = \trace(\bP_{W}) &= \int_{W \times W} \langle {\bf x, y} \rangle d\gamma({\bf x, y}) \nonumber\\
       &\leq \int_{W \times W} \| {\bf x} \| \| {\bf  y} \| d\gamma({\bf x, y}) \label{eqn:csa}\\
      & \leq \sqrt{\int_{W}  \Vert {\bf x}  \Vert ^2  d\mu({\bf x}) \int_{W}  \Vert {\bf y}  \Vert ^2  d{\bP_W}_\#\nu({\bf y})}.\label{eqn:csb}
 \end{align}
Since $\mu$ is a tight frame for $W$ with bound $A=B$, then 
\begin{equation*}
    \int_{W}  \Vert {\bf x}  \Vert ^2  d\mu({\bf x}) = \sum_{i=1}^{d_W} \int_{\mathbb{W}}   |\langle {\bf e}_i, {\bf x} \rangle|^2  d\mu({\bf x}) = \sum_{i=1}^{d_W} A \Vert{\bf e}_i \Vert^2 =  A \ d_W.
\end{equation*}
Therefore, 
\begin{equation*}
   \int_{V}  \Vert {\bP_W\bf y}  \Vert ^2  d\nu({\bf y}) \geq  \frac{1}{A}d_W = \frac{1}{B}d_W. 
\end{equation*}
Combining this with the second equation in \eqref{eqn:equality_Theorem_5_5}, we know that the equalities in the above Cauchy–Schwarz inequalities (i.e., \eqref{eqn:csa} and \eqref{eqn:csb}) must hold. 

Saturation of~\eqref{eqn:csb} implies that the functions $
F({\bf x,y}) := \|{\bf x}\|$ and $ 
G({\bf x,y}) := \|{\bf y}\| $
are linearly dependent in $L^2(\gamma, W \times W)$, i.e.,  there exists a constant $c \geq 0$ such that for $\gamma$-almost all $ ({\bf x,y}) \in W \times W$, $\|{\bf y}\| = c \|{\bf x}\|$. 
Furthermore, saturation of~\eqref{eqn:csa} shows that for $\gamma$-almost all $({\bf x,y}) \in W \times W$, ${\bf y} = c_{\bf x} {\bf x}$,
where $c_{\bf x} \geq 0$ is a constant that may depend on  ${\bf x}$. Therefore, for $\gamma$-almost all $ ({\bf x,y}) \in W \times W$, 
$$\|{\bf y}\| = c \|{\bf x}\| \ \text{and} \  {\bf y} = c_{\bf x}{\bf x}.$$ 
If $\bf x$ is nonzero, then $c_{\bf x}=c$, and if $\bf x =0$, and setting $c_{\bf 0} :=c$ does not affect the validity of the above two equations. Therefore, for $\gamma$-almost all $ ({\bf x,y}) \in W \times W$, ${\bf y} = c {\bf x} = c\bP_W {\bf x},$
which implies $\gamma = (\mathbf{Id}, c\bP_W)_\# \mu \in \Gamma(\mu, {\bP_W}_\# \nu)$. Thus ${\bP_W}_\# \nu = (c\bP_W)_\# \mu$. Since  ${\bP_W}_\#\nu$ is a dual frame of $\mu$ on $W$ with respect to $\gamma = (\mathbf{Id}, c\bP_W)_\# \mu$, 
$$
\bP_W = \int_{W \times W} {\bf x}{\bf y}^t \, d\gamma({\bf x,y})
= \int_{W} {\bf x} (c{\bf x})^t \, d\mu({\bf x})
= c {\bf S}_\mu = c A \bP_W.
$$
Thus, $c = \frac{1}{A}>0$ and ${\bP_W}_\# \nu = \left(\tfrac{1}{A} \bP_W \right)_\# \mu$.
Finally, \cref{lemma:oblique_proj_identity} and the fact that $\nu$ is supported in $V$ give
$$\nu ={\bpi_{VW^\perp}}_\#\nu = ({\bpi_{VW^\perp}\bP_W})_\#\nu = ({\frac{1}{A}\bpi_{VW^\perp}\bP_W})_\#\mu = ({\bpi_{VW^\perp}{\bf S}^{\dagger}_\mu})_\# \mu,$$ 
where the last equality is due to ${\bf S}^\dagger_\mu =\frac{1}{A}\bP_W$.
\end{proof}

When $W=V$, the oblique dual probabilistic frame is just a dual probabilistic frame. Therefore, \cref{dualframepotenial2} tells us that for a given probabilistic frame $\mu$ on $W$, its dual probabilistic frame potential is minimized if and only if $\mu$ is a tight frame and the dual frame is the canonical dual, which recovers the related result of \cite[Theorem 3.7]{chen2025probabilistic}.

\section{Oblique Approximately Dual Probabilistic Frame}\label{section:approximateDual}
We introduce the notion of \emph{oblique approximately dual probabilistic frame} in this section. Previous work on approximate dual frames and approximately dual probabilistic frames can be found in \cite{christensen2010approximately} and \cite{chen2025redundancy}, respectively. Throughout this section, $\mathbb{R}^n = W \oplus V^\perp$ where $W$ and $V$ are subspaces of $\mathbb{R}^n$. 

\begin{definition}
Let $\mu \in \mathcal{P}_2(W)$ and $\nu \in \mathcal{P}_2(V)$ be probabilistic frames for $W$ and $V$, respectively. If $\epsilon \geq 0$, then  $\nu$ is called an \textit{oblique $\epsilon$-approximately dual probabilistic frame} of $\mu$ on $V$ if there exists $\gamma \in \Gamma(\mu, \nu)$ such that
    \begin{equation*}
       \left \| \int_{W \times V} \mathbf{x} \mathbf{y}^t d\gamma(\mathbf{x}, \mathbf{y}) - \bpi_{WV^\perp} \right \| \leq \epsilon,
    \end{equation*}
where the above matrix norm is the operator norm induced by the Euclidean norm. 
\end{definition}
Clearly, when $\epsilon = 0$, the oblique $\epsilon$-approximately dual probabilistic frame is an oblique dual frame.
We can also generalize the probabilistic consistent reconstruction for oblique approximately dual frames.

\begin{definition}
  Let $\mu$ and $\nu$ be probabilistic frames for $W$ and $V$, respectively. Given $\epsilon \geq 0$, $\mu$ and $\nu$ are said to perform \emph{probabilistic $\epsilon$-consistent reconstruction} if there exists $\gamma \in \Gamma(\mu, \nu)$ such that for any $\mathbf{f} \in \mathbb{R}^n$,  
  $$
  \|\langle \mathbf{f} - \hat{\mathbf{f}},\cdot \rangle\|_{L^2(\nu, V)} = \left (\int_{V} |\langle \mathbf{f} - \hat{\mathbf{f}}, \mathbf{z} \rangle|^2 d\nu(\mathbf{z}) \right)^{1/2} \leq \epsilon \|\mathbf{f}\|,
  $$
  where $\hat{\mathbf{f}} = \int_{W \times V} \mathbf{x} \langle \mathbf{y, f} \rangle d\gamma(\mathbf{x}, \mathbf{y}) \in W$ is the reconstructed signal. 
\end{definition}
Note that when $\epsilon =0$, $\|\langle \mathbf{f} - \hat{\mathbf{f}},\cdot \rangle\|_{L^2(\nu, V)}=0$, and thus $\langle \mathbf{f}, \mathbf{z}  \rangle = \langle \hat{\mathbf{f}},  \mathbf{z} \rangle$  for $\nu$-almost all $\mathbf{z} \in V$, which implies that $\mu$ and $\nu$ perform probabilistic consistent reconstruction. We also establish the following lemma on the relation between the oblique $\epsilon$-approximately dual frame and probabilistic $\epsilon$-consistent reconstruction. 
\begin{proposition}
    Let $\mu$ and $\nu$ be probabilistic frames for $W$ and $V$, respectively.
    \begin{itemize}
        \item[$(1)$] If $\nu$ is an oblique $\epsilon$-approximate dual of $\mu$ on $V$, then $\mu$ and $\nu$ do probabilistic $\sqrt{B}\epsilon$-consistent reconstruction where $B>0$ is the upper bound for $\nu$.

        \item[$(2)$] Conversely, if $\mu$ and $\nu$ do probabilistic $\alpha$-consistent reconstruction, then $\nu$ is an oblique $\epsilon$-approximately dual of $\mu$ on $V$, where $\epsilon = \alpha \sqrt{M_2({\bpi_{WV^\perp}{\bf S}_\nu^{\dagger}}_\#\nu)} $. 
    \end{itemize}
\end{proposition}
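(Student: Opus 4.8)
The plan is to funnel both implications through the matrix $\mathbf{F}:=\int_{W\times V}\mathbf{x}\mathbf{y}^t\,d\gamma(\mathbf{x},\mathbf{y})$ attached to the coupling $\gamma\in\Gamma(\mu,\nu)$ in question; this is well-defined since $\gamma\in\mathcal{P}_2(W\times V)$, and it satisfies $\hat{\mathbf{f}}=\mathbf{F}\mathbf{f}$ with $\range(\mathbf{F})\subseteq W$. The single identity doing all the work is that for every $\mathbf{f}\in\mathbb{R}^n$ and every $\mathbf{z}\in V$,
\[
\langle\mathbf{f}-\hat{\mathbf{f}},\mathbf{z}\rangle=\langle(\bpi_{WV^\perp}-\mathbf{F})\mathbf{f},\mathbf{z}\rangle,
\]
which follows from $\bpi_{WV^\perp}=(\bpi_{VW^\perp})^*$ and $\bpi_{VW^\perp}\mathbf{z}=\mathbf{z}$ for $\mathbf{z}\in V$, giving $\langle\bpi_{WV^\perp}\mathbf{f},\mathbf{z}\rangle=\langle\mathbf{f},\mathbf{z}\rangle$.

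For $(1)$, I would fix $\mathbf{f}$, set $\mathbf{g}:=(\bpi_{WV^\perp}-\mathbf{F})\mathbf{f}$, and use that $\nu$ is supported on $V$ to replace $\mathbf{g}$ by $\bP_V\mathbf{g}\in V$ inside the inner product; the upper frame bound $B$ of $\nu$ on $V$ then yields $\int_V|\langle\mathbf{f}-\hat{\mathbf{f}},\mathbf{z}\rangle|^2\,d\nu(\mathbf{z})=\int_V|\langle\bP_V\mathbf{g},\mathbf{z}\rangle|^2\,d\nu(\mathbf{z})\le B\|\bP_V\mathbf{g}\|^2\le B\|\mathbf{g}\|^2\le B\epsilon^2\|\mathbf{f}\|^2$, and taking square roots gives probabilistic $\sqrt{B}\epsilon$-consistent reconstruction.

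For $(2)$, I would fix $\mathbf{f}$ and set $\mathbf{r}:=(\mathbf{F}-\bpi_{WV^\perp})\mathbf{f}=\hat{\mathbf{f}}-\bpi_{WV^\perp}\mathbf{f}\in W$, aiming for $\|\mathbf{r}\|\le\epsilon\|\mathbf{f}\|$. The crux is to reconstruct $\mathbf{r}$ through the canonical oblique dual of $\nu$ applied to $\mathbf{r}$ itself: since $\mathbf{r}\in W$, \cref{lemma:oblique_proj_identity} (or the direct check $\bpi_{WV^\perp}\bP_V=\bpi_{WV^\perp}$) gives $\bpi_{WV^\perp}\bP_V\mathbf{r}=\mathbf{r}$, and \cref{lemma:frame_projection} gives ${\bf S}_\nu^{\dagger}{\bf S}_\nu=\bP_V$, hence $\mathbf{r}=\bpi_{WV^\perp}{\bf S}_\nu^{\dagger}{\bf S}_\nu\mathbf{r}=\int_V\bpi_{WV^\perp}{\bf S}_\nu^{\dagger}\mathbf{z}\,\langle\mathbf{z},\mathbf{r}\rangle\,d\nu(\mathbf{z})$. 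Pairing with $\mathbf{r}$ and applying Cauchy–Schwarz pointwise and then in $L^2(\nu,V)$ gives $\|\mathbf{r}\|^2\le\|\mathbf{r}\|\sqrt{M_2\big((\bpi_{WV^\perp}{\bf S}_\nu^{\dagger})_\#\nu\big)}\,\big(\int_V|\langle\mathbf{z},\mathbf{r}\rangle|^2\,d\nu(\mathbf{z})\big)^{1/2}$; since $\langle\mathbf{z},\mathbf{r}\rangle=-\langle\mathbf{z},\mathbf{f}-\hat{\mathbf{f}}\rangle$ for $\nu$-a.e.\ $\mathbf{z}$, the last factor is $\le\alpha\|\mathbf{f}\|$, so dividing by $\|\mathbf{r}\|$ (trivial if $\mathbf{r}=\mathbf{0}$) yields $\|(\mathbf{F}-\bpi_{WV^\perp})\mathbf{f}\|\le\alpha\sqrt{M_2((\bpi_{WV^\perp}{\bf S}_\nu^{\dagger})_\#\nu)}\,\|\mathbf{f}\|$, whence the operator-norm bound $\epsilon=\alpha\sqrt{M_2((\bpi_{WV^\perp}{\bf S}_\nu^{\dagger})_\#\nu)}$.

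I expect the only genuine obstacle to be in $(2)$: recognizing that $\mathbf{r}$ must be routed through the canonical oblique dual $(\bpi_{WV^\perp}{\bf S}_\nu^{\dagger})_\#\nu$ of $\nu$, and checking that this is legitimate, i.e.\ that $\mathbf{r}\in W$ so $\bpi_{WV^\perp}\bP_V\mathbf{r}=\mathbf{r}$, and that $M_2((\bpi_{WV^\perp}{\bf S}_\nu^{\dagger})_\#\nu)<\infty$ because $\nu\in\mathcal{P}_2(V)$ and $\bpi_{WV^\perp}{\bf S}_\nu^{\dagger}$ is bounded. Everything else is bookkeeping with the adjoint identity $\bpi_{WV^\perp}=(\bpi_{VW^\perp})^*$ and the Bessel bounds.
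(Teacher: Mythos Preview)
Your proposal is correct and follows essentially the same route as the paper. In both parts you and the paper funnel everything through the identity $\langle\mathbf{f}-\hat{\mathbf{f}},\mathbf{z}\rangle=\langle\bpi_{WV^\perp}\mathbf{f}-\hat{\mathbf{f}},\mathbf{z}\rangle$ for $\mathbf{z}\in V$, and in $(2)$ both reconstruct $\mathbf{r}=\hat{\mathbf{f}}-\bpi_{WV^\perp}\mathbf{f}$ via the canonical oblique dual $(\bpi_{WV^\perp}{\bf S}_\nu^{\dagger})_\#\nu$; the only cosmetic difference is that the paper bounds $\|\mathbf{r}\|^2=\big\|\int_V\bpi_{WV^\perp}{\bf S}_\nu^{\dagger}\mathbf{z}\,\langle\mathbf{z},\hat{\mathbf{f}}-\mathbf{f}\rangle\,d\nu(\mathbf{z})\big\|^2$ directly with the vector-valued Cauchy--Schwarz inequality, whereas you pair with $\mathbf{r}$ first and then divide.
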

\begin{proof}
If $\nu$ is an oblique $\epsilon$-approximately dual frame of $\mu$ on $V$, then there exists $\gamma \in \Gamma(\mu, \nu)$ such that 
 \begin{equation*}
      \left \| \int_{W \times V} \mathbf{x} \mathbf{y}^t d\gamma(\mathbf{x}, \mathbf{y}) - \bpi_{WV^\perp} \right \| \leq \epsilon.
    \end{equation*}
For any $\mathbf{f} \in \mathbb{R}^n$, let $\hat{\mathbf{f}} = \int_{W \times V} \mathbf{x} \langle \mathbf{y, f} \rangle d\gamma(\mathbf{x}, \mathbf{y}) \in W$.
Then  $\|\hat{\mathbf{f}} - \bpi_{WV^\perp} \mathbf{f}\| \leq \epsilon \|\mathbf{f}\|$. Therefore, 
    \begin{equation*}
    \begin{split}
        \int_{V} |\langle \mathbf{f} - \hat{\mathbf{f}}, \mathbf{z} \rangle|^2 d\nu(\mathbf{z}) 
        =  \int_{V} |\langle \mathbf{f} - \hat{\mathbf{f}}, \bpi_{VW^\perp}\mathbf{z} \rangle|^2 d\nu(\mathbf{z}) 
       & = \int_{V} |\langle \bpi_{WV^\perp} \mathbf{f} - \hat{\mathbf{f}}, \mathbf{z} \rangle|^2 d\nu(\mathbf{z}) \\
        & \leq B \|\hat{\mathbf{f}} - \bpi_{WV^\perp} \mathbf{f}\|^2 \leq B \epsilon^2 \|\mathbf{f}\|^2,
    \end{split}
    \end{equation*}
which implies that $\mu$ and $\nu$ perform probabilistic $\sqrt{B}\epsilon$-consistent reconstruction. 

Conversely, if $\mu$ and $\nu$ perform probabilistic $\alpha$-consistent reconstruction, then there exists $\gamma' \in \Gamma(\mu, \nu)$ such that for any $\mathbf{f} \in \mathbb{R}^n$, 
$$
  \int_{V} |\langle \mathbf{f} - \hat{\mathbf{f}}, \mathbf{z} \rangle|^2 d\nu(\mathbf{z}) \leq \alpha^2 \|\mathbf{f}\|^2,
  $$
  where $\hat{\mathbf{f}} = \int_{W \times V} \mathbf{x} \langle \mathbf{y, f} \rangle d\gamma'(\mathbf{x}, \mathbf{y})  \in W$. 
Since $ ({\bpi_{WV^\perp}{\bf S}_\nu^{\dagger}})_\#\nu$ is the canonical oblique dual of $\nu$ on $W$ with respect to $\tilde{\gamma} = (\mathbf{Id}, \bpi_{WV^\perp}{\bf S}_\nu^{\dagger})_\#\nu \in \Gamma(\nu, ({\bpi_{WV^\perp}{\bf S}_\nu^{\dagger}})_\#\nu)$, then 
\begin{equation*}
    \int_{V \times W} \mathbf{z} \mathbf{u}^t d\tilde{\gamma}(\mathbf{z}, \mathbf{u}) = \bpi_{VW^\perp} = \bpi_{WV^\perp}^*.
\end{equation*}
Thus, for any $\mathbf{f} \in \mathbb{R}^n$, 
\begin{equation*}
    \begin{split}
        \left \|\int_{W \times V} \mathbf{x} \langle \mathbf{y, f} \rangle d\gamma'(\mathbf{x}, \mathbf{y}) - \bpi_{WV^\perp} \mathbf{f} \right\|^2 &=  \|\bpi_{WV^\perp}(\hat{\mathbf{f}} - \mathbf{f})\|^2 =  \left \| \int_{V \times W} \mathbf{u} \langle \mathbf{z}, \hat{\mathbf{f}} - \mathbf{f} \rangle d\tilde{\gamma}(\mathbf{z}, \mathbf{u}) \right \|^2\\
        &= \left \| \int_{V} \bpi_{WV^\perp}{\bf S}_\nu^{\dagger} \mathbf{z} \ \langle \mathbf{z}, \hat{\mathbf{f}} - \mathbf{f} \rangle d\nu(\mathbf{z}) \right \|^2 \\
        & \leq \int_{V} \|\bpi_{WV^\perp}{\bf S}_\nu^{\dagger} \mathbf{z}\|^2 d \nu(\mathbf{z}) \ \int_{V} |\langle \mathbf{f} - \hat{\mathbf{f}}, \mathbf{z} \rangle|^2 d\nu(\mathbf{z}) \\
        &\leq M_2({\bpi_{WV^\perp}{\bf S}_\nu^{\dagger}}_\#\nu) \ \alpha^2 
        \ \|\mathbf{f}\|^2 = \epsilon^2 \|\mathbf{f}\|^2, 
    \end{split}
\end{equation*}
where $\epsilon = \alpha \sqrt{M_2({\bpi_{WV^\perp}{\bf S}_\nu^{\dagger}}_\#\nu)} $. Then $\nu$ is an oblique $\epsilon$-approximately dual of $\mu$ on $V$ with respect to $\gamma' \in \Gamma(\mu, \nu)$. 
\end{proof}

In the remaining part of this section, we consider the oblique approximately dual frames of perturbed probabilistic frames. We claim that if a probability measure is close to one probabilistic frame in some oblique dual pair, then this probability measure  is an oblique $\epsilon$-approximate dual to the other probabilistic frame in the oblique dual pair where $0<\epsilon <1$, as explained in the following diagram.
\begin{center}
  \begin{tikzcd}[row sep =huge, column sep=huge]
  \nu \in \mathcal{P}_2(V) \arrow[leftrightarrow, r, "\text{oblique dual}"]  & \mu \in \mathcal{P}_2(W) \arrow[leftrightarrow, ld, "\text{oblique $\epsilon$-approximate dual}"] \\
 \eta \in \mathcal{P}_2(V) \arrow[u,leftrightarrow, "\text{close}" ] 
 \end{tikzcd}
\end{center}

In particular, since probabilistic frames can be arbitrarily well approximated by atomic measures with finitely many atoms (e.g., by sampling), and since such measures can be identified with classical frames, this implies that every probabilistic frame has an oblique $\epsilon$-approximate dual which is a classical frame.

Note that the set of probabilistic frames on $V$ is open in $\mathcal{P}_2(V)$ under the $2$-Wasserstein topology, meaning that the frame property is invariant under small perturbations.

\begin{proposition}[{\cite[Proposition 1.2, Corollary 1.3]{chen2023paley}}]\label{prop:quadratic closeness}
    Let $\nu \in \mathcal{P}_2(V)$ be a probabilistic frame for the subspace $V$ with lower  bound $A > 0$. If there exist $\eta \in \mathcal{P}_2(V)$  and $\gamma \in \Gamma(\nu, \eta) $ so that 
$$\lambda:=  \int_{V \times V} \| {\bf y}- {\bf z} \|^2 d\gamma({\bf y},{\bf z})  < A,$$
then $\eta$ is  a probabilistic frame for $V$ with frame bounds $(\sqrt{A}-\sqrt{\lambda})^2 $ and $M_2(\eta)$. Furthermore, if $W_2(\nu, \eta) < \sqrt{A}$,
then $\eta$ is a probabilistic frame for $V$ with bounds $(\sqrt{A}-W_2(\nu, \eta))^2 $ and $M_2(\eta)$.
\end{proposition}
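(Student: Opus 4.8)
The plan is to establish the two frame bounds for $\eta$ separately and directly from the definition: the upper bound is essentially free, and the lower bound is obtained by transferring the frame inequality from $\nu$ to $\eta$ through the coupling $\gamma$ via a reverse-triangle-inequality estimate in $L^2(\gamma, V\times V)$. The Wasserstein statement then follows by specializing to an optimal transport plan.

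First I would handle the upper bound, which requires nothing about frames: for any $\mathbf{x}\in V$, Cauchy--Schwarz gives $\int_V |\langle \mathbf{x},\mathbf{z}\rangle|^2\,d\eta(\mathbf{z}) \le \|\mathbf{x}\|^2 \int_V \|\mathbf{z}\|^2\,d\eta(\mathbf{z}) = M_2(\eta)\|\mathbf{x}\|^2$, which is finite since $\eta\in\mathcal{P}_2(V)$. For the lower bound, fix $\mathbf{x}\in V$ and work in $L^2(\gamma,V\times V)$. Since $\gamma\in\Gamma(\nu,\eta)$, the marginal conditions give $\int_{V\times V}|\langle\mathbf{x},\mathbf{z}\rangle|^2\,d\gamma = \int_V |\langle\mathbf{x},\mathbf{z}\rangle|^2\,d\eta(\mathbf{z})$ and $\int_{V\times V}|\langle\mathbf{x},\mathbf{y}\rangle|^2\,d\gamma = \int_V |\langle\mathbf{x},\mathbf{y}\rangle|^2\,d\nu(\mathbf{y}) \ge A\|\mathbf{x}\|^2$, where the last step uses that $\nu$ is a probabilistic frame for $V$ with lower bound $A$ and that $\mathbf{x}\in V$. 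Applying the reverse triangle inequality in $L^2(\gamma)$ to the functions $(\mathbf{y},\mathbf{z})\mapsto\langle\mathbf{x},\mathbf{z}\rangle$ and $(\mathbf{y},\mathbf{z})\mapsto\langle\mathbf{x},\mathbf{y}\rangle$, together with $|\langle\mathbf{x},\mathbf{y}-\mathbf{z}\rangle|\le\|\mathbf{x}\|\,\|\mathbf{y}-\mathbf{z}\|$, yields
\[
\left(\int_V |\langle\mathbf{x},\mathbf{z}\rangle|^2\,d\eta(\mathbf{z})\right)^{1/2}
\ \ge\ \sqrt{A}\,\|\mathbf{x}\| - \left(\int_{V\times V}\|\mathbf{y}-\mathbf{z}\|^2\,d\gamma\right)^{1/2}\|\mathbf{x}\|
\ =\ (\sqrt{A}-\sqrt{\lambda})\,\|\mathbf{x}\|.
\]
Since $\lambda < A$, the right-hand side is nonnegative, so squaring gives $\int_V |\langle\mathbf{x},\mathbf{z}\rangle|^2\,d\eta(\mathbf{z}) \ge (\sqrt{A}-\sqrt{\lambda})^2\|\mathbf{x}\|^2$ for all $\mathbf{x}\in V$; combined with the upper bound and $\eta\in\mathcal{P}_2(V)$, this shows $\eta$ is a probabilistic frame for $V$ with the claimed bounds.

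For the final assertion, I would invoke the existence of an optimal transport plan $\gamma^{\ast}\in\Gamma(\nu,\eta)$ attaining $\int_{V\times V}\|\mathbf{y}-\mathbf{z}\|^2\,d\gamma^{\ast} = W_2(\nu,\eta)^2$ (valid since $\nu,\eta\in\mathcal{P}_2(V)$), and then apply the first part with $\lambda = W_2(\nu,\eta)^2$: the hypothesis $W_2(\nu,\eta)<\sqrt{A}$ is exactly $\lambda<A$, and the resulting bounds are $(\sqrt{A}-W_2(\nu,\eta))^2$ and $M_2(\eta)$. I do not expect a serious obstacle here; the only points requiring a little care are the bookkeeping of the extension conventions (reading $\nu,\eta$ simultaneously as measures on $V$ and on $\mathbb{R}^n$, so that restricting $\langle\mathbf{x},\cdot\rangle$ to $\supp(\nu)\subseteq V$ is legitimate and the integrals over $V$ and $\mathbb{R}^n$ agree) and the appeal to the existence of a $W_2$-optimizer on $\mathcal{P}_2$, both of which are standard.
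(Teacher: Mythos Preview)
Your proof is correct. Note that the paper itself does not supply a proof of this proposition: it is quoted from \cite{chen2023paley} and used as a black box. Your argument---the reverse triangle inequality in $L^2(\gamma, V\times V)$ combined with the Cauchy--Schwarz bound $|\langle\mathbf{x},\mathbf{y}-\mathbf{z}\rangle|\le\|\mathbf{x}\|\,\|\mathbf{y}-\mathbf{z}\|$ to control the defect term, followed by specialization to an optimal coupling for the Wasserstein statement---is exactly the standard Paley--Wiener-type perturbation argument one would expect in the cited source, and all steps are justified.
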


Based on this result, we establish the following lemma.  

\begin{lemma}\label{pertubedFrame}
Suppose $\mu \in \mathcal{P}_2(W)$ is a probabilistic frame for $W$ with upper bound $C>0$, and $\nu  \in \mathcal{P}_2(V)$ is an oblique dual frame of $\mu$ on $V$ with lower bound $A>0$ and $AC\leq 1$.  If $0< \epsilon <1$ and there exist $\eta \in \mathcal{P}_2(V)$ and $\gamma \in \Gamma(\nu, \eta) $ such that
    \begin{equation*}
        \int_{V \times V} \| {\bf y}- {\bf z} \|^2 d\gamma({\bf y},{\bf z})  \leq A \epsilon^2,
    \end{equation*}
 then $\eta$ is an oblique $\epsilon$-approximately dual probabilistic frame of $\mu$ on $V$.
\end{lemma}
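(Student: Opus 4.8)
The plan is to construct the required coupling between $\mu$ and $\eta$ by gluing the coupling witnessing that $\nu$ is an oblique dual of $\mu$ together with the given near-diagonal coupling $\gamma\in\Gamma(\nu,\eta)$, and then to control the resulting error with a single Cauchy--Schwarz estimate. First I would check that the conclusion is even meaningful: since $\nu$ is a probabilistic frame for $V$ with lower bound $A>0$ and $\int_{V\times V}\|\mathbf{y}-\mathbf{z}\|^2\,d\gamma\le A\epsilon^2<A$ (using $\epsilon<1$), \cref{prop:quadratic closeness} shows that $\eta$ is itself a probabilistic frame for $V$, so the notion of an oblique $\epsilon$-approximately dual probabilistic frame applies to $\eta$.

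Because $\nu$ is an oblique dual of $\mu$ on $V$, there exists $\gamma_0\in\Gamma(\mu,\nu)$ with $\int_{W\times V}\mathbf{x}\mathbf{y}^t\,d\gamma_0(\mathbf{x},\mathbf{y})=\bpi_{WV^\perp}$. Applying the Gluing Lemma (\cref{gluinglemma}) with $W_1=W$, $W_2=W_3=V$, $\mu_1=\mu$, $\mu_2=\nu$, $\mu_3=\eta$, $\gamma^{12}=\gamma_0$, and $\gamma^{23}=\gamma$ produces $\gamma^{123}\in\mathcal{P}(W\times V\times V)$ with ${\pi_{xy}}_{\#}\gamma^{123}=\gamma_0$ and ${\pi_{yz}}_{\#}\gamma^{123}=\gamma$. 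Set $\gamma':={\pi_{xz}}_{\#}\gamma^{123}\in\Gamma(\mu,\eta)$. Then, since the $\mathbf{x}\mathbf{y}^t$ block of $\gamma^{123}$ reproduces $\gamma_0$,
\[
\int_{W\times V}\mathbf{x}\mathbf{z}^t\,d\gamma'(\mathbf{x},\mathbf{z})-\bpi_{WV^\perp}
=\int_{W\times V\times V}\mathbf{x}(\mathbf{z}-\mathbf{y})^t\,d\gamma^{123}(\mathbf{x},\mathbf{y},\mathbf{z}),
\]
all integrals converging absolutely because $\mu,\nu,\eta\in\mathcal{P}_2$.

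It remains to bound the operator norm of the right-hand matrix by $\epsilon$. For unit vectors $\mathbf{u},\mathbf{v}\in\mathbb{R}^n$ one has $\mathbf{u}^t\bigl(\int\mathbf{x}(\mathbf{z}-\mathbf{y})^t\,d\gamma^{123}\bigr)\mathbf{v}=\int\langle\mathbf{x},\mathbf{u}\rangle\langle\mathbf{z}-\mathbf{y},\mathbf{v}\rangle\,d\gamma^{123}$, and Cauchy--Schwarz in $L^2(\gamma^{123})$ bounds its absolute value by the product of $\bigl(\int_W|\langle\mathbf{x},\mathbf{u}\rangle|^2\,d\mu\bigr)^{1/2}\le\sqrt{C}$ (using the upper bound $C$ of $\mu$ together with $\langle\mathbf{x},\mathbf{u}\rangle=\langle\mathbf{x},\bP_W\mathbf{u}\rangle$ for $\mathbf{x}\in W$) and $\bigl(\int_{V\times V}\|\mathbf{z}-\mathbf{y}\|^2\,d\gamma\bigr)^{1/2}\le\sqrt{A}\,\epsilon$. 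Taking the supremum over $\mathbf{u},\mathbf{v}$ yields operator norm at most $\sqrt{AC}\,\epsilon\le\epsilon$, where the last inequality uses $AC\le1$. Hence $\eta$ is an oblique $\epsilon$-approximately dual probabilistic frame of $\mu$ on $V$ with respect to $\gamma'$.

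I do not expect a genuine obstacle: the argument is a gluing construction plus one Cauchy--Schwarz inequality. The only places needing attention are the marginal bookkeeping in the gluing step (so that $\gamma'\in\Gamma(\mu,\eta)$ and the $\mathbf{x}\mathbf{y}^t$ block of $\gamma^{123}$ is exactly $\bpi_{WV^\perp}$) and noticing that the hypothesis $AC\le1$ is precisely what absorbs the constant $\sqrt{AC}$ coming from the frame bound of $\mu$ and the transport bound between $\nu$ and $\eta$.
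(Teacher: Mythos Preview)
Your proof is correct and follows essentially the same approach as the paper: invoke \cref{prop:quadratic closeness} to ensure $\eta$ is a probabilistic frame, glue the oblique-dual coupling $\gamma_0\in\Gamma(\mu,\nu)$ with $\gamma\in\Gamma(\nu,\eta)$ via \cref{gluinglemma}, and then bound the resulting operator-norm error by a single Cauchy--Schwarz estimate yielding $\sqrt{AC}\,\epsilon\le\epsilon$. The only cosmetic difference is that the paper bounds $\|\!\int \mathbf{z}\langle\mathbf{x},\mathbf{f}\rangle\,d\widetilde\gamma-\bpi_{VW^\perp}\mathbf{f}\|$ vectorwise and then passes to the adjoint, whereas you test the matrix against unit vectors $\mathbf{u},\mathbf{v}$ directly.
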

\begin{proof}
By \cref{prop:quadratic closeness}, $\eta$ is a probabilistic frame for $V$. Since $\nu$ is an oblique dual probabilistic frame to $\mu$, then there exists  $\gamma' \in \Gamma(\mu, \nu)$ such that
     \begin{equation*}
      \int_{W \times V} {\bf y}{\bf x}^t d\gamma'({\bf x, y}) = \bpi_{VW^\perp} = \bpi_{WV^\perp}^*.
    \end{equation*}
By \cref{gluinglemma}, there exists $\Tilde{\pi} \in \mathcal{P}(W \times V \times V)$ such that ${\pi_{xy}}_{\#}\Tilde{\pi} = \gamma', \  {\pi_{yz}}_{\#}\Tilde{\pi} = \gamma$ where $\pi_{xy}$ and $\pi_{yz}$ are projections to $({\bf x},{\bf y})$ and $({\bf y},{\bf z})$ coordinates. Now let $\Tilde{\gamma}:= {\pi_{xz}}_{\#}\Tilde{\pi} \in \Gamma(\mu, \eta). $
Then, for any ${\bf f} \in \mathbb{R}^n$, 
 \begin{equation*}
 \begin{split}
     \left \| \int_{W \times V} {\bf z} \langle {\bf x}, {\bf f} \rangle d\Tilde{\gamma}({\bf x, z}) - \bpi_{VW^\perp}{\bf f} \right \|^2
      &=  \left \|\int_{W \times V} {\bf z} \langle {\bf x}, {\bf f} \rangle d\Tilde{\gamma}({\bf x, z}) - \int_{W \times V} {\bf y}\langle {\bf x}, {\bf f} \rangle  d\gamma'({\bf x, y}) \right \|^2 \\
      &= \left \|\int_{W \times V \times V} ({\bf z -y}) \langle {\bf x}, {\bf f} \rangle d\Tilde{\pi}({\bf x, y, z})\right \|^2 \\
      & \leq \int_{V \times V}  \| {\bf y -z} \|^2   d\gamma({\bf y, z})   \int_{W}  |\langle {\bf x}, {\bf f} \rangle|^2  d\mu({\bf x}) \\
      & \leq AC \epsilon^2 \| {\bf f}\|^2 \leq \epsilon^2 \| {\bf f}\|^2,
 \end{split}
    \end{equation*}
where the first inequality is due to the triangle inequality and the Cauchy–Schwarz inequality. Then
\begin{equation*}
   \Big \| \int_{W \times V} {\bf x} {\bf z}^t d\Tilde{\gamma}({\bf x, z}) - \bpi_{WV^\perp} \Big \| = \Big \| \int_{W \times V} {\bf z} {\bf x}^t d\Tilde{\gamma}({\bf x, z}) - \bpi_{VW^\perp} \Big \| \leq \epsilon,
\end{equation*}
and hence $\eta$ is an oblique $\epsilon$-approximate dual of $\mu$ on $V$ with respect to $\Tilde{\gamma} \in \Gamma(\mu, \eta)$.
\end{proof}

One may worry about the requirement that the lower frame bound \(A\) for \(\nu\) and the upper frame bound \(C\) for \(\mu\) satisfy 
\(AC \le 1\). However, \cref{lemma:obliqueDualEquiv} shows that if 
\(\mu\) has upper Bessel bound \(C>0\), then \(\nu\) can always be chosen 
with lower frame bound \(A = \frac{1}{C}\). In this case we obtain the 
equality \(AC = 1\). More generally, if \(\nu\) happens to have a smaller 
lower frame bound \(A \le \frac{1}{C}\), then \(AC \le 1\). 

Furthermore, if $\gamma \in \Gamma(\nu, \eta)$ in the above lemma is an optimal transport coupling for $W_2(\nu, \eta)$, we have the following corollary.
\begin{corollary} \label{interiorPoint}
Let $0< \epsilon <1$. Suppose $\mu \in \mathcal{P}_2(W)$ is a probabilistic frame for $W$ with upper bound $C>0$, and $\nu \in \mathcal{P}_2(V)$  is an oblique dual frame to $\mu$ on $V$ with lower bound $A>0$ and $AC\leq 1$.  If there exists $\eta \in \mathcal{P}_2(V)$ such that $W_2(\nu, \eta)  \leq \sqrt{A}\epsilon,$
 then $\eta$ is an oblique $\epsilon$-approximately dual frame of $\mu$ on $V$.
Consequently, for a given probabilistic frame on $W$, its oblique dual probabilistic frames on $V$ are interior points in the set of oblique $\epsilon$-approximately dual frames under the $2$-Wasserstein topology on $\mathcal{P}_2(V)$. 
\end{corollary}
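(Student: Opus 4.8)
The plan is to deduce both assertions from \cref{pertubedFrame}, the only extra ingredient being that the infimum defining $W_2(\nu,\eta)$ is attained. First I would pick an optimal transport coupling $\gamma \in \Gamma(\nu,\eta)$ for $W_2(\nu,\eta)$; such a coupling exists since $\nu,\eta\in\mathcal{P}_2(V)$ and the quadratic cost on $\mathbb{R}^n$ is lower semicontinuous. For this $\gamma$ we have $\int_{V\times V}\|\mathbf{y}-\mathbf{z}\|^2\,d\gamma(\mathbf{y},\mathbf{z}) = W_2(\nu,\eta)^2 \le (\sqrt{A}\,\epsilon)^2 = A\epsilon^2$, which is exactly the inequality required in \cref{pertubedFrame}. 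Since $0<\epsilon<1$, $\mu$ has upper bound $C>0$, and $\nu$ is an oblique dual of $\mu$ on $V$ with lower bound $A>0$ satisfying $AC\le 1$, \cref{pertubedFrame} applies verbatim and yields that $\eta$ is an oblique $\epsilon$-approximately dual probabilistic frame of $\mu$ on $V$. This proves the first assertion.

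For the ``consequently'' part I would fix a probabilistic frame $\mu$ for $W$ with upper Bessel bound $C>0$ and an arbitrary oblique dual probabilistic frame $\nu$ of $\mu$ on $V$. By \cref{lemma:obliqueDualEquiv}, $\nu$ is a probabilistic frame for $V$ with lower frame bound at least $1/C$, so I may take $A := 1/C$ as a (possibly non-optimal) lower frame bound for $\nu$; then $AC = 1 \le 1$ and we are in the setting of the first part. Consequently, every $\eta\in\mathcal{P}_2(V)$ with $W_2(\nu,\eta)\le \sqrt{A}\,\epsilon = \epsilon/\sqrt{C}$ is an oblique $\epsilon$-approximately dual probabilistic frame of $\mu$ on $V$; in particular such $\eta$ is itself a probabilistic frame for $V$, as \cref{pertubedFrame} guarantees via \cref{prop:quadratic closeness}. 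Hence the closed $W_2$-ball of positive radius $\epsilon/\sqrt{C}$ centered at $\nu$ is contained in the set of oblique $\epsilon$-approximately dual probabilistic frames of $\mu$ on $V$, so $\nu$ lies in the interior of that set in the $2$-Wasserstein topology on $\mathcal{P}_2(V)$.

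I do not expect a genuine obstacle: the corollary is essentially \cref{pertubedFrame} rephrased in terms of the Wasserstein metric. The two points that deserve a line of justification are (i) the existence of an optimal coupling realizing $W_2(\nu,\eta)$, which is standard for quadratic cost on $\mathbb{R}^n$, and (ii) the freedom to assume $AC\le 1$ when proving the interior-point statement, which is precisely the observation recorded in the remark after \cref{pertubedFrame} that the lower frame bound of an oblique dual of $\mu$ can always be taken to equal $1/C$.
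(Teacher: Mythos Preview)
Your proposal is correct and follows exactly the paper's approach: the paper presents this corollary as an immediate consequence of \cref{pertubedFrame} by choosing $\gamma$ to be an optimal transport coupling for $W_2(\nu,\eta)$, and your handling of the ``consequently'' clause via \cref{lemma:obliqueDualEquiv} (taking $A=1/C$) is precisely the observation recorded in the paragraph following \cref{pertubedFrame}.
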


\section*{Acknowledgment}
The authors would like to thank Kasso A. Okoudjou for valuable comments. This work was partially supported by the National Science Foundation (DMS–2107700).
\bibliographystyle{plain} 
\bibliography{refs}
\end{document}